\documentclass[a4paper,oneside,11pt]{article}
\usepackage{amsmath}
\usepackage{indentfirst}

\usepackage{amssymb}
\usepackage{abstract}
\usepackage{titlesec}
\usepackage{graphicx}
\usepackage{fancyhdr}
\usepackage{latexsym}
\usepackage{mathrsfs}
\usepackage{booktabs}
\usepackage{mathrsfs}
\usepackage{mathtools}
\usepackage{amsthm}
\usepackage{wasysym}
\usepackage{cite}
\usepackage{color}
\usepackage{todonotes}
\usepackage{geometry}
\numberwithin{equation}{section}
\newtheorem{definition}{Definition}[section]
\newtheorem{theorem}{Theorem}[section]
\newtheorem{proposition}{Proposition}[section]
\newtheorem{lemma}{Lemma}[section]
\newtheorem{remark}{Remark}[section]
\newenvironment{solution}{\begin{proof}[\bf Proof of Proposition \ref{p1}]}{\end{proof}}

\newcommand{\curl}{{\rm curl} }
\newcommand{\la}{\label}
\renewcommand{\div}{ {\rm div }  }

\newcommand{\na}{\nabla }
\newcommand{\bt}{\begin{theorem}}
\newcommand{\bl}{\begin{lemma}}
\newcommand{\el}{\end{lemma}}
\newcommand{\et}{\end{theorem}}

\newcommand{\bn}{\begin{eqnarray}}
\newcommand{\en}{\end{eqnarray}}
\newcommand{\bnn}{\begin{eqnarray*}}
\newcommand{\enn}{\end{eqnarray*}}

\newcommand{\bnnn}{\begin{eqnarray*}}
\newcommand{\ennn}{\end{eqnarray*}}
\newcommand{\ben}{\begin{enumerate}}
\newcommand{\een}{\end{enumerate}}
\newcommand{\ba}{\begin{aligned}}
\newcommand{\ea}{\end{aligned}}
\newcommand{\be}{\begin{equation}}
\newcommand{\ee}{\end{equation}}

\title{Local well-posedness of strong solutions to the  compressible Navier-Stokes equations with degenerate viscosities and far field vacuum in 3D exterior domains}
\author{Jiaxu L{\small I}$^{a}$, Boqiang L{\small \"U}$^{b}$, Bing Y{\small UAN}$^{b}$, \thanks{email: jiaxvlee@gmail.com (J.X. Li), lvbq86@163.com (B.Q.   L\"u),  bingyuan@email.ncu.edu.cn (B. Yuan)} \\
{\normalsize a. School of Mathematical Sciences,}\\
{\normalsize Shenzhen University, Shenzhen 518061, P. R. China;}\\
{\normalsize b. School of Mathematics and Computer Sciences}\\ {\normalsize \& Institute of Mathematics and Interdisciplinary Sciences,}\\ {\normalsize  Nanchang University, Nanchang 330031, P. R. China;}
}
\date{}

\begin{document}
\maketitle
\begin{abstract}
The isentropic compressible Navier-Stokes system subject to the Navier-slip boundary conditions is considered in a general three-dimensional exterior domain. For the density approaches far-field vacuum initially and the viscosities are power functions of the density ($\rho^\delta$ with $0<\delta<1$), the local well-posedness of strong solutions is established in this paper. In particular, the method we adopt can not only simultaneously handle the difficulties caused by boundary terms and far-field vacuum, but also make the selection of $\delta$ independent of the gas coefficient $\gamma$. 
%Moreover, we obtain a blowup criterion of the strong solution. 
\end{abstract}

\textbf{Keywords:} Compressible Navier-Stokes equations, Degenerate viscosity, Far-field vacuum, Exterior domain

\textbf{MSC 2020:} 76N06, 76N10, 35Q35

\section{Introduction}
We consider  the three-dimensional isentropic compressible Navier-Stokes equations as follows:
%in an exterior domain $\Omega\subset\mathbb{R}^3$:
\begin{equation}\label{1}
\left\{ \begin{array}{l}
\rho_t+\mathrm{div}(\rho u)=0,\\
(\rho u)_t+\mathrm{div}(\rho u\otimes u)+\nabla P=\mathrm{div}\mathbb{T}. \\
\end{array} \right.
\end{equation}
Here, $t\geq0$, $x=(x_1,x_2,x_3)\in\Omega\subset\mathbb{R}^3$ are time and space variables, respectively. $\rho=\rho(x,t)$, $u=(u_1(x,t),u_2(x,t),u_3(x,t))$, and $P(\rho)=a\rho^\gamma (a>0, \gamma>1)$ are the unknown fluid density, velocity, and pressure, respectively.
% The pressure $P$ is given by
%\begin{equation}
%P=a\rho^\gamma,\ (a>0, \gamma>1),
%\end{equation}
%where $a$ is an entropy constant and $\gamma$ is the adiabatic exponent.
$\mathbb{T}$ denotes the viscous stress tensor with the form
\begin{equation}
\mathbb{T}=2\mu(\rho)\mathcal{D}(u)+\lambda(\rho)\mathrm{div}u\mathbb{I}_3,
\end{equation}
where $\mathcal{D}(u)=\frac12\big[\nabla u+(\nabla u)^T\big]$ is the deformation tensor, $\mathbb{I}_3$ is the $3\times 3$ identity matrix. The shear viscosity $\mu(\rho)$ and the bulk one $\lambda(\rho)+\frac{2}{3}\mu(\rho)$ satisfy the following hypothesis
\begin{equation}\label{vis-c}
\mu(\rho)=\mu\rho^\delta,\ \lambda(\rho)=\lambda\rho^\delta,
\end{equation}
for some constant $\delta\geq 0$, $\mu$ and $\lambda$ are both constants satisfying
\begin{equation}\label{5}
\mu>0,\ 2\mu+3\lambda\geq0.
\end{equation}

Let $D$ be a simply connected bounded domain in $\mathbb{R}^3$ with smooth boundary, and $\Omega=\mathbb{R}^3\backslash\bar{D}$ be the exterior domain. In this paper, we study the initial-boundary value problem (IBVP) of $\eqref{1}$ in $\Omega$ with the initial condition
\begin{align}\label{6}
\rho(x,0)=\rho_0(x),\,\,\,\ u(x,0)=u_0(x), \,\,\,x\in \Omega,
\end{align}
and the Navier-type slip boundary condition (see \cite{Cai2023, Navier1827})
\begin{equation}\label{ch1}
u\cdot n=0, \,\,\, \mbox{curl}u\times n=-A(x)u,\,\,\, \mbox{on}\,\,\, \partial\Omega,
\end{equation}
%where $A = A(x)$ is a $3\times 3$ symmetric matrix defined on $\partial\Omega$.
where $n=(n^1,n^2,n^{3})$ is the unit normal vector to the boundary $\partial\Omega$ pointing outside $\Omega$,
$A$ is a $3\times3$ symmetric matrix defined on $\partial\Omega$.
Also, we consider the following far-field behavior
\begin{align}\label{7}
(\rho,u)\rightarrow(0,0),\,\,\,as\,\,\, |x|\rightarrow\infty.
\end{align}

There is a huge literature on the well-posedness of solutions for the multidimensional isentropic compressible Navier-Stokes system. For the case of constant viscosities ($\delta=0$ in \eqref{vis-c}), the local existence and uniqueness of  classical (strong)  solutions is  established in \cite{Nash1962,Serrin1959,Tani19774} (without vacuum)  and  \cite{3,4,5,13,Straskraba1993,Huang2020} (with vacuum).
The global classical solutions were first obtained by Matsumura-Nishida \cite{matsumura1980initial} for initial data close to a non-vacuum equilibrium in $H^3$. Later, Hoff \cite{hoff1995,hoff1997} studied the global weak solutions with strictly positive initial density and temperature for discontinuous initial data. For the existence of solutions for large data, the major breakthrough is due to Lions \cite{Lions1998-2}, where he obtained the global existence of weak solutions with finite energy, under the condition that the adiabatic exponent $\gamma\ge\frac{9}{5}$ (3D problem), see also Feireisl \cite{Feireisl2001} for $\gamma>\frac{3}{2}$. Recently, Huang-Li-Xin \cite{HuangLiXin2012} obtained the global classical solutions to the Cauchy problem with small energy but possibly large oscillations.

For density-dependent viscosities ($\delta>0$ in \eqref{vis-c}), 
%instead of the uniform elliptic structure, 
the momentum equation $\eqref{1}_2$ becomes a double degenerate parabolic equation when vacuum appears, which brings new difficulties in studying the well-posedness of solutions to the system \eqref{1}.
Recently, Li-Xin \cite{9} and Vasseur-Yu \cite{Vasseur2016} independently investigated the global weak solutions for compressible Navier-Stokes systems adhere to the Bresch-Desjardins relation \cite{bresch2004some}.
For the case $\delta = 1$, Li-Pan-Zhu \cite{7} obtained the existence of 2D local classical solution with far field vacuum, which also applies to the 2D shallow water equations. They in \cite{6} also obtained the 3D local classical solutions when the $\delta > 1$, which has been extended to be a global one by Xin-Zhu \cite{MR4340224} for a class of smooth initial data that are of small density but possibly large velocities in some homogeneous Sobolev spaces. In the case of $\delta\in(0,1)$, under some limitation on $\delta$ and $\gamma$ as follows:
\begin{equation}\label{de-ga}\delta\rightarrow1,~~~\text{when}~~\gamma\rightarrow1,
\end{equation}
Xin-Zhu \cite{zhuxin2021}  established the local classical solution with a far-field vacuum in 3D total space. Later, through a more meticulous processing, Li-Li \cite{Lili2025}  expanded the range of coefficient values in Xin-Zhu \cite{zhuxin2021} and 
also proved the 3D local classical solution with a far-field vacuum, in which some more physical scenarios, such as Maxwellian molecules, are therefore included.

When it comes to the IBVP in  exterior domains, the present well-posedness theories mainly focus on the constant viscosity coefficient. Cai-Li-L\"u \cite{Cai2112} and Li-Li-L\"u \cite{Lili2022} respectively establish the global existence of classical solutions to the isentropic and full compressible Navier-Stokes equations, which are of small energy but possibly large oscillations for the IBVP with slip boundary condition in exterior domains.
For the case with density-dependent viscosities \eqref{vis-c}, the degenerate viscosity coefficient will bring new difficulties to the boundary estimations. There are few works on the IBVP to system \eqref{1} in exterior domains with vacuum and the general Navier-slip boundary condition, even for local solutions. Very recently, using the similar arguments as \cite{zhuxin2021},   Liu-Zhong \cite{liu2025ns} proved the local classical solution with a far-field vacuum in 3D  exterior domain under the condition  \eqref{de-ga}.

Therefore, this paper aims to develop new technical methods to extend the Cauchy problem to more general boundary value problems, while making the selection of $\delta$ independent of the gas coefficient $\gamma$ to encompass a wider range of physical scenarios. 
Motivated by Li-Li \cite{Lili2025}, we first reformulate \eqref{1} as
\begin{equation}\label{e19}
\left\{ \begin{array}{l}
  \rho_t+\mathrm{div}(\rho u)=0,\\
  \rho^{1-\delta}u_t+\rho^{1-\delta}u\cdot\nabla u-\mathcal{L}u+\frac{a\gamma}{\gamma-\delta}\nabla\rho^{\gamma-\delta}=\delta\nabla\log\rho\cdot \mathcal{S}(u),
       \end{array} \right.
\end{equation}
where
\begin{equation}\label{10}
\mathcal{L}u=\mu\Delta u+(\mu+\lambda)\nabla \mathrm{div}u,\quad\mathcal{S}(u)=2\mu\mathcal{D}(u)+\lambda \mathrm{div}u\mathbb{I}_3.%\quad\mathcal{D}(u)=\nabla u+(\nabla u)^T.
\end{equation}

The strong solutions of IBVP \eqref{1}-\eqref{7} considered in this paper is defined   as follows:
\begin{definition}
Let $T>0$ be a finite constant. A solution $(\rho,u)$ to the system \eqref{1}-\eqref{7} is called a strong solution if all the derivatives involved in \eqref{1} for $(\rho,u)$ are regular distributions, and Eqs.\eqref{1} hold almost everywhere in $[0, T]\times \Omega$.
\end{definition}

Before stating the main results, we first explain the notations and conventions
used throughout this paper. For a positive integer $k$ and $p\geq1$, we denote the standard Lebesgue and Sobolev spaces as follows:
$$
\begin{gathered}
\|f\|_{L^p}=\|f\|_{L^p(\Omega)},\ \|f\|_{W^{k,p}}=\|f\|_{W^{k,p}(\Omega)},\ \|f\|_{H^k}=\|f\|_{W^{k,2}(\Omega)},\ \|f\|_{L^{p}}=\|f\|_{W^{0,p}(\Omega)},\\\
D^{k,p}=\{f\in L^1_{loc}(\Omega):|f|_{D^{k,p}}=\|\nabla^kf\|_{L^p}<\infty\},\ D^k=D^{k,2},\\
D_0^{1}(\Omega)=\{f\in L^{6}(\Omega):\|\nabla f\|_{L^2}<\infty\},\ \|f\|_{X\cap Y}=\|f\|_X+\|f\|_Y.
\end{gathered}
$$
Let $B_R=\{x\in\mathbb{R}^3||x|<R\}$, we define  $$\Omega_R\triangleq \Omega\cap B_R$$ 
where  $R>2R_0+1$ with $R_0$ is  chosen to be sufficiently large such that $\bar D\subset B_{R_0}$. In particular, we denote $\Omega_0\triangleq \Omega\cap B_{2R_0}$. 

The main result of this paper is the following Theorem \ref{T1} concerning the local existence of strong solution.

\begin{theorem}\label{T1}
%The definition of $\Omega_1$ and
For parameters $(\gamma,\delta)$ satisfy
\begin{equation}
%\Omega_1\triangleq\{x\in\Omega|dist(x,\partial \Omega)\leq 1\},\quad
\gamma>1,\ 0<\delta<1. \label{115}
\end{equation}
If the initial data $(\rho_0, u_0)$ satisfies
\begin{equation}
\begin{split}\label{1117}
\rho_0^\frac{1-\delta}{2}u_0\in L^2,\ u_0\in D^{1}\cap D^2,\
\rho_0^{\gamma-\frac{1+\delta}{2}}\in D_0^{1}\cap D^2,\ \nabla\rho_0^\frac{\delta-1}{2}\in D_0^{1},%\  \rho_0^\frac{\delta-1}{2}\in L^6(\Omega_1),
\end{split}
\end{equation}
and the compatibility condition:
\begin{equation}\label{118}
\mathcal{L}(u_0)=\rho_0^\frac{1-\delta}{2}g
\end{equation}
for some $g\in L^2$. Then there exist a positive time $T_0>0$ such that the problem  \eqref{1}-\eqref{7} has a unique strong solution $(\rho, u)$ on $[0,T_0]\times \Omega$ satisfying 
\begin{equation}\label{e116}
\left\{ \begin{array}{l}
\rho^{\gamma-\frac{1+\delta}{2}}\in L^\infty([0,T_0];D_0^1\cap D^2),\\
\nabla\rho^\frac{\delta-1}{2}\in L^\infty([0,T_0];D_0^{1}),
\quad\rho^\frac{\delta-1}{2}\in L^\infty([0,T_0];L^6(\Omega_0)),\\
 u\in L^\infty([0,T_0];D_0^1\cap D^2)\cap L^2([0,T_0];D_0^1\cap D^3),\\
\rho^\frac{1-\delta}{2}u_t \in L^\infty([0,T_0];L^2),\quad u_t\in L^2([0,T_0];D_0^1),\\
\rho^\frac{\delta-1}{2}Lu,\ \rho^\frac{\delta-1}{2}\nabla\mathrm{div}u\in L^2([0,T_0];H^1).
    \end{array} \right.
\end{equation}
\end{theorem}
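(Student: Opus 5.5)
The argument runs through a regularized linearized problem, uniform a priori estimates, passage to the limit, and a separate uniqueness argument, following the reformulation \eqref{e19} of Li-Li. New variables are introduced so that the degeneracy appears only through coefficients we can control:
\[
\phi=\rho^{\gamma-\frac{1+\delta}{2}},\qquad \psi=\nabla\rho^{\frac{\delta-1}{2}},\qquad \varphi=\rho^{\frac{1-\delta}{2}} .
\]
Then $\delta\nabla\log\rho=\frac{2\delta}{\delta-1}\varphi\,\psi$ in suitable form, and the pressure term $\nabla\rho^{\gamma-\delta}$ equals a multiple of $\varphi\nabla\phi$. Multiplying $\eqref{e19}_2$ by $\varphi^{-1}$ turns the momentum equation into
\[
\varphi u_t+\varphi u\cdot\nabla u-\varphi^{-1}\mathcal{L}u+c\nabla\phi=c'\psi\cdot\mathcal{S}(u).
\]
The variables $\phi$ and $\psi$ satisfy transport equations:
\[
\phi_t+u\cdot\nabla\phi+(\gamma-\tfrac{1+\delta}{2})\phi\,\mathrm{div}u=0,\qquad
\psi_t+\nabla(u\cdot\psi)+\tfrac{\delta-1}{2}\nabla(\rho^{\frac{\delta-1}{2}}\mathrm{div}u)=0.
\]
The powers here are independent of $\gamma$. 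This is why $\delta\in(0,1)$ can be chosen freely.

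\textbf{Approximation and linearization.} First I regularize by replacing $\rho_0$ with $\rho_0+\eta$, so the density is bounded away from zero, and work on the truncated domains $\Omega_R$ with the Navier-slip condition \eqref{ch1} on $\partial\Omega$ and, for example, a Dirichlet condition on $\partial B_R$. I then linearize: $\phi,\psi$ are transported by a given velocity $v$ in the class \eqref{e116}, and $u$ solves a linear uniformly parabolic Lamé system with that coefficient. Standard theory for strictly positive density gives a unique solution of the linear problem. The remaining task is to derive estimates uniform in $\eta$ and $R$.

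\textbf{Uniform estimates.} These come in the following order.
\begin{enumerate}
\item Energy estimates for $\phi$ in $D_0^1\cap D^2$ and for $\psi$ in $D_0^1$, obtained from the transport equations. Integrating by parts is harmless here because $v\cdot n=0$.
\item The basic estimate $\|\varphi u_t\|_{L^2}$ and $\|\nabla u\|_{L^2}$. I multiply by $u_t$ and use
\[
-\int\mathcal{L}u\cdot u_t=\frac{d}{dt}\Big(\frac{\mu}{2}\|\mathrm{curl}\,u\|_{L^2}^2+\frac{2\mu+\lambda}{2}\|\mathrm{div}\,u\|_{L^2}^2\Big)+\frac{d}{dt}\int_{\partial\Omega}\frac{\mu}{2}\,Au\cdot u .
\]
This holds since $\mathcal{L}u=(2\mu+\lambda)\nabla\mathrm{div}u-\mu\,\mathrm{curl}\,\mathrm{curl}\,u$ and the boundary condition gives $\mathrm{curl}\,u\times n=-Au$. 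The boundary term is controlled by the trace inequality on the bounded set $\Omega_0$.
\item Time-differentiated estimates for $\varphi u_t$ in $L^\infty L^2$ and $\nabla u_t$ in $L^2L^2$. These use the compatibility condition \eqref{118} to bound the initial value of $\varphi u_t$.
\item Elliptic regularity for the Lamé operator with the slip condition in the exterior domain, giving $D^2$ and $D^3$ bounds. For this I use the div--curl estimate $\|\nabla u\|_{W^{k,p}}\le C(\|\mathrm{div}u\|_{W^{k,p}}+\|\mathrm{curl}u\|_{W^{k,p}}+\|u\|_{L^p(\Omega_0)})$ of Cai-Li-L\"u type.
\item Weighted estimates for $\rho^{\frac{\delta-1}{2}}\mathcal{L}u$ and $\rho^{\frac{\delta-1}{2}}\nabla\mathrm{div}u$ in $L^2H^1$, read off directly from the momentum equation.
\end{enumerate}
A bootstrap on a short interval $T_0$, depending only on the data, closes all of these estimates.

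\textbf{Main obstacle.} I expect the hardest step to be the boundary terms that appear in the higher-order estimates (steps 3 through 5). The weight $\varphi^{-1}$ multiplies $\mathcal{L}u$, so integrating by parts produces terms like $\int_{\partial\Omega}\varphi^{-1}(\cdots)$ together with commutators $\nabla\varphi^{-1}=c\,\psi$. The plan for these has three parts:
\begin{enumerate}
\item Use $\rho^{\frac{\delta-1}{2}}\in L^6(\Omega_0)$, which is bounded near the boundary because vacuum occurs only at the far field.
\item Rewrite $u\times n$ and $\mathrm{curl}\,u\times n$ on $\partial\Omega$ through the boundary condition, which yields $(\mathrm{curl}\,u+ (Au)^\perp)\times n=0$.
\item Convert surface integrals to volume integrals over $\Omega_0$ by means of a cutoff.
\end{enumerate}
The far-field weight $\psi$ is handled as in Li-Li by placing $\psi\in L^6$ against $\nabla u\in L^3$.

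\textbf{Passage to the limit and uniqueness.} Uniform bounds let me pass to the limit, first in the linear iteration via a contraction in a low norm (weighted $L^2$ of differences), then $\eta\to0$ and $R\to\infty$ by compactness. Uniqueness follows from the same low-norm difference estimate combined with a Gronwall argument.
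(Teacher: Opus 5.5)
\paragraph{The gap is in your Step 5.} You claim that $\rho^{\frac{\delta-1}{2}}\nabla\mathrm{div}u\in L^2H^1$ can be ``read off directly from the momentum equation.'' It cannot. The momentum equation, multiplied by $\rho^{\frac{\delta-1}{2}}$ as in \eqref{zsa1}, determines only the combination
$\rho^{\frac{\delta-1}{2}}\mathcal{L}u=\rho^{\frac{\delta-1}{2}}\big((2\mu+\lambda)\nabla\mathrm{div}u-\mu\,\mathrm{curl}\,\mathrm{curl}\,u\big)$.
It says nothing about $w=\rho^{\frac{\delta-1}{2}}\nabla\mathrm{div}u$ separately. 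Yet $w$ is exactly what your Step 1 needs for $\psi$. The source in your $\psi$-equation is $\frac{1-\delta}{2}\nabla(\rho^{\frac{\delta-1}{2}}\mathrm{div}u)=\frac{1-\delta}{2}(\psi\,\mathrm{div}u+w)$, so bounding $\|\psi\|_{L^6}$ and $\|\nabla\psi\|_{L^2}$ requires $w\in L^1(0,T;L^6\cap D^1)$. Since $\psi$ enters every other estimate through $\delta\nabla\log\rho\cdot\mathcal{S}(u)$, the bootstrap does not close without this. Separating the gradient part is not a free orthogonality statement here, because the weight $\rho^{\frac{\delta-1}{2}}$ is unbounded at the far field and there are boundaries.

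\paragraph{How the paper handles it (Lemma \ref{L3.6}).} This is the heart of the proof, done at two levels.
\begin{itemize}
\item At the $L^2$ level, expand $\|\rho^{\frac{\delta-1}{2}}\mathcal{L}u\|_{L^2}^2$ and integrate the cross term $\int\rho^{\delta-1}\nabla\mathrm{div}u\cdot\mathrm{curl}\,\mathrm{curl}\,u$ by parts. The commutator is bounded by $2\|\psi\|_{L^6}\|\mathrm{div}u\|_{L^3}\|\rho^{\frac{\delta-1}{2}}\mathrm{curl}\,\mathrm{curl}\,u\|_{L^2}$. The boundary integral on $\partial\Omega$ is handled by the trace theorem and the lower bound of $\rho$ on $\Omega_0$. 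The one on $\partial B_R$ vanishes because the slip condition $\mathrm{curl}\,u\times n=0$ there gives $\mathrm{curl}\,\mathrm{curl}\,u\cdot n=0$, see \eqref{curlu}.
\item At the $H^1$ level, write $(2\mu+\lambda)w=\rho^{\frac{\delta-1}{2}}\mathcal{L}u+\mu\rho^{\frac{\delta-1}{2}}\mathrm{curl}\,\mathrm{curl}\,u$. Apply the div--curl estimate of Lemma \ref{l22}, with its normal-trace term, to $v=\rho^{\frac{\delta-1}{2}}\mathrm{curl}\,\mathrm{curl}\,u$.
\begin{itemize}
\item $\mathrm{div}\,v=\psi\cdot\mathrm{curl}\,\mathrm{curl}\,u$ is lower order.
\item $\mathrm{curl}\,v$ comes from the curl of \eqref{zsa2}, where the gradient terms, including $\nabla(\rho^{\frac{\delta-1}{2}}\mathrm{div}u)$, drop out.
\item $v\cdot n$ vanishes on $\partial B_R$ and is controlled on $\partial\Omega$ by the density lower bound.
\end{itemize}
\end{itemize}

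\paragraph{The outer boundary condition is not a free choice.} This argument also shows that your suggested Dirichlet condition on $\partial B_R$ breaks the proof. With Dirichlet data, $\mathrm{curl}\,\mathrm{curl}\,u\cdot n\neq0$ on $\partial B_R$. The outer boundary integrals then carry the factor $\rho^{\delta-1}$, which is not bounded uniformly in $R$ (or $\eta$) because $\rho\to0$ at the far field. Your ``main obstacle'' discussion treats only $\partial\Omega$, where the density is bounded below, and misses this.

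\paragraph{A smaller point on uniqueness.} It cannot be closed in a weighted $L^2$ norm of the differences. The transport equation for the difference of $\psi$ (or of $\nabla\log\rho$) contains second derivatives of $\bar u$. The paper therefore tests the difference equation with both $\bar u$ and $\bar u_t$, and controls $\|\bar h\|_{H^1}$, $\|\nabla\bar g\|_{L^2}$ and $\|\nabla\bar\varphi\|_{L^2}$ together with $\|\nabla\bar u\|_{L^2}$.
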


\begin{remark}
The conditions \eqref{115}-\eqref{1117} in Theorem \ref{T1} identify a class of admissible
initial data that makes the problem \eqref{1}-\eqref{7} solvable, which are satisfied by, for example,
\begin{equation}
\rho_0(x)=\frac{1}{1+|x|^{2\alpha}},\ u_0(x)\in C_0^2(\Omega),~\text{with}~~\frac{1}{2(2\gamma-1-\delta)}<\alpha<\frac{1}{2(1-\delta)}.
\end{equation}
In particular, the range of $\delta$ is independent of $\gamma$. This illustrates that Theorem \ref{T1} is applicable to all $\gamma>1$ and $\delta\in(0,1)$, which  is different from the results of Xin-Zhu \cite{zhuxin2021} and Liu-Zhong \cite{liu2025ns}, in which the limitation \eqref{de-ga} is needed.  Furthermore,   the only compatibility condition  required in Theorem \ref{T1} is \eqref{118}, which removed the other two  additional restrictive constraints in   \cite{zhuxin2021,liu2025ns}.  
Therefore, our Theorem \ref{T1} holds true for  more general initial data.  
Moreover, if $2\gamma\leq\delta+\frac{4}{3}$, then $\rho_0\in L^1(\Omega)$, and the density $\rho$ constructed in Theorem \ref{T1} may have finite total mass.
\end{remark}

\begin{remark}
In fact, based on the choice of $\rho_0$ (see \eqref{1117}) and Lemma \ref{087}, we can obtain that $\rho_0$ has a positive lower bound on $\Omega_0$, and therefore it has 
\begin{equation}\label{260114}
    \rho_0^\frac{\delta-1}{2}\in L^6(\Omega_0). 
\end{equation}Theorem \ref{T1} gives a new priori estimate 
%Through standard calculations, we can obtain 
$\rho^\frac{\delta-1}{2}\in L^\infty([0,T_0];L^6(\Omega_0))$, which combining with $\nabla\rho^\frac{\delta-1}{2}\in L^\infty([0,T_0];D_0^1)$ gives that $\rho$ has a positive lower bound near the boundary $\partial\Omega$. Roughly speaking, the density  retains a positive lower bound on $\Omega_0$ as initial one. This new observation  plays a crucial role in handing the boundary estimation of $u$. It should be mentioned here that this is a significant difference  compared to the Cauchy problem in Li-Li \cite{Lili2025} and  IBVP in  exterior domains Liu-Zhong \cite{liu2025ns}.
%, one in literature \cite{zhuxin2021, liu2025ns}. (Formally speaking, our approach is to divide the equation \eqref{1}$_2$ by $\rho^\delta$ while they divide it by $\rho$.)
\end{remark}

\begin{remark}
For the time continuity, similar to \cite[Remark 1.2]{Lili2025}, we can also deduce from \eqref{e116} and the classical Sobolev embedding results that
\begin{gather}
    \rho^{\gamma-\frac{1+\delta}2}\in C([0, T_0];D_0^1\cap D^2),\ \nabla\rho^{\frac{\delta-1}{2}}\in C([0, T_0];D_0^1),
    \\ 
    u\in C([0,T_0];D_0^1\cap D^2),\ 
    \rho^\frac{1-\delta}2u,\ \rho^\frac{1-\delta}2u_t\in C([0,T_0];L^2).
\end{gather}
\end{remark}

\begin{remark}
Theorem \ref{T1} shows that
\begin{align}
\rho^{\gamma-\frac{1+\delta}{2}}\in L^\infty([0,T_0];D_0^1\cap D^2),\quad \nabla\rho^\frac{\delta-1}{2}\in L^\infty([0,T_0];D_0^{1}),
\end{align}
which together with Lemma \ref{087} implies that a vacuum can only occur at infinity.
\end{remark}

%\begin{remark}
%Compared with \cite{liu2025ns, zhuxin2021} requiring that the %initial data belong to $H^3$, %to reduce the regularity requirements on the initial values 
%we only need the initial data belong to $H^2$ (see %\eqref{1117} for details).
%Moreover, we only need one compatibility condition, while they %need three. Therefore, our results have lower limitations on %the initial values.
%\end{remark}
\begin{remark}
    In \cite{9} and \cite{Vasseur2016}, the viscosity coefficients $\mu(\rho)$ and $\lambda(\rho)$ are required to satisfy B-D relation:
    \begin{equation}
        \lambda(\rho)=2(\mu'(\rho)\rho-\mu(\rho)),
    \end{equation}
    which offers an estimate $\mu'(\rho)\nabla\sqrt\rho\in L^\infty([0,T];L^2(\mathbb{R}^3))$ provided that
    $\mu'(\rho_0)\nabla\sqrt\rho_0\in L^2(\mathbb{R}^3)$. The additional restriction is not necessary in this paper. 
\end{remark}

\begin{remark}
The general Navier-type slip condition derived in \cite{Navier1827} is stated as follows:
\begin{align}\label{ch2}
u\cdot n=0,\,\,\,\ \big(2\mathcal{D}(u) n+\vartheta u\big)_{tan}=0, \,\,\,\text{on} \,\,\,\partial\Omega,%\,\,\,\text{or}\,\,\ u=0 \,\,\,\text{on} \,\,\,\partial\Omega,
\end{align}
%where $\mathcal{D}(u) = \big(\nabla u + (\nabla u)^{T}\big)/2$ is the shear stress,
where $\vartheta$ is a scalar friction function which measures the tendency of the fluid to slip on the boundary, and the symbol $v_{tan}$ represents the projection of tangent plane of the vector $v$ on $\partial\Omega$. In fact, the boundary condition \eqref{ch2} is equivalent to \eqref{ch1} in the sense of the distribution (see \cite{{Cai2023}} for details).
\end{remark}

Finally, to answer the question of whether the local solution constructed in Theorem 1.1 can be extended globally in time, similarly to Huang-Li \cite{14}, we have the following blow-up criterion:
\begin{theorem}\label{T2}
Let $(\rho, u)$ be a strong solution to the IBVP \eqref{1}-\eqref{7} satisfying \eqref{e116}. Assume that the initial data
$(\rho_0, u_0)$ satisfies \eqref{115}-\eqref{118} and $\rho_0^{\gamma-\frac{1+\delta}{2}}\in L^{6\alpha}$, where
$\max\{\frac{1-\delta}{2\gamma-1-\delta},\frac13\}<\alpha<1$. If $T^\ast<\infty$ is the maximal time of existence, then
\begin{equation}\label{blowup1}
\lim_{T\rightarrow T^\ast}\left(\|\mathcal{D}(u)\|_{L^1(0,T;L^\infty)}+\|\rho^\frac{\delta-1}{2}\|_{L^\infty(0,T;L^6(\Omega_0))} +\|\nabla\rho^\frac{\delta-1}{2}\|_{L^\infty(0,T;L^6\cap D^{1,2})}\right)=+\infty.
\end{equation}
%where $\mathcal{D}(u)$ is the deformation tensor defined by \eqref{10}.
\end{theorem}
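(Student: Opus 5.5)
We argue by contradiction, in the spirit of Huang--Li \cite{14}. Suppose $T^\ast<\infty$ and
\begin{equation*}
M_0\triangleq\lim_{T\rightarrow T^\ast}\Big(\|\mathcal{D}(u)\|_{L^1(0,T;L^\infty)}+\|\rho^\frac{\delta-1}{2}\|_{L^\infty(0,T;L^6(\Omega_0))}+\|\nabla\rho^\frac{\delta-1}{2}\|_{L^\infty(0,T;L^6\cap D^{1,2})}\Big)<\infty .
\end{equation*}
We aim to show that every quantity in \eqref{e116} is bounded on $[0,T^\ast)$ by a constant $C$ depending only on $M_0$, $T^\ast$ and the data. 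Then $(\rho,u)(\cdot,t)$ satisfies \eqref{1117}--\eqref{118} uniformly as $t\to T^\ast$. For the compatibility condition we take $g=\rho^{\frac{\delta-1}{2}}\mathcal{L}u$, which lies in $L^2$ by the bound on $\rho^{\frac{\delta-1}2}Lu$. Theorem \ref{T1}, applied from a time close to $T^\ast$, then extends the solution beyond $T^\ast$, which is a contradiction.

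\emph{Lower-order estimates.} The assumption on $\rho^{\frac{\delta-1}{2}}$ and $\nabla\rho^\frac{\delta-1}{2}$ gives, via Lemma \ref{087}, a uniform positive lower bound of $\rho$ on $\Omega_0$. It also gives control of $\nabla\log\rho=\frac{2}{\delta-1}\rho^{\frac{1-\delta}{2}}\nabla\rho^{\frac{\delta-1}{2}}$ in $L^6\cap D^{1,2}$ up to powers of $\rho^{\frac{1-\delta}2}$, which are bounded because $\rho$ is bounded; that bound follows from $\rho^{\gamma-\frac{1+\delta}2}\in L^{6\alpha}$ transported along the flow.

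The transport equation for $\rho^{\gamma-\frac{1+\delta}{2}}$, together with $\int_0^{T}\|\mathcal{D}(u)\|_{L^\infty}dt\le M_0$, gives $L^{6\alpha}$ bounds on $\rho^{\gamma-\frac{1+\delta}2}$ and upper bounds on $\rho$. The range of $\alpha$ is chosen so that $\rho^{\gamma-\delta}$, and the weight $\rho^{1-\delta}$ appearing below, fall in the correct Lebesgue spaces. Only $\mathcal{D}(u)$, not $\nabla u$, is assumed bounded, so where $\nabla u$ appears we need a Beale--Kato--Majda type bound with a logarithm, or better an energy identity in which only $\mathcal{D}(u)$ enters (as for the density transport, where $\mathrm{div}u=\mathrm{tr}\,\mathcal{D}(u)$).

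Next we take the basic energy estimate for \eqref{e19}: multiply by $u$ and integrate. The boundary terms are handled with \eqref{ch1}, using the identity $-\Delta u=\curl\curl u-\nabla\mathrm{div}u$ to turn them into $\int_{\partial\Omega}Au\cdot u$, which is controlled by trace inequalities. The term $\delta\nabla\log\rho\cdot\mathcal{S}(u)$ is absorbed using the $L^6$ bound of $\nabla\log\rho$.

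\emph{Higher-order estimates.} Multiplying by $u_t$ gives $\|\nabla u\|_{L^\infty_tL^2}+\|\rho^{\frac{1-\delta}2}u_t\|_{L^2_tL^2}$. Differentiating in $t$ and testing with $u_t$ gives $\|\rho^{\frac{1-\delta}2}u_t\|_{L^\infty_tL^2}+\|\nabla u_t\|_{L^2_tL^2}$. In both steps the terms $\rho^{1-\delta}u\cdot\nabla u$ and $\nabla\log\rho\cdot\mathcal{S}(u)$ are estimated through Gagliardo--Nirenberg and Gronwall. The elliptic estimates for $\mathcal{L}$ with Navier-slip conditions in exterior domains (the Lamé/div-curl estimates used earlier in the paper) turn these into $D^2$ and then $D^3$ bounds of $u$. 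For the $D^3$ bound, the right-hand side $\nabla(\nabla\log\rho\cdot\mathcal{S}(u))$ needs $\nabla^2\log\rho\in L^2$, which is supplied by $\nabla\rho^{\frac{\delta-1}2}\in D^{1,2}$ and the lower bound near $\partial\Omega$. Finally, the $D^1\cap D^2$ norms of $\rho^{\gamma-\frac{1+\delta}2}$ and the weighted bounds $\rho^{\frac{\delta-1}2}\mathcal{L}u,\ \rho^{\frac{\delta-1}2}\nabla\mathrm{div}u\in L^2H^1$ follow by differentiating the transport equations and using $\int_0^T\|\nabla u\|_{W^{1,6}}dt<\infty$.

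\textbf{Main obstacle.} We expect the main difficulty to be closing the $u_t$-estimate with only $\mathcal{D}(u)\in L^1L^\infty$, while the degenerate weight $\rho^{1-\delta}$ vanishes at infinity, so that $u_t$ itself is not controlled in $L^2$. We would first try to use $\nabla u_t\in L^2$ with the Sobolev inequality $\|u_t\|_{L^6}\le C\|\nabla u_t\|_{L^2}$, valid in $D_0^1$ thanks to $u_t\cdot n=0$, so that no lower-order norm of $u_t$ is needed. The boundary terms would be handled by localising with a cutoff to $\Omega_0$, where $\rho$ is bounded below and the problem is nondegenerate.
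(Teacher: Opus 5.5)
Your contradiction/continuation framework is the same as the paper's. However, the estimate that carries the proof is missing, and the route you sketch for it does not close. The decisive bound is $\sup_t\|\nabla u\|_{L^2}^2+\int_0^T\|\nabla^2u\|_{L^2}^2dt\le C$. Suppose you test with $u_t$ and treat $\rho^{1-\delta}u\cdot\nabla u$ by Gagliardo--Nirenberg. You get $\|\rho^{\frac{1-\delta}{2}}u\cdot\nabla u\|_{L^2}^2\le C\|\nabla u\|_{L^2}^3\|\nabla^2u\|_{L^2}$. The Lam\'e estimate bounds $\|\nabla^2u\|_{L^2}$ only by $\|\rho^{\frac{1-\delta}{2}}u_t\|_{L^2}$ plus terms that again contain $\|u\cdot\nabla u\|_{L^2}$. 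After absorption you are left with $\frac{d}{dt}\|\nabla u\|_{L^2}^2\le C\|\nabla u\|_{L^2}^6+\cdots$. The energy estimate only gives $\int_0^T\|\nabla u\|_{L^2}^2dt<\infty$, so this Riccati-type inequality yields no bound up to $T^\ast$. Note that the hypothesis $\mathcal{D}(u)\in L^1(0,T;L^\infty)$ is never used in this step of your plan. You say an identity involving only $\mathcal{D}(u)$ is needed, but you never supply one, and a Beale--Kato--Majda argument is unavailable because $\curl u$ is not controlled.

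The missing idea is to test the momentum equation with $-2\rho^{\delta-1}\mathcal{L}u$, which is the paper's choice (classically one uses $\dot u=u_t+u\cdot\nabla u$). The weight then cancels, and the convective term becomes $\int (u\cdot\nabla u)\cdot\big((2\mu+\lambda)\nabla\div u-\mu\curl\curl u\big)dx$. Integrating by parts as in Huang--Li turns this into $\int\nabla u:(\nabla u)^T\div u$, $\int(\div u)^3$, $\int\curl u\cdot\mathcal{D}(u)\cdot\curl u$ and $\int|\curl u|^2\div u$. Each of these is bounded by $\|\mathcal{D}(u)\|_{L^\infty}\|\nabla u\|_{L^2}^2$. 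The boundary integrals are handled with $u\cdot\nabla u\cdot n=-u\cdot\nabla n\cdot u$, $\curl u\times n=-Au$, the trace theorem and the lower bound of $\rho$ on $\Omega_0$. This must be coupled with the $L^2$ estimate of $\nabla\varphi$, $\varphi=\rho^{\gamma-\frac{1+\delta}{2}}$, where again $\nabla\varphi\cdot\nabla u\cdot\nabla\varphi=\nabla\varphi\cdot\mathcal{D}(u)\cdot\nabla\varphi$. The result is a linear Gr\"onwall inequality with the integrable coefficient $1+\|\mathcal{D}(u)\|_{L^\infty}+\|\nabla u\|_{L^2}^2$.

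Three related corrections follow.
\begin{itemize}
\item The $u_t$-estimate you call the main obstacle is routine once this bound is known; its Gr\"onwall coefficient is $1+\|\nabla^2u\|_{L^2}^2$.
\item The density norms cannot be postponed to the end. The norms $\|\nabla\varphi\|_{L^2}$, $\|\nabla\varphi\|_{L^6}$ and $\|\nabla^2\varphi\|_{L^2}$ appear on the right of the elliptic bounds for $\|\nabla^2u\|_{L^2}$, $\|\nabla^2u\|_{L^6}$ and $\|\nabla^3u\|_{L^2}$ respectively. Each must therefore be estimated jointly with the corresponding velocity norm.
\item In the basic energy estimate you should make explicit that the pressure term is integrated by parts onto $\varphi\in L^{6\alpha}$. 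This is necessary because $\nabla\varphi\in L^2$ is not yet available at that stage.
\end{itemize}
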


\begin{remark}
For the Navier-type slip boundary condition \eqref{ch1}, since negative powers of $\rho$  may occur when dealing with boundary estimation of $u$, one should make sure that $\rho$ has a positive lower bound near the boundary $\partial\Omega$ (see \eqref{trace14}, \eqref{trace31}) and thus the second term in  \eqref{blowup1} is needed. 
In particular, if $A$ is a semi-positive definite matrix, the  corresponding  boundary   estimates are trivial,  and the blow-up criterion is simplified to
\begin{equation}
\lim_{T\rightarrow T^\ast}\left(\|\mathcal{D}(u)\|_{L^1(0,T;L^\infty)} +\|\nabla\rho^\frac{\delta-1}{2}\|_{L^\infty(0,T;L^6\cap D^{1,2})}\right)=+\infty.
\end{equation}
\end{remark}

We now make some comments on the analysis in this paper. Motivated by Li-Li \cite{Lili2025}, we  divide the equation \eqref{1}$_2$ by $\rho^\delta$ and then investigate the resulting equations \eqref{e19}  to establish the strong solutions of  system \eqref{1}. 
In contrast to the Cauchy problem, the boundary conditions \eqref{ch1} will present fundamental challenges, which require the development of new technical approaches.  

First, we consider the system \eqref{e19} in  annular region $\Omega_R$ which is  utilized   %the cut-off function to combine a priori estimates of the elliptic problem of general boundary conditions in a bounded domain and the $B_R$ to obtain a priori estimates necessary in 
to approach the exterior domain $\Omega$, and then establish the necessary a priori estimates independent of $R$. Note that the  first Betti number of  $\Omega_R$ is not zero, 
the div-curl Lemma \ref{l22} is as follows
%between exterior domains and bounded domains, especially when the first Betti number is not zero, like $\Omega_R$. 
\begin{equation}\label{0117}
    \|\nabla u\|_{L^2(\Omega_R)}\leq C\bigr(\|\div u\|_{L^2(\Omega_R)}+\|\curl u\|_{L^2(\Omega_R)}+\|u\|_{L^2(\Omega_R)}\bigr),
\end{equation}
which is completely different from the case of simply connected domains. Due to the degeneracy of the density at far fields,  the norm $\|u\|_{L^2(\Omega_R)}$ in the right hand of \eqref{0117} lacks a uniform estimate  with respect to $R$ in our framework.  
Fortunately, we can use the cutoff function method to localize $\|u\|_{L^2(\Omega_R)}$ 
%and shift $\|u\|_{L^2(\Omega_R)}$ 
to $\|u\|_{L^2(\Omega_0)}$, which could be controlled by the new  observation of that  the density $\rho$ has the lower bound near the boundary, see  \eqref{dsxc} for details. Indeed, this key observation is also crucial for   dealing with the boundary terms in deriving the a prior estimation of $u$, in which the sign of the boundary terms is undefined when the  matrix $A$ is not necessarily positive definite, see \eqref{trace3}-\eqref{rho_inf}. 

%For low-order estimations, it is sufficient for us to directly apply the trace theorem (see \eqref{trace1}, \eqref{trace2}). For higher-order estimations, we can first apply the trace theorem for processing, combine the estimation of $u_t$ with $\rho^\frac{1-\delta}{2}$ weights and the positive lower bound of $\rho$ near the boundary, and then perform appropriate operations to control it (see \eqref{trace3}-\eqref{rho_inf}).

%Similarly, when dealing with boundary terms in the prior estimation of $u_t$, since matrix $A$ is not necessarily positive definite, the sign of boundary term is unknown. Considering that $\rho$ has a positive lower bound near the boundary $\partial\Omega$, combined with the estimation of $u_t$ with $\rho^\frac{1-\delta}2$ weights, we can directly use the trace theorem to control it (see \eqref{trace3}-\eqref{rho_inf} for details).

%Next, since matrix $A$ in the boundary conditions may not necessarily be semi-positive definite, all boundary terms generated by the prior estimation need to be processed and estimated by norms of $u$. However, parts of the estimates of u carry $\rho$ weights. Fortunately, $\rho$ has a positive lower bound near the boundary (see Lemma \ref{087}), which is sufficient for us to complete the proof of the Lemma \ref{L3.1}-\ref{L3.3}.

Next, as mentioned in \cite{Lili2025},  it's important to  estimate the term  $\nabla\rho^\frac{\delta-1}{2}$, which could be dominated by $w\triangleq\rho^\frac{\delta-1}{2}\nabla\div u$ due to the mass equation. However, the methods used in Li-Li \cite{Lili2025} are not apply to our problem, where the essential difficulties arising from the annular-type domains  $\Omega_R$. More precisely, one cannot derive the desired estimates on $\nabla w$ via the standard elliptic estimates for a Neumann boundary-value problem (see \cite[Lemma 2.4]{Lili2025}) in $\Omega_R$. To address these issues, we need some new  ideas to deduce the crucial estimates of $\|\nabla w\|_{L^2(\Omega_R)}$, see Lemma \ref{L3.6} for details. On the one hand, we deduce form  the equation \eqref{e19}$_2$ that $w$ can be rewritten as 
\begin{align}\label{260122}
(2\mu+\lambda)w=\rho^\frac{\delta-1}{2}\mathcal{L}u+\mu\rho^\frac{\delta-1}{2}\curl\curl u,
\end{align}
where $\rho^\frac{\delta-1}{2}\mathcal{L}u$ satisfies equation \eqref{zsa1} and thus could be estimated directly. On the other hand, with the help of Lemma \ref{l22}, one can deal with the second term in \eqref{260122} as follows 
\begin{align}
&\|\nabla\left(\rho^\frac{\delta-1}{2}\curl\curl u\right)\|_{L^2(\Omega_R)}^2\nonumber\\
&\leq C\|\div\left(\rho^\frac{\delta-1}{2}\curl\curl u\right)\|_{L^2(\Omega_R)}^2+C\|\curl\left(\rho^\frac{\delta-1}{2}\curl\curl u\right)\|_{L^2(\Omega_R)}^2\nonumber\\
&\quad+C\|\rho^\frac{\delta-1}{2}\curl\curl u\cdot n\|_{H^{1/2}(\partial\Omega)}^2+C\|\rho^\frac{\delta-1}{2}\curl\curl u\|_{L^2(\Omega_R)}^2\\
&\leq C\|\nabla\rho^\frac{\delta-1}{2}\|_{L^6(\Omega_R)}^2\|\nabla^2 u\|_{L^3(\Omega_R)}^2+C\|\curl\left(\rho^\frac{\delta-1}{2}\curl\curl u\right)\|_{L^2(\Omega_R)}^2\nonumber\\
&\quad+C\mathcal{E}^\beta\|\curl\curl u\|_{H^1(\Omega_R)}^2+C\|\rho^\frac{\delta-1}{2}\curl\curl u\|_{L^2(\Omega_R)}^2,\nonumber
%&\leq C\mathcal{E}^\beta\bigl(1+\|\nabla u_t\|_{L^2}^2\bigr)+C\|\rho^\frac{\delta-1}{2}\mathcal{L}u\|_{L^2}^2.\nonumber
\end{align}
where the term concerning  $\curl\left(\rho^\frac{\delta-1}{2}\curl\curl u\right)$ can be estimated from \eqref{zsa2}. 

Then,  combining all these ideas stated above with those
due to \cite{3,4,5,13,Lili2025}, we derive some desired a priori estimates on the solutions. Notice that  all these bounds are independent of both the radius  $R$ and the lower bound of the initial density, one can establish the local existence and uniqueness of strong solutions to the system.

Finally,  under an additional initial condition of that $\rho_0^{\gamma-\frac{1+\delta}{2}}\in L^{6\alpha}$, we can deal with the boundary terms in deducing  the estimations of pressure term (see \eqref{4.8}-\eqref{ygt1} for details) and thus obtain the  blow-up criterion in Theorem \ref{T2}.  

The rest of the paper is organized as follows: In Section 2, we collect some elementary facts and inequalities that will be used in later analysis. Section 3 is devoted to the a priori estimates required to establish the local existence and uniqueness of strong solutions. Then the main result, Theorem 1.1, is proved in Section 4. Finally, we give a blow-up criterion for the strong solutions in Section 5.

\section{Preliminaries}
In this section, some useful known facts and inequalities are listed. The first is the well-known Gagliardo-Nirenberg inequality \cite{10}.
\begin{lemma}\label{G-N}
Assume that $U\subset \mathbb{R}^3$ is a bounded Lipschitz domain, for $p\in[2,6]$, $q\in(1,\infty)$, and $r\in(3,\infty)$, there exist generic constants $C$,~$C_1$,~$C_2$ which depend on $p, q, r,$ and the Lipschitz character of $U$, such that for any
$f\in H^1(U),\ and\ g\in L^q(U)\cap D^{1,r}(U),$
one has
\begin{equation}
\|f\|_{L^p(U)}\leq C\|f\|^{(6-p)/2p}_{L^2(U)}\|\nabla f\|^{(3p-6)/2p}_{L^2(U)}+C_1\|f\|_{L^2(U)},
\end{equation}
\begin{equation}
\|g\|_{C(\bar{U})}\leq C\|g\|^{q(r-3)/(3r+q(r-3))}_{L^q(U)}\|\nabla g\|^{3r/(3r+q(r-3))}_{L^r(U)}+C_2\|g\|_{L^q(U)}.
\end{equation}
Moreover, if $f\cdot n|_{\partial U} = 0$, we can choose $C_1 = 0$. Similarly, the constant $C_2 = 0$ provided $g\cdot n|_{\partial U} = 0$.
\end{lemma}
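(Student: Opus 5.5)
The statement is the Gagliardo--Nirenberg inequality on a bounded Lipschitz domain $U\subset\mathbb{R}^3$. The proof reduces it to the whole-space inequality via an extension operator; the refinements $C_1=0$, $C_2=0$ then come from a Poincaré-type argument using the vanishing normal trace.

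\textbf{Step 1: extension.} Since $U$ is bounded and Lipschitz, Stein's universal extension operator $E$ is available. It is bounded simultaneously $L^2(U)\to L^2(\mathbb{R}^3)$, $H^1(U)\to H^1(\mathbb{R}^3)$, $L^q(U)\to L^q(\mathbb{R}^3)$ and $W^{1,r}(U)\to W^{1,r}(\mathbb{R}^3)$. The operator norms depend only on the exponents and the Lipschitz character of $U$. We may also multiply by a fixed cutoff so that $Ef$ is supported in a fixed ball.

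\textbf{Step 2: the inequality for $f$.} For $f\in H^1(U)$, the whole-space Gagliardo--Nirenberg inequality (interpolation between $L^2$ and $L^6$ plus Sobolev) gives
$$\|Ef\|_{L^p(\mathbb{R}^3)}\le C\|Ef\|_{L^2}^{\frac{6-p}{2p}}\|\nabla Ef\|_{L^2}^{\frac{3p-6}{2p}}.$$
Bounding $\|\nabla Ef\|_{L^2}\le C(\|\nabla f\|_{L^2(U)}+\|f\|_{L^2(U)})$ and using $(a+b)^\theta\le a^\theta+b^\theta$ for $\theta\le1$ yields the first inequality. The lower-order term with $\|f\|_{L^2(U)}$ produces $C_1\|f\|_{L^2(U)}$, since the exponents on $\|f\|_{L^2}$ sum to $1$.

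\textbf{Step 3: the inequality for $g$.} Morrey's inequality combined with the $L^q$--$W^{1,r}$ interpolation on $\mathbb{R}^3$ gives
$$\|Eg\|_{L^\infty}\le C\|Eg\|_{L^q}^{1-\theta}\|\nabla Eg\|_{L^r}^{\theta},\qquad \theta=\frac{3r}{3r+q(r-3)}.$$
Here $\theta$ is fixed by scaling. The inequality itself is proved by estimating $|g(x)-\bar g_{B}|$ on a ball of radius $s$ via Morrey, bounding $|\bar g_B|\le Cs^{-3/q}\|g\|_{L^q}$, and optimizing in $s$. Handling the lower-order terms from the extension exactly as in Step 2 gives the second inequality; continuity up to $\bar U$ follows since $r>3$.

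\textbf{Step 4: removing the constants (the main obstacle).} When $f\cdot n=0$ on $\partial U$ (for vector-valued $f$), one has to eliminate the $L^2$ remainder. The intended route is a Poincaré-type bound $\|f\|_{L^p}\lesssim\|\nabla f\|_{L^2}$, obtained by the usual contradiction/compactness argument. The difficulty is that fields with $f\cdot n=0$ include nontrivial constants-like fields only when $U$ has rotational symmetry, so some care is needed in identifying the kernel.

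The plan is to cite the vanishing-trace version in \cite{10}: for $p\in[2,6]$, write $\|f\|_{L^p}\le \|f\|_{L^2}^{\frac{6-p}{2p}}\|f\|_{L^6}^{\frac{3p-6}{2p}}$, and use $\|f\|_{L^6}\le C\|\nabla f\|_{L^2}$, which holds under the trace condition. The $C_2=0$ case is analogous, using $\|g\|_{L^q}$-controlled Poincaré.
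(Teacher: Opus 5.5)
The paper gives no proof of this lemma; it quotes it from \cite{10}. Your Steps 1--3 are the standard self-contained route to the versions with $C_1,C_2$: Stein extension, the whole-space Gagliardo--Nirenberg/Morrey inequality, and $(a+b)^\theta\le a^\theta+b^\theta$. They are correct, with one small repair in Step 3. Extending $g$ in $W^{1,r}$ produces the remainder $\|g\|_{L^r(U)}$, which $\|g\|_{L^q(U)}$ does not control when $q<r$; also, $g\in L^q\cap D^{1,r}$ is not a priori in $W^{1,r}$. Extend $g-\bar g_U$ instead. Poincar\'e--Wirtinger gives $\|g-\bar g_U\|_{W^{1,r}(U)}\le C\|\nabla g\|_{L^r(U)}$, and you also have $\|g-\bar g_U\|_{L^q(U)}\le 2\|g\|_{L^q(U)}$ and $|\bar g_U|\le |U|^{-1/q}\|g\|_{L^q(U)}$. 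Together these give exactly $C\|g\|_{L^q}^{1-\theta}\|\nabla g\|_{L^r}^{\theta}+C_2\|g\|_{L^q}$.

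In Step 4 the obstacle you describe does not exist, so you do not need to fall back on the citation. On a connected $U$ the kernel of the full gradient is the constant vectors. A constant $c$ with $c\cdot n=0$ on $\partial U$ vanishes for every bounded $U$, because $0=\int_{\partial U}(c\cdot x)(c\cdot n)\,dS=\int_U\div\big((c\cdot x)c\big)\,dx=|c|^2|U|$. Rotations $a\times x$ lie in the kernel of the symmetric gradient $\mathcal{D}$ and matter only for Korn-type inequalities, not here.

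Hence your contradiction--compactness argument closes. It gives $\|f\|_{L^2(U)}\le C\|\nabla f\|_{L^2(U)}$, and in $W^{1,r}$ it gives $\|g\|_{L^r(U)}\le C\|\nabla g\|_{L^r(U)}$. Morrey then yields $\|g\|_{L^q(U)}\le C\|g\|_{W^{1,r}(U)}\le C\|\nabla g\|_{L^r(U)}$. Writing $\|f\|_{L^2}=\|f\|_{L^2}^{\frac{6-p}{2p}}\|f\|_{L^2}^{\frac{3p-6}{2p}}$ and $\|g\|_{L^q}=\|g\|_{L^q}^{1-\theta}\|g\|_{L^q}^{\theta}$ absorbs the remainders.

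The constant obtained this way depends on $U$ itself, not only on its local Lipschitz character. On a cylinder of length $L$, the field $f=(\sin(\pi x_1/L),0,0)$ makes the ratio of the two sides grow like $L^{(p-2)/p}$. The paper, however, applies the lemma on $\Omega_R$ with $R$-independent constants. To get that, split with the cutoff $\eta$ of \eqref{rro1}. On $B_R$ the $C_1=0$ and $C_2=0$ inequalities are dilation invariant, so their constants are those on $B_1$. Compactness is then needed only on the fixed set $\Omega_0$, where $c\cdot n=0$ on $\partial D$ alone already forces $c=0$ by the same divergence identity.
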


The following lemma, whose proof is established in \cite{Aramaki2014Lp, Cai2023},   plays a crucial role in our analysis.  
\begin{lemma}\label{l22}
For any $u\in W^{1,p}(\Omega_R)$ with  $p\in(1,\infty)$, there exists some positive constant $C$ depending only on $p$ and $R_0$ such that
\begin{itemize}
    \item When $u\cdot n=0$ on $\partial\Omega_R$, one has
\begin{align}\label{uwkq}
\|\nabla u\|_{L^{p}(\Omega_R)}\leq C\bigr(\|\div u\|_{L^{p}(\Omega_R)}+\|\curl u\|_{L^{p}(\Omega_R)}+\|u\|_{L^p(\Omega_{0})}\bigr),
\end{align}
\item when $u\cdot n \neq 0$ on $\partial\Omega_R$, one has
\begin{align}\label{uwkq1}
\|\nabla u\|_{L^{p}(\Omega_R)}\leq C\bigr(\|\div u\|_{L^{p}(\Omega_R)}+\|\curl u\|_{L^{p}(\Omega_R)}+\|u\|_{L^{p}(\Omega_0)}+\|u\cdot n\|_{W^{1-1/p,p}(\partial\Omega_R)}\bigr).
\end{align}
\end{itemize} 
\end{lemma}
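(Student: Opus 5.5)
The plan is to prove Lemma \ref{l22} by the standard decomposition of a vector field into a normal-trace lifting, a harmonic part, and a part controlled by $\div$ and $\curl$. The essential difficulty is that the constant must depend only on $p$ and $R_0$, and not on $R$. Both statements have the same structure, so we first prove \eqref{uwkq1}; the case $u\cdot n=0$ on $\partial\Omega_R$ then gives \eqref{uwkq}, because the boundary term vanishes.

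\textbf{Step 1: localization.} We work with the two pieces $\Omega_0$ and $\Omega_R\setminus \bar B_{R_0}$ separately. Fix a smooth cutoff $\eta$ with $\eta\equiv1$ on $B_{R_0+1/2}$ and $\operatorname{supp}\eta\subset B_{2R_0}$, and $|\nabla\eta|\le C(R_0)$. Split $u=\eta u+(1-\eta)u$.

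\textbf{Step 2: the near-boundary piece.} The function $\eta u$ lives in the fixed bounded smooth domain $\Omega_0$. There we can use the classical div-curl estimate, with a lower order term, from \cite{Aramaki2014Lp}:
\begin{equation*}
\|\nabla v\|_{L^p(\Omega_0)}\le C\bigl(\|\div v\|_{L^p}+\|\curl v\|_{L^p}+\|v\|_{L^p}+\|v\cdot n\|_{W^{1-1/p,p}(\partial\Omega_0)}\bigr).
\end{equation*}
This estimate holds on bounded domains regardless of the Betti numbers, since the lower order term absorbs the finite-dimensional harmonic fields. Apply it to $v=\eta u$. Its normal trace on $\partial D$ is $u\cdot n$, and it vanishes on $\partial B_{2R_0}$. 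Moreover $\div(\eta u)=\eta\div u+\nabla\eta\cdot u$, and similarly for $\curl$. The commutator terms are therefore bounded by $C\|u\|_{L^p(\Omega_0)}$.

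\textbf{Step 3: the far piece.} Set $w=(1-\eta)u$, which is supported in $B_R\setminus B_{R_0+1/2}$. Extend $w$ by zero inside $B_{R_0}$ to get a field on $B_R$. Its normal trace on $\partial B_R$ is $u\cdot n$ there; the inner boundary is gone. A ball is simply connected with connected boundary, so the div-curl estimate on $B_R$ holds without a lower order term:
\begin{equation*}
\|\nabla w\|_{L^p(B_R)}\le C\bigl(\|\div w\|_{L^p}+\|\curl w\|_{L^p}+\|w\cdot n\|_{W^{1-1/p,p}(\partial B_R)}\bigr).
\end{equation*}
By scaling $x\mapsto x/R$, the constant is independent of $R$. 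The $\div$ and $\curl$ terms and the boundary seminorm are scale invariant in the homogeneous sense. If the inhomogeneous trace norm causes a problem, we subtract the mean, or work in the homogeneous trace seminorm, which is dominated by the full norm.

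The commutators $\nabla\eta\cdot u$ and $\nabla\eta\times u$ are again supported in $\Omega_0$ and are bounded by $C\|u\|_{L^p(\Omega_0)}$. Adding Steps 2 and 3 gives \eqref{uwkq1}.

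\textbf{Main obstacle.} The delicate point is Step 3: we need an $R$-uniform estimate on $B_R$ with no $L^p$ lower order term. Equivalently, we must rule out nonzero $W^{1,p}$ fields on a ball that are divergence free, curl free and have zero normal trace. Such a field is $\nabla\phi$ with $\phi$ harmonic and $\partial_n\phi=0$, hence $\phi$ is constant and the field is zero. This uniqueness is what allows a compactness argument on the unit ball to give the estimate with no $L^p$ term, and scaling then makes the constant uniform in $R$.

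A secondary issue is the trace term. In case \eqref{uwkq1}, scaling the $W^{1-1/p,p}(\partial B_R)$ norm produces only favorable powers of $R$ for the lower order part, since $R\ge1$. It therefore suffices to use the homogeneous seminorm, which is bounded by the full norm.
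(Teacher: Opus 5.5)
Your proposal is correct and follows essentially the paper's route: split $u=\eta u+(1-\eta)u$, apply the bounded-domain div--curl estimate with a lower-order term to $\eta u$ on the fixed domain $\Omega_0$, apply an $R$-uniform div--curl estimate without lower-order term to $(1-\eta)u$ on $B_R$, and bound the commutators $\nabla\eta\cdot u$, $\nabla\eta\times u$ by $C\|u\|_{L^p(\Omega_0)}$. The paper instead cites Li--Li for the $R$-uniform ball estimate and Aramaki for the case $u\cdot n\neq0$, whereas you justify the former by scaling from $B_1$ (correctly noting that the lower-order part of the trace norm picks up the harmless factor $R^{1/p-1}\le 1$) and handle both cases with the same decomposition.
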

\begin{proof}
First, we introduce a cut-off function $\eta(x)\in C_c^\infty(B_{2R_0})$ satisfying 
\begin{equation}
    \label{rro1} \eta(x)=\begin{cases}1,~~~~~~~~  &|x|\leq R_0\\
    \eta(x)\in (0,1), &R_0<|x|<
    \frac{3}{2}R_0,\\ 
    0, &|x|\geq \frac{3}{2}R_0,
    \end{cases}
\end{equation}
and $|\nabla \eta(x)|<C(R_0)$. Thus, it's easy to find that $\eta u$ is a funtion defined on $\Omega_{0}$ and satisfies $\eta u\cdot n=0$ on $\partial\Omega_{0}$.  Consequently,  one obtains after using \cite[Lemma 2.5]{Cai2023} that
\begin{equation}
\begin{split}\label{bytq1}
\|\nabla(\eta u)\|_{L^p(\Omega_R)}&=\|\nabla(\eta u)\|_{L^p(\Omega_{0})}\\
&\leq C\|\curl(\eta u)\|_{L^p(\Omega_{0})}+C\|\div(\eta u)\|_{L^p(\Omega_{0})}+C\|\eta u\|_{L^p(\Omega_{0})}\\
&\leq C\|\eta \curl u\|_{L^p(\Omega_{0})}+C\|\nabla\eta\times u\|_{L^p(\Omega_{0})}+C\|\eta \div u\|_{L^p(\Omega_{0})}\\
&\quad+C\|\nabla\eta\cdot u\|_{L^p(\Omega_{0})}+C\|\eta u\|_{L^p(\Omega_{0})}\\
&\leq C\|\curl u\|_{L^p(\Omega_R)}+C\|\div u\|_{L^p(\Omega_R)}+C\|u\|_{L^p(\Omega_{0})}.
\end{split}
\end{equation}

Similarly, it holds that $(1-\eta) u$ is a function defined on $B_R$ and satisfies $(1-\eta) u\cdot n=0$ on $\partial B_R$. Then, one deduces  from \cite[Lemma 2.2]{Lili2025} that
\begin{equation}\notag
\begin{split}
\|\nabla\big((1-\eta) u\big)\|_{L^p(\Omega_R)}&=\|\nabla\big((1-\eta) u\big)\|_{L^p(B_R)}\\
&\leq C\|\curl\big((1-\eta) u\big)\|_{L^p(B_R)}+C\|\div\big((1-\eta) u\big)\|_{L^p(B_R)}\\
&\leq C\|(1-\eta) \curl u\|_{L^p(B_R)}+C\|\nabla\eta\times u\|_{L^p(B_R)}\\
&\quad+C\|(1-\eta) \div u\|_{L^p(B_R)}+C\|\nabla\eta\cdot u\|_{L^p(B_R)}\\
&\leq C\|\curl u\|_{L^p(\Omega_R)}+C\|\div u\|_{L^p(\Omega_R)}+C\|u\|_{L^p(\Omega_{0})},
\end{split}
\end{equation}
which together with \eqref{bytq1} yields \eqref{uwkq}.

With the similar arguments as those in  \cite{Aramaki2014Lp},  we can also prove \eqref{uwkq1} and complete the proof of Lemma \ref{l22}.
\end{proof}

%The following $L^p$-bound for two elliptic systems,
Next, considering the Lam\'e's system
\begin{equation}\label{2.3}
\begin{cases}
		\mathcal{L}u=f,~~~~~&x\in \Omega_R,\\
		u\cdot n = 0,~\mathrm{curl}u\times n=-Au,~~&x\in\partial \Omega,\\
		u\cdot n=0,~\mathrm{curl}u\times n=0, & x\in\partial B_R,
	\end{cases}
\end{equation}
thanks to \cite{Cai2023,Lili2025}, we have the following conclusions on the solutions to \eqref{2.3}.
\begin{lemma}\label{elliptic_estimate}
Let $u$ be a smooth solution of the Lam\'e system \eqref{2.3},   for any $p\in(1,\infty)$ and $k\geq 0$, there exists a positive constant $C$ depending only on $\lambda$, $\mu$, $A$, $R_0$, $p$, and $k$ such that
\begin{equation}\label{key}
\|\nabla^{k+2}u\|_{L^p(\Omega_R)}\leq C\|f\|_{W^{k,p}(\Omega_R)}+C\|\na u\|_{L^2(\Omega_R)}.
\end{equation}
\end{lemma}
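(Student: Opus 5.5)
The statement to prove is Lemma \ref{elliptic_estimate}: for the Lamé system \eqref{2.3} on the annular domain $\Omega_R$, the higher-order bound \eqref{key} holds with a constant independent of $R$. I would reduce it to two known estimates by the same cut-off decomposition used in the proof of Lemma \ref{l22}. The first is the bounded-domain estimate of \cite{Cai2023}, for the inner boundary $\partial\Omega$ with the Navier-slip condition. The second is the whole-space (or large-ball) estimate of \cite{Lili2025}, for the outer sphere $\partial B_R$ with $u\cdot n=0$, $\curl u\times n=0$.

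Let $\eta$ be the cut-off \eqref{rro1}. Split $u=\eta u+(1-\eta)u$ and treat the two pieces separately.

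\textbf{Inner piece.} The function $v_1=\eta u$ is supported in $\Omega_0$ and satisfies on $\partial\Omega$ the same boundary conditions as $u$, because $\eta\equiv1$ near $\partial\Omega$. It also vanishes near $\partial B_{2R_0}$. It solves
\[
\mathcal{L}v_1=\eta f+[\mathcal{L},\eta]u ,
\]
where the commutator involves only $\nabla\eta\,\nabla u$ and $\nabla^2\eta\, u$. The $W^{k+2,p}$ estimate of \cite{Cai2023} on the fixed bounded domain $\Omega_0$ then gives
\[
\|\nabla^{k+2}v_1\|_{L^p}\le C\|f\|_{W^{k,p}(\Omega_R)}+C\|u\|_{W^{k+1,p}(\Omega_0\setminus B_{R_0})}+C\|v_1\|_{L^p(\Omega_0)}.
\]
The constant depends only on $R_0$, $A$, $\lambda$, $\mu$, $p$, $k$.

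\textbf{Outer piece.} The function $v_2=(1-\eta)u$ vanishes near $\partial\Omega$ and can be regarded as a function on $B_R$ satisfying $v_2\cdot n=0$, $\curl v_2\times n=0$ on $\partial B_R$. It solves $\mathcal{L}v_2=(1-\eta)f-[\mathcal{L},\eta]u$. I would apply \cite[Lemma 2.4 / elliptic lemma]{Lili2025} for balls, which is scale-uniform in $R$ because it is proved via div-curl estimates (as in Lemma \ref{l22}) and has no zero-order term. This gives the same type of bound with constants independent of $R$.

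\textbf{Lower-order terms.} What remains is to control $\|u\|_{W^{k+1,p}(\Omega_0\setminus B_{R_0})}$ and $\|u\|_{L^p(\Omega_0)}$ by $\|f\|_{W^{k,p}}+\|\nabla u\|_{L^2}$. On the fixed annulus $\{R_0<|x|<2R_0\}$ I would use interior elliptic regularity for $\mathcal{L}u=f$, which gives
\[
\|u\|_{W^{k+1,p}}\le C\bigl(\|f\|_{W^{k,p}}+\|u\|_{L^1(\Omega_0)}\bigr)
\]
on a slightly smaller annulus. For this I would choose the support of $\nabla\eta$ strictly inside, which is possible since $R>2R_0+1$. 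The $L^1$ or $L^2$ norm of $u$ on the bounded set $\Omega_0$ is the point where the hypothesis $u\cdot n=0$ is essential. Using $\|u\|_{L^6(\Omega_R)}\le C\|\nabla u\|_{L^2(\Omega_R)}$ would require a uniform Sobolev inequality on $\Omega_R$, which fails if $u$ need not decay. Instead I would use a Poincaré-type inequality on $\Omega_0$ for fields tangent to $\partial\Omega$, namely $\|u\|_{L^2(\Omega_0)}\le C\|\nabla u\|_{L^2(\Omega_0)}$, argued by contradiction/compactness. A constant field tangent to the closed surface $\partial\Omega$ of a bounded domain must vanish, so the only kernel element is zero.

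Finally, interpolation (or Gagliardo-Nirenberg, Lemma \ref{G-N}) lets one absorb intermediate-order terms, and summing the inner and outer estimates yields \eqref{key}. I expect the main obstacle to be verifying that the ball estimate for the outer piece is genuinely $R$-independent, i.e.\ that its constant does not degenerate as $R\to\infty$. I would check this by rescaling to the unit ball, where the homogeneous boundary conditions on $\partial B_R$ are scale-invariant and the boundary curvature terms only improve, with $1/R$ factors.
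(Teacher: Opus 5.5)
Your argument is correct and uses the same skeleton as the paper. You take the cut-off $\eta$ of \eqref{rro1}, apply the bounded-domain estimate of \cite{Cai2023} to $\eta u$ on $\Omega_0$, and apply the $R$-uniform ball estimate of \cite{Lili2025} (the paper cites its Lemma 2.3) to $(1-\eta)u$ on $B_R$.

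Where you differ is in the commutator and zero-order terms. The paper bounds $\|\tilde f\|_{W^{k,p}}$ and $\|\eta u\|_{L^p(\Omega_0)}$ by interpolation (Lemma \ref{G-N} with $u\cdot n=0$). This produces $\frac14\|u\|_{W^{k+2,p}(\Omega_R)}$ on the right of both pieces, which the paper absorbs after summing. As written this is loose. The left side of \eqref{key} contains only $\nabla^{k+2}u$, and the full $W^{k+2,p}(\Omega_R)$ norm, which includes $\|u\|_{L^p(\Omega_R)}$, is not bounded uniformly in $R$. Only its restriction to the fixed set $\Omega_0$ is actually needed.

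You instead control $\|u\|_{W^{k+1,p}}$ on the transition annulus by interior regularity of the constant-coefficient Lam\'e system. Every remaining zero-order quantity then reduces to $\|u\|_{L^2(\Omega_0)}\le C\|\nabla u\|_{L^2(\Omega_0)}$. This is the compactness Poincar\'e inequality for fields tangent to $\partial\Omega$, valid because the normals of the closed surface $\partial D$ span $\mathbb{R}^3$. The only absorption left, of $\|\eta u\|_{L^p(\Omega_0)}$ when $p>6$, is an Ehrling-type step on a fixed domain. Your route costs one extra interior-regularity step, but every absorption becomes local and visibly $R$-independent; the paper's version is shorter but needs this repair to be airtight.

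Two side remarks.

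\emph{The uniform Sobolev inequality.} Your aside that an $R$-uniform Sobolev inequality fails on $\Omega_R$ is not right. Tangency on $\partial B_R$ makes $\|u\|_{L^6(B_R)}\le C\|\nabla u\|_{L^2(B_R)}$ scale-invariant, and your Poincar\'e inequality handles the inner part. Your proof simply does not need it.

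\emph{Checking the ball estimate by rescaling.} Do this for the version with no lower-order term. That version exists because the homogeneous problem on $B_1$ has only the trivial solution, by the energy identity plus simple connectivity. If you instead rescale a term $\|\nabla v\|_{L^2(B_1)}$, you get a factor $R^{3/p-k-5/2}$, which grows when $k=0$ and $p<6/5$.
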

\begin{proof}
For $\eta$ defined in \eqref{rro1}, it holds that $\eta u$ satisfies
%$$\mathcal{L}(\eta u)=\mu\Delta u+(\mu+\lambda)\nabla \mathrm{div}u$$
\be\ba%\label{stokes2}
\begin{cases}\nonumber
\mathcal{L}(\eta u)=\eta f+\tilde{f}, \,\, &x\in \Omega_{0}, \\
		\eta u\cdot n = 0,\ \mathrm{curl}(\eta u)\times n=-Au,\ \
\,\, &x\in\partial \Omega,\\
		\eta u\cdot n = 0,\ \mathrm{curl}(\eta u)\times n=0,\ \
\,\, &x\in\partial B_{2R_0},
\end{cases}
\ea\ee
with
$$\tilde{f}\triangleq\mu u\Delta \eta +2\mu\nabla\eta\cdot\nabla u+(\mu+\lambda)\left(u\nabla^2\eta +\nabla u\cdot\nabla\eta+\nabla\eta\div u\right).$$

Consequently,  one obtains from  \cite[Lemma 2.4]{Cai2023} that
\be\la{vpiq1}\ba
\|\eta u\|_{W^{k+2,p}(\Omega_R)}&=\|\eta u\|_{W^{k+2,p}(\Omega_{0})}\\
&\leq C(\|\eta f\|_{W^{k,p}(\Omega_{0})}+\|\tilde{f}\|_{W^{k,p}(\Omega_{0})}+\|\eta u\|_{L^{p}(\Omega_{0})})\\
&\leq C(\| f\|_{W^{k,p}(\Omega_R)}+\|\na u\|_{L^2(\Omega_R)})+\frac{1}{4}\|u\|_{W^{k+2,p}(\Omega_R)},
\ea\ee
where we have used $u\cdot n=0$ on $\partial\Omega_R$ and Lemma \ref{G-N}.

Next, we can check that $(1-\eta) v$ satisfies
%$$\mathcal{L}(\eta u)=\mu\Delta u+(\mu+\lambda)\nabla \mathrm{div}u$$
\be\ba
\begin{cases}\nonumber
\mathcal{L}\bigr((1-\eta) u\bigr)=(1-\eta) f+\tilde{f}, \,\, &x\in B_R, \\
		(1-\eta) u\cdot n = 0,\ \mathrm{curl}\bigr((1-\eta) u\bigr)\times n=0,\ \
\,\, &x\in\partial B_R,
\end{cases}
\ea\ee
and thus derive from \cite[Lemma 2.3]{Lili2025} that
\be\ba\nonumber
\|(1-\eta) u\|_{W^{k+2,p}(\Omega_R)}
\leq C(\| f\|_ {W^{k,p}(\Omega_R)}+\|\na u\|_{L^2(\Omega_R)})+\frac{1}{4}\|u\|_{W^{k+2,p}(\Omega_R)}.\ea\ee
This together with \eqref{vpiq1} yields \eqref{key} and finished the proof of \ref{elliptic_estimate}.
\end{proof}

%\begin{remark}
%For general external domains $\Omega$, the results of the Lemmas \ref{G-N}-\ref{elliptic_estimate} also hold.%, see \cite{liu2025} for details.
%\end{remark}

Next, in order to obtain the estimate of the boundary terms, we need the following lemma to establish the positive lower bound of the density near the boundary.

\begin{lemma}\label{087}
Assume that $U\subset\mathbb{R}^3$ is a bounded smooth domain satisfying the interior sphere condition, if $\rho\in C(\overline U)$ and 
$\nabla\rho^{-a}\in L^p(U)$ with  $a>0,\ p>2$, 
%then there exists a positive constant $\hat{\rho}$ depending on $U,a,p$, and $\rho$ such that
%$$\rho>\hat{\rho},~~~x\in \overline{U}.$$
then $\rho$ has a positive lower bound  on $\overline U$.
\end{lemma}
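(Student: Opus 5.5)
The plan is to work with $f\triangleq\rho^{-a}$ and show that it is bounded above on $\overline U$; then $\rho=f^{-1/a}$ is bounded below by a positive constant. The hypothesis $\nabla f\in L^p(U)$ with $p>2$ only lets us integrate $\nabla f$ along curves in a controlled way. So we use the Morrey / Sobolev-type oscillation estimate on balls. The hypothesis $p>2$ is weaker than the usual $p>3$ threshold in 3D, so we must be careful. The interior sphere condition and the continuity of $\rho$ have to do the remaining work.

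\textbf{Step 1: a ball where $f$ is finite.} Since $\rho\in C(\overline U)$ and $\rho\ge0$, the set $\{\rho>0\}$ is relatively open. If it were empty, then $f\equiv+\infty$, which is incompatible with $\nabla f$ being an $L^p$ function (the statement implicitly presumes $f\in L^1_{loc}$). Hence there is a ball $B_0\subset U$ on which $\rho\ge c_0>0$, so $f\le c_0^{-a}$ on $B_0$.

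\textbf{Step 2: $f$ is finite and bounded on $U$.} Suppose $\rho(x_0)=0$ for some $x_0\in\overline U$. Then $f(x)\to+\infty$ as $x\to x_0$, uniformly by continuity. We join the good ball $B_0$ to a small neighbourhood of $x_0$ by a chain of balls, or by a cone or tube. At boundary points this uses the interior sphere condition: there is a ball $B\subset U$ whose closure contains $x_0$, and we connect through it.

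On each ball $B_r$ we apply the Poincaré inequality $|f_{B_r}-f_{B_{r/2}}|\le C r^{1-3/p}\|\nabla f\|_{L^p(B_r)}$. Summing over a dyadic chain shrinking to $x_0$ gives the series $\sum_k 2^{-k(1-3/p)}$. This converges when $p>3$, which yields boundedness of $f$ near $x_0$ and hence a contradiction.

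For $2<p\le3$, dyadic summation alone fails. The fix is to exploit that $f\ge M$ on a whole small ball $B_\varepsilon(x_0)\cap U$, with $M\to\infty$ as $\varepsilon\to0$. We then view $f$ on the region $\{\varepsilon<|x-x_0|<r_1\}\cap B$. Its values rise from at most $c_0^{-a}$ (after chaining from $B_0$) to at least $M$ across this annular region. A capacity-type estimate in the annulus then applies: since $U$ is bounded, Hölder reduces it to $p=2$, giving $(M-C)^p\,\mathrm{cap}_p(B_\varepsilon,B_{r_1})\le\|\nabla f\|_{L^p}^p$. On the truncated cone or ball from the interior sphere condition, $\mathrm{cap}_p$ of a small ball stays bounded below like $\varepsilon^{3-p}$. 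This bound combined with the rate at which $M(\varepsilon)\to\infty$ is the delicate point.

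\textbf{Main obstacle.} The main obstacle is precisely this regime $2<p\le3$. What I would try first is the following. Restrict $f$ to generic 2D planar slices through $x_0$ lying inside the interior ball. On a 2D slice, $p>2$ is exactly the Morrey exponent, so by Fubini almost every slice has $\nabla f\in L^p$. Morrey's embedding in 2D then gives $f$ Hölder, hence bounded, on that slice up to $x_0$. This contradicts $f\to\infty$ at $x_0$ along the slice, which holds by the uniform blow-up from continuity of $\rho$. Having excluded zeros, $\rho$ is a continuous positive function on the compact set $\overline U$, so it attains a positive minimum. This gives the lemma.
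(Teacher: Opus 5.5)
\textbf{Genuine gap.} Your treatment of $2<p\le 3$ fails, and it cannot be repaired, because in $\mathbb{R}^3$ the statement is false in that range. Take $U=B_1(0)$ and $\rho(x)=|x|^{s}$ with $0<as<\frac3p-1$, which is possible for $2<p<3$. Then $\rho\in C(\overline U)$, $\rho^{-a}=|x|^{-as}\in L^1_{loc}(U)$, and $|\nabla\rho^{-a}|=as|x|^{-as-1}\in L^p(U)$ since $p(as+1)<3$. Yet $\rho(0)=0$. For $p=3$, use $\rho^{-a}=\log\log(e^2/|x|)$, whose gradient has size $\bigl(|x|\log(e^2/|x|)\bigr)^{-1}\in L^3(U)$.

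Your slicing step is exactly where this gets hidden. Fubini gives $\nabla f|_P\in L^p$ only for almost every plane $P$ of a \emph{parallel} family, and such a family contains a single plane through $x_0$. The planes through a fixed point do not foliate space. Averaging over them produces, up to a constant, $\int|\nabla f|^p|x-x_0|^{-1}\,dx$, which need not be finite. In the example, on every plane through $0$ the restriction is $|y|^{-as}$ with $y\in\mathbb{R}^2$, and its gradient lies in $L^p$ for no $p>2$. Likewise, your capacity inequality $(M-C)^p\varepsilon^{3-p}\lesssim\|\nabla f\|_{L^p}^p$ only caps the blow-up rate at $M(\varepsilon)\lesssim\varepsilon^{-(3-p)/p}$. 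The example respects this cap, so no contradiction can come from it.

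What is true, and all the paper uses, is the case $p>3$: the lemma is only applied to $\rho^{\frac{\delta-1}{2}}$ (so $a=\frac{1-\delta}{2}$) with $p=6$. For $p>3$, your dyadic chaining amounts to Morrey's estimate $|f(x)-f(x_0)|\le C\|\nabla f\|_{L^p(B)}|x-x_0|^{1-3/p}$ on an interior ball $B$ whose closure contains the given point. Combine this with a point $x_0\in B$ where $\rho$ is bounded below. The paper takes a point where $\rho$ equals its average over $B$; that average is positive because $\rho^{-a}\in L^1_{loc}$ forces $\rho>0$ a.e. This bounds $f=\rho^{-a}$ near every point of $\overline U$, and continuity plus compactness finish the proof. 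That is the paper's argument.

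Note that the paper writes the exponent as $1-\frac2p$, which is the two-dimensional Morrey exponent. The three-dimensional estimate needs $1-\frac3p$ and $p>3$, so the paper's proof is likewise valid only for $p>3$. You should state and prove the lemma for $p>3$ and drop the $2<p\le3$ discussion rather than try to complete it.
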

\begin{proof}
%First, since $\partial  U$ is smooth, $\Omega$ satisfies interior sphere condition, therefore 
For any $x\in \overline U$, there exists a ball $B_x\subset \overline U$ such that $x\in B_x$. 
Notice that $\nabla\rho^{-a}\in L^p(U)$, 
it is impossible for $\rho$ to vanish on a subset of positive measure. Then there exists $x_0\in B_x$ such that
$$
\rho(x_0)=\frac{1}{|B_x|}\int_{B_x}\rho dx>\tilde C_1>0,
$$
and thus $\rho^{-a}(x_0)\leq \tilde C_2$.

Next, by Sobolev's embedding theorem (\cite[Chapter 5]{Evans2010}), it holds that for any $x, x_0\in \overline B_x$,
$$
\rho^{-a}(x)\leq \rho^{-a}(x_0)+C\|\nabla\rho^{-a}\|_{L^p(B_x)}|x-x_0|^{1-\frac2p}\leq \tilde C_3,
$$
where $\tilde C_3>0)$. This along with the arbitrary of $x\in \overline U$ thus implies  \begin{equation}\label{xiajie}
    \rho(x)>0,~~~x\in \overline U.
\end{equation}
Recalling that  $\rho\in C(\overline U)$, the continuity of $\rho$ together with \eqref{xiajie} yields that   $\rho(x)$ has a positive lower bound over $\overline U$. This completes the proof of Lemma \ref{087}.
\end{proof}

%\begin{remark}
%In order to satisfy the inner sphere condition of $\Omega$, the smoothness of the boundary can be reduced to $C^2$.
%\end{remark}

\section{A priori estimate}
In this section, for $1\leq p\leq\infty$ and positive integer $k$, we denote
\begin{equation*}
	\int f \mathrm{dx}=\int_{\Omega_R} f \mathrm{dx},\ L^p=L^p(\Omega_R),\ W^{k,p} = W^{k,p}(\Omega_R),\ D^{k}=D^{k,2}(\Omega_R),\ H^k=H^k(\Omega_R).
\end{equation*}

We start with the following local well-posedness result which can be proved in the same way as \cite{3,4,5}, where the initial density is strictly away from vacuum.
%First,   the following local existence theory on $\Omega_R$, with $R>2R_0+1$, where the initial density is strictly away from vacuum, can be shown by similar arguments as in \cite{3,4,5}.
\begin{lemma}\label{l25}
Assume that the initial data $(\rho_0,u_0)$ satisfies
	\begin{equation}\label{2.16}
\begin{split}
    &\rho_0\in H^3(\Omega_R),\ u_0\in H^3(\Omega_R),\ \inf_{x\in \Omega_R}\rho_0(x)>0,\\
    &u_0\cdot n=0,\ \mathrm{curl}u_0\times n=-Au_0,\ x\in\partial \Omega,\\
    &u_0\cdot n=0,\ \mathrm{curl}u_0\times n=0,\ x\in\partial B_R.
\end{split}
	\end{equation}
Then there exist a small time $T_R$ and a unique classical solution $(\rho, u)$ to the following IBVP
	\begin{equation}\label{9}
		\left\{ \begin{array}{l}
			\rho_t+\mathrm{div}(\rho u)=0,\\
			\rho^{1-\delta}u_t+\rho^{1-\delta}u\cdot\nabla u-\mathcal{L}u+\frac{a\gamma}{\gamma-\delta}\nabla\rho^{\gamma-\delta}=\delta\nabla\log\rho\cdot \mathcal{S}(u),\\
			u\cdot n=0,\ \mathrm{curl}u\times n=-Au,\ x\in \partial\Omega,\ t>0,\\
			u\cdot n=0,\ \mathrm{curl}u\times n=0,\ x\in \partial B_R,\ t>0,\\
			(\rho, u)(x, 0) = (\rho_0, u_0)(x),\ x\in \Omega_R,
		\end{array} \right.
	\end{equation}
on $\Omega_R\times(0,T_R)$ such that
	\begin{equation}
		\left\{ \begin{array}{l}
			\rho\in C([0,T_R]; H^3),\ \nabla\log\rho\in C([0,T_R]; H^2),\\
		    u\in C([0,T_R]; H^3)\cap L^2(0, T_R; D^4),\\
			u_t \in L^\infty(0, T_R; H_0^1)\cap L^2(0, T_R; H^2),\ \rho^\frac{1-\delta}{2}u_{tt}\in L^2(0, T_R; L^2),\\
			\sqrt tu\in L^\infty(0, T_R; D^4),\ \sqrt tu_t \in L^\infty(0, T_R; D^2),\ \sqrt tu_{tt} \in L^2(0, T_R; H^1),\\
			\sqrt t\rho^\frac{1-\delta}{2} u_{tt} \in L^\infty(0, T_R; L^2),\ tu_t \in L^\infty(0, T_R; D^3),\\
			tu_{tt} \in L^\infty(0, T_R; H^1) \cap L^2(0, T_R; D^2),\ t\rho^\frac{1-\delta}{2} u_{ttt} \in L^2(0, T_R; L^2),\\
			t^{3/2}u_{tt} \in L^\infty(0, T_R; D^2),\ t^{3/2}u_{ttt} \in L^2(0, T_R; H^1),\\
			t^{3/2}\rho^\frac{1-\delta}{2} u_{ttt}\in L^\infty(0, T_R; L^2).
		\end{array} \right.
	\end{equation}
%where we denote $L^2 = L^2(\Omega_R)$, $D^k = D^k(\Omega_R)$, $H^k = H^k(\Omega_R)$ and $W^{k,p} = W^{k,p}(\Omega_R)$ for positive integer $k$.
\end{lemma}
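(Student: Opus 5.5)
Since the initial density is bounded below by a positive constant on the bounded domain $\Omega_R$, system \eqref{9} is a uniformly parabolic–hyperbolic system: after multiplying the momentum equation by $\rho^{\delta-1}$, it becomes
$$u_t+u\cdot\nabla u-\rho^{\delta-1}\mathcal{L}u=F(\rho,\nabla\rho,\nabla u),$$
with a strictly positive, bounded viscosity coefficient. I would prove the lemma by the standard linearization and iteration scheme of \cite{3,4,5}, adapted to the Navier-slip conditions on $\partial\Omega$ and $\partial B_R$.

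\textbf{Linearized problem.} Fix $v$ in the target class, with $v(0)=u_0$ and $v$ satisfying the boundary conditions. First solve the transport equation $\rho_t+\div(\rho v)=0$ by characteristics. Since $v\cdot n=0$ on $\partial\Omega_R$, no boundary data are needed. This gives $\rho\in C([0,T];H^3)$ with the lower bound
$$\rho\geq \inf\rho_0\,\exp\Bigl(-\int_0^t\|\div v\|_{L^\infty}\,ds\Bigr)>0.$$
Next, $\nabla\log\rho\in C([0,T];H^2)$, because $\log\rho$ satisfies $(\log\rho)_t+v\cdot\nabla\log\rho=-\div v$. Then solve the linear parabolic system
$$\rho^{1-\delta}u_t+\rho^{1-\delta}v\cdot\nabla u-\mathcal{L}u=-\tfrac{a\gamma}{\gamma-\delta}\nabla\rho^{\gamma-\delta}+\delta\nabla\log\rho\cdot\mathcal{S}(v)$$
with the slip boundary conditions. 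I would do this by Galerkin approximation, using eigenfunctions of the Lamé operator with the boundary conditions of \eqref{2.3}. The boundary term produced by integrating $-\mathcal{L}u\cdot u$ by parts is $\int_{\partial\Omega}Au\cdot u$; it is controlled by the trace inequality $\|u\|_{L^2(\partial\Omega)}^2\le\varepsilon\|\nabla u\|_{L^2}^2+C_\varepsilon\|u\|_{L^2}^2$ together with the lower bound of $\rho$. Higher regularity, namely $u\in C([0,T];H^3)\cap L^2(D^4)$ and $u_t\in L^2(H^2)$, comes from differentiating in time (the compatibility condition is automatic, since $u_0\in H^3$ satisfies the boundary conditions) and from the elliptic estimate of Lemma \ref{elliptic_estimate} applied to $\mathcal{L}u=\rho^{1-\delta}(u_t+v\cdot\nabla u)+\cdots$.

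\textbf{Time-weighted estimates.} The weighted regularity ($\sqrt t\,u\in D^4$, $t u_{tt}$, $t^{3/2}u_{ttt}$, and so on) follows by differentiating the equation once, twice and three times in $t$, multiplying by $t^k$, and performing $L^2$ energy estimates, each time again using Lemma \ref{elliptic_estimate}. These weights avoid the need for higher-order compatibility conditions.

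\textbf{Contraction and the main obstacle.} I would close the argument with uniform bounds on a small interval $[0,T_R]$ for the map $v\mapsto u$ in a high norm, followed by a contraction in the low norm $\sup_t\|\rho^{\frac{1-\delta}2}(u^{k+1}-u^k)\|_{L^2}+\|\nabla(u^{k+1}-u^k)\|_{L^2_tL^2}$, with the corresponding $L^2$ bound on $\rho^{k+1}-\rho^k$. Uniqueness follows from the same difference estimate. The main obstacle is this contraction step. The source term $\delta\nabla\log\rho\cdot\mathcal{S}(v)$ loses a derivative on $\rho$, so the difference estimate needs $\nabla\log\rho^k$ bounded in $L^\infty(H^2)$, uniformly in $k$, in order to absorb the term $\nabla(\log\rho^{k+1}-\log\rho^k)\cdot\mathcal{S}$ using only an $L^2$ difference of $\nabla\log\rho$. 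The boundary integrals coming from the non-sign-definite matrix $A$ must be absorbed via the trace inequality and the positive lower bound of $\rho$ on $\Omega_R$; all constants may depend on $R$ and $\inf\rho_0$, which is allowed here.
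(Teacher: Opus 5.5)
Your route (transport for $\rho$, a linear slip problem for $u$, uniform high-norm bounds, then a contraction) is the standard scheme the paper simply defers to; the paper gives no argument of its own. However, two steps fail as written.

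\textbf{The compatibility condition is not automatic.} Suppose $u\in C([0,T_R];H^3)$ and $\rho\in C([0,T_R];H^3)$ with $\inf\rho>0$. Then the momentum equation gives $u_t\in C([0,T_R];H^1)$. Differentiating $u\cdot n|_{\partial\Omega_R}\equiv0$ in time forces $u_t(0)\cdot n=0$ on $\partial\Omega_R$, where
\[
u_t(0)=-u_0\cdot\nabla u_0+\rho_0^{\delta-1}\Bigl(\mathcal{L}u_0-\tfrac{a\gamma}{\gamma-\delta}\nabla\rho_0^{\gamma-\delta}+\delta\nabla\log\rho_0\cdot\mathcal{S}(u_0)\Bigr).
\]
This does not follow from $u_0$ satisfying the boundary conditions. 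For $u_0=0$ it reduces to $\partial_n\rho_0=0$ on $\partial\Omega_R$, which fails for instance for $\rho_0=1+|x|^2$ on $\partial B_R$. In your Galerkin scheme, this is exactly where a uniform bound on $\|\nabla u^m_t(0)\|_{L^2}$ breaks down. So the claimed regularity $u\in C([0,T_R];H^3)$, $u_t\in L^\infty(0,T_R;H^1)\cap L^2(0,T_R;H^2)$ requires this first-order compatibility hypothesis to be added, or the conclusion must be weakened (time weights already at the $H^3$ level). The statement as written omits the hypothesis, and the paper's one-line citation does not address it either.

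\textbf{The contraction does not close in your low norm.} You measure the density difference through $\|\nabla\bar\ell\|_{L^2}$ with $\bar\ell=\log\rho^{k+1}-\log\rho^k$. Differencing $(\nabla\log\rho)_t+\nabla(v\cdot\nabla\log\rho)+\nabla\mathrm{div}\,v=0$ shows that $\nabla\bar\ell$ is forced by $\nabla\mathrm{div}(u^k-u^{k-1})$. The quantity $\|\nabla(u^k-u^{k-1})\|_{L^2_tL^2}$ in your norm does not control this.

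You also cannot drop to $\|\bar\ell\|_{L^2}$ by integrating $\int\nabla\bar\ell\cdot\mathcal{S}(v)\cdot\bar u\,dx$ by parts. The slip conditions only kill $\bar u\cdot n$, so the term $\int_{\partial\Omega_R}\bar\ell\,(\mathcal{S}(v)n)\cdot\bar u\,dS$ remains, and it needs a trace of $\bar\ell$. In constant-viscosity schemes the density difference enters only through $\nabla\bar P$, whose pairing with $\bar u$ produces no boundary term; that is why the low norm suffices there.

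The fix is a difference estimate one level up, as in the paper's uniqueness proof in Section 4:
\begin{enumerate}
\item Test the difference equation also with $\bar u_t$, to control $\sup_t\|\nabla\bar u\|_{L^2}^2+\int_0^t\|\rho^{\frac{1-\delta}{2}}\bar u_t\|_{L^2}^2ds$. Handle the term $\frac{d}{dt}\int_{\partial\Omega}\bar u\cdot A\cdot\bar u\,dS$ by the trace theorem and $\inf\rho>0$.
\item Recover $\int_0^t\|\nabla^2\bar u\|_{L^2}^2ds$ from Lemma \ref{elliptic_estimate}.
\item Carry $\|\rho^{k+1}-\rho^k\|_{H^1}$ and $\|\nabla\bar\ell\|_{L^2}$ in the norm.
\end{enumerate}
Then $\|\nabla\bar\ell(t)\|_{L^2}\le C\sqrt t\,\|\nabla^2(u^k-u^{k-1})\|_{L^2_tL^2}+\cdots$ supplies the small factor. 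Your uniqueness claim needs the same upgrade.
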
 
In the rest of this section, we always assume that $ (\rho, u)$ is a solution  to the  IBVP \eqref{9} in $\Omega_R \times [0, T_R]$, which is obtained by 
Lemma \ref{l25}.  
The main aim of this section is to derive the following key a priori estimate on $\mathcal{E}$ defined by
\begin{equation}
	\begin{split}\label{qaqa}
		\mathcal{E}(t)\triangleq& 1+\|\rho^\frac{1-\delta}{2}u\|_{L^2}
		+\|\nabla u\|_{L^2}
		+\|\rho^\frac{1-\delta}{2}u_t\|_{L^2}
		+\|\rho^{\gamma-\frac{1+\delta}{2}}\|_{W^{1,6}\cap D^1\cap D^2}\\
		&+\|\nabla\rho^\frac{\delta-1}{2}\|_{L^{6}\cap D^{1}}+\|\rho^\frac{\delta-1}{2}\|_{L^{6}(\Omega_0)}.
	\end{split}
\end{equation}
\begin{proposition}\label{p1}
	Assume that $(\rho_0,u_0)$ satisfies \eqref{2.16}. Let $(\rho,u)$ be the solution to the IBVP \eqref{9} in $\Omega_R\times(0,T_R]$ obtained by Lemma \ref{l25}. Then there exist positive constants $T_0$ and $M$ both depending only on $a$, $\delta$, $\gamma$, $\mu$, $\lambda$, $A$, $R_0$, and $C_0$ such that
	\begin{equation}\label{e32}
			\sup_{0\leq t\leq T_0}\big(\mathcal{E}(t)+\|\nabla^2u\|_{L^2}\big)
            +\int_0^{T_0}\bigl(\|\rho^\frac{\delta-1}{2}\mathcal{L}u\|_{L^2}^2
            +\|\nabla u_t\|_{L^2}^2\bigr) ds
			\leq M,
	\end{equation}
    where
    \begin{equation}
    	\begin{split}
    	C_0\triangleq&1+\|\rho_0^\frac{1-\delta}{2}u_0\|_{L^2}+\|u_0\|_{D^1\cap D^2}
    		+\|\rho_0^{\gamma-\frac{1+\delta}{2}}\|_{W^{1,6}\cap D^1\cap D^2}\\
    		&+\|\nabla\rho_0^\frac{\delta-1}{2}\|_{ L^6\cap D^1}+\|\rho_0^\frac{\delta-1}{2}\mathcal{L}u_0\|_{L^2}+\|\rho_0^\frac{\delta-1}{2}\|_{L^{6}(\Omega_0)} 
    	\end{split}
    \end{equation}
is bounded constant due to \eqref{1117}, \eqref{118}, and \eqref{260114}.
\end{proposition}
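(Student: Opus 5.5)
The plan is a continuity (bootstrap) argument on $\mathcal{E}$. We derive a closed inequality of the form
\[
\mathcal{E}(t)+\|\nabla^2u\|_{L^2}+\int_0^t\bigl(\|\rho^{\frac{\delta-1}{2}}\mathcal{L}u\|_{L^2}^2+\|\nabla u_t\|_{L^2}^2\bigr)ds\le C C_0^{\beta}\exp\Bigl(C\int_0^t \mathcal{E}^{\beta}ds\Bigr),
\]
with $C,\beta$ depending only on $a,\delta,\gamma,\mu,\lambda,A,R_0$ and independent of $R$ and of $\inf\rho_0$. A Gronwall-type argument then gives $T_0$ and $M$. The estimates are obtained in the order below; each uses only quantities already controlled, plus $\mathcal{E}$.

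\emph{Velocity estimates.} The first step is the basic energy estimate. We multiply \eqref{9}$_2$ by $u$ and use $\rho^{1-\delta}$ as a weight through the mass equation. The term $\delta\nabla\log\rho\cdot\mathcal{S}(u)$ is rewritten as $\rho^{-\delta}\mathrm{div}(\rho^\delta\mathcal{S}(u))-\mathcal{L}u$. The boundary integrals $\int_{\partial\Omega}Au\cdot u$ are bounded by the trace theorem and \eqref{uwkq}, which only requires $\|u\|_{L^2(\Omega_0)}$. On $\Omega_0$ this is bounded by $\|\rho^{\frac{\delta-1}{2}}\|_{L^6(\Omega_0)}\|\rho^{\frac{1-\delta}{2}}u\|_{L^3(\Omega_0)}$, or more simply by the lower bound of $\rho$ on $\Omega_0$. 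That lower bound follows from Lemma \ref{087} together with $\|\rho^{\frac{\delta-1}{2}}\|_{L^6(\Omega_0)}+\|\nabla\rho^{\frac{\delta-1}{2}}\|_{L^6}\le\mathcal{E}$.

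Next we multiply by $u_t$ to bound $\|\nabla u\|_{L^2}$, and then differentiate \eqref{9}$_2$ in $t$ and test with $u_t$ to bound $\|\rho^{\frac{1-\delta}{2}}u_t\|_{L^2}$ and $\int\|\nabla u_t\|_{L^2}^2$. The initial value of $\|\rho^{\frac{1-\delta}{2}}u_t\|_{L^2}$ is controlled through the compatibility term $\|\rho_0^{\frac{\delta-1}{2}}\mathcal{L}u_0\|_{L^2}$ in $C_0$. The boundary terms here again carry no sign, since $A$ is not definite. We handle them by the trace inequality and absorption, converting $\|u_t\|_{L^2(\Omega_0)}$ into $\|\rho^{\frac{1-\delta}{2}}u_t\|_{L^2}$ through the positive lower bound of $\rho$ on $\Omega_0$.

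\emph{Elliptic step.} We view \eqref{9}$_2$ as $\mathcal{L}u=f$ with
\[
f=\rho^{1-\delta}(u_t+u\cdot\nabla u)+\nabla P\text{-term}-\delta\nabla\log\rho\cdot\mathcal{S}(u).
\]
Lemma \ref{elliptic_estimate} then gives $\|\nabla^2u\|_{L^2}$ and $\|\nabla^3u\|_{L^2}$. The weight $\nabla\log\rho=\frac{2}{\delta-1}\rho^{\frac{1-\delta}{2}}\nabla\rho^{\frac{\delta-1}{2}}$ is controlled via $\|\nabla\rho^{\frac{\delta-1}{2}}\|_{L^6}$ and $\|\rho\|_{L^\infty}$. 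The bound on $\|\rho\|_{L^\infty}$ comes from $\rho^{\gamma-\frac{1+\delta}{2}}\in W^{1,6}$.

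\emph{Density estimates.} These are transport estimates for $\rho^{\gamma-\frac{1+\delta}{2}}$ in $W^{1,6}\cap D^2$, and for $\nabla\rho^{\frac{\delta-1}{2}}$ in $L^6\cap D^1$ and $\rho^{\frac{\delta-1}{2}}$ in $L^6(\Omega_0)$. We apply $\nabla$ and $\nabla^2$ to the equations $\partial_t\phi+u\cdot\nabla\phi+c\phi\,\mathrm{div}u=0$. Every term is bounded by $\|\nabla u\|_{L^\infty}$, $\|\nabla^2u\|_{L^6}$, and the quantity $w=\rho^{\frac{\delta-1}{2}}\nabla\mathrm{div}u$, in $L^6$ and in $D^1$. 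The term $\nabla w$ is needed in $L^1_tL^2$.

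\emph{Main obstacle: bounding $\nabla w$.} We expect this step to be the hardest. We use the decomposition \eqref{260122}. The term $\rho^{\frac{\delta-1}{2}}\mathcal{L}u$ is estimated from the equation, i.e.\ $\rho^{\frac{\delta-1}{2}}f$ in $H^1$, using $\|\nabla u_t\|_{L^2}$ and $\rho^{\frac{1-\delta}{2}}$ factors. For $\rho^{\frac{\delta-1}{2}}\mathrm{curl}\,\mathrm{curl}u$ we apply Lemma \ref{l22} in its non-tangential form \eqref{uwkq1}. Its divergence involves only $\nabla\rho^{\frac{\delta-1}{2}}\cdot\mathrm{curl}\,\mathrm{curl}u$. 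Its curl equals $\mathrm{curl}(\rho^{\frac{\delta-1}{2}}\mathcal{L}u)$ plus lower-order terms. The boundary normal trace lives only on $\partial\Omega$ (on $\partial B_R$ the condition $\mathrm{curl}u\times n=0$ makes it vanish), and it is bounded in $H^{1/2}$ by $\mathcal{E}^\beta\|\nabla^2u\|_{H^1}$ using the density lower bound on $\Omega_0$.

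Combining everything yields $\frac{d}{dt}\Phi\le C\mathcal{E}^\beta(1+\|\nabla u_t\|_{L^2}^2+\|\rho^{\frac{\delta-1}{2}}\mathcal{L}u\|_{L^2}^2)$ for a suitable energy functional $\Phi$. Choosing $T_0$ small closes the bootstrap and gives \eqref{e32}.
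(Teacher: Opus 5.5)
Your proposal is correct and follows essentially the same route as the paper. The ingredients match: the energy estimate and the time-differentiated estimate for $u_t$, with the indefinite boundary terms controlled by the trace theorem and the lower bound of $\rho$ on $\Omega_0$ (via $\|\rho^{\frac{\delta-1}{2}}\|_{W^{1,6}(\Omega_0)}$); elliptic bounds from Lemma \ref{elliptic_estimate}; transport estimates for $\rho^{\gamma-\frac{1+\delta}{2}}$ and $\rho^{\frac{\delta-1}{2}}$; and the key $D^1$ bound on $w$ from the decomposition \eqref{260122} with \eqref{uwkq1} applied to $\rho^{\frac{\delta-1}{2}}\mathrm{curl}\,\mathrm{curl}\,u$, all closed by $\mathcal{E}\le C\exp\bigl(C\int_0^t\mathcal{E}^\beta ds\bigr)$.

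The only deviation is minor: you get $\|\nabla u\|_{L^2}$ by testing with $u_t$ rather than with $-2\rho^{\delta-1}\mathcal{L}u$ as in Lemma \ref{L3.2}. This works equally well, because the time-integrated bound on $\|\rho^{\frac{\delta-1}{2}}\mathcal{L}u\|_{L^2}^2$ then follows from the pointwise bound given by \eqref{zsa1}.
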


To prove Proposition \ref{p1} whose proof will be postponed to the end of this section, we will establish some necessary a priori estimates, see Lemmas \ref{L3.1}-\ref{L3.6}.  
We begin with the following standard energy estimate for $(\rho,u)$.
\begin{lemma}\label{L3.1}
Under the conditions of Proposition \ref{p1}, let $(\rho,u)$ be a smooth solution to the IBVP \eqref{9}.
Then there exist a $T_1=T_1(C_0)>0$ and a positive constant $\beta=\beta(\delta,\gamma)>1$ such that for all $t\in (0,T_1]$,
\begin{equation}\label{e34}
\sup_{0\leq s\leq t}\|\rho^\frac{1-\delta}{2}u\|_{L^2}^2+\int_0^t\|\nabla u\|_{L^2}^2ds\leq CC_0^\beta+C\int_0^t\mathcal{E}^\beta ds,
\end{equation}
where (and in what follows) $C$ denotes a generic positive constant depending only on $a$, $\delta$, $\gamma$, $\mu$, $\lambda$, $A$, $R_0$ and $C_0$.
\end{lemma}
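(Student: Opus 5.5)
Multiply the momentum equation of \eqref{9} (the form obtained by dividing \eqref{1}$_2$ by $\rho^\delta$) by $u$ and integrate over $\Omega_R$. Since the weight $\rho^{1-\delta}$ does not match the mass equation, work with the original momentum equation \eqref{1}$_2$ multiplied by $u$, i.e.\ the standard energy identity, and also with the reformulated one. Concretely, test $\rho^{1-\delta}u_t+\rho^{1-\delta}u\cdot\nabla u-\mathcal{L}u+\frac{a\gamma}{\gamma-\delta}\nabla\rho^{\gamma-\delta}=\delta\nabla\log\rho\cdot\mathcal{S}(u)$ against $u$. The time-derivative term becomes
\[
\frac12\frac{d}{dt}\int\rho^{1-\delta}|u|^2dx-\frac{1-\delta}{2}\int\rho^{-\delta}\rho_t|u|^2dx .
\]
Using $\rho_t=-\div(\rho u)$, the correction combines with the convective term to leave only terms of the form $\int\rho^{1-\delta}|u|^2\div u$ and $\int\rho^{1-\delta}u\cdot\nabla u\cdot u$. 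These are bounded by $\|\rho^{\frac{1-\delta}{2}}u\|_{L^2}\|\rho^{\frac{1-\delta}{2}}\|_{L^\infty}\|u\|_{L^6}\|\nabla u\|_{L^3}$. In turn this is $\le C\mathcal{E}^\beta$, because $\|u\|_{L^6}\le C(\|\nabla u\|_{L^2}+\|u\|_{L^2(\Omega_0)})$ and $\|\rho^{\frac{1-\delta}{2}}\|_{L^\infty}$ is controlled by a power of $\|\rho^{\gamma-\frac{1+\delta}{2}}\|_{W^{1,6}}$ (note $\gamma-\frac{1+\delta}{2}>0$ has the same sign as $\frac{1-\delta}2$... in fact $\rho$ is bounded above, since $\rho^{\gamma-\frac{1+\delta}2}\in W^{1,6}\subset L^\infty$). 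The $L^2(\Omega_0)$ part of $u$ is bounded by $\|\rho^{\frac{\delta-1}{2}}\|_{L^6(\Omega_0)}\|\rho^{\frac{1-\delta}{2}}u\|_{L^2}$ after Hölder, up to an $L^3$ interpolation. Here we use the lower bound of $\rho$ near $\partial\Omega$ encoded in $\mathcal{E}$.

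For the viscous term, integrate $-\int\mathcal{L}u\cdot u$ by parts using $\Delta u=\nabla\div u-\curl\curl u$ and the boundary conditions $u\cdot n=0$, $\curl u\times n=-Au$ on $\partial\Omega$, with zero on $\partial B_R$. This gives
\[
(2\mu+\lambda)\|\div u\|_{L^2}^2+\mu\|\curl u\|_{L^2}^2+\mu\int_{\partial\Omega}Au\cdot u\,dS .
\]
The boundary integral has no sign. Bound it by the trace theorem on $\Omega_0$:
\[
\Big|\int_{\partial\Omega}Au\cdot u\Big|\le C\|u\|_{L^2(\Omega_0)}\|u\|_{H^1(\Omega_0)}\le \varepsilon\|\nabla u\|_{L^2}^2+C\|u\|_{L^2(\Omega_0)}^2 .
\]
Then Lemma~\ref{l22} (\eqref{uwkq} with $p=2$) converts $\|\div u\|^2+\|\curl u\|^2$ into $c\|\nabla u\|_{L^2}^2-C\|u\|_{L^2(\Omega_0)}^2$. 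The term $\|u\|_{L^2(\Omega_0)}^2\le\|\rho^{\frac{\delta-1}{2}}\|_{L^6(\Omega_0)}^2\|\rho^{\frac{1-\delta}{2}}u\|_{L^2}\|u\|_{L^6}$-type bounds (Hölder plus interpolation) are absorbed, up to $\varepsilon\|\nabla u\|^2$, into $C\mathcal{E}^\beta$.

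The right-hand side terms remain. Write the pressure term as $C\int\rho^{\gamma-\delta}\div u$, noting the boundary term vanishes since $u\cdot n=0$. Bound it by $\varepsilon\|\nabla u\|_{L^2}^2+C\|\rho^{\gamma-\delta}\|_{L^2}^2$, the latter handled via the $L^2$/$L^6$ control of powers of $\rho$ in $\mathcal{E}$. The degenerate term $\delta\int\nabla\log\rho\cdot\mathcal{S}(u)\cdot u$ is the main obstacle. Write $\nabla\log\rho=\frac{2}{\delta-1}\rho^{\frac{1-\delta}{2}}\nabla\rho^{\frac{\delta-1}{2}}$. Then it is bounded by
\[
C\|\nabla\rho^{\frac{\delta-1}{2}}\|_{L^6}\|\rho^{\frac{1-\delta}{2}}u\|_{L^2}\|\nabla u\|_{L^3}\le C\mathcal{E}^\beta ,
\]
using $\|\nabla u\|_{L^3}\le C\|\nabla u\|_{L^2}^{1/2}\|\nabla u\|_{H^1}^{1/2}$. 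Here $\|\nabla^2u\|_{L^2}$ is controlled through Lemma~\ref{elliptic_estimate} by $\|\mathcal{L}u\|_{L^2}+\|\nabla u\|_{L^2}$. One checks $\mathcal{L}u$ from the equation, with all factors among the quantities in $\mathcal{E}$ (if needed one simply bounds $\|\nabla u\|_{L^3}$ crudely by powers of $\mathcal{E}$ and $\|\nabla^2 u\|$, the latter itself a power of $\mathcal{E}$ via the equation). Collecting everything, choosing $\varepsilon$ small, and integrating in time gives \eqref{e34}. The initial term $\|\rho_0^{\frac{1-\delta}{2}}u_0\|_{L^2}^2\le C_0^2$, and $\beta$ is taken as the maximal power appearing.
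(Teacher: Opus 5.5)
Your skeleton matches the paper's. You test the reformulated momentum equation with $u$ and use \eqref{33} to absorb the time derivative of the weight $\rho^{1-\delta}$. You control the boundary integral by the trace theorem and write $\nabla\log\rho=\frac{2}{\delta-1}\rho^{\frac{1-\delta}{2}}\nabla\rho^{\frac{\delta-1}{2}}$ for the degenerate term. You get $\|\nabla^2u\|_{L^2}\le C\mathcal{E}^\beta$ from Lemma \ref{elliptic_estimate} as in \eqref{e39}.

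\textbf{The gap is the pressure term.} After integrating by parts you need $\|\rho^{\gamma-\delta}\|_{L^2}$ uniformly in $R$, and $\mathcal{E}$ does not provide it. The only integrability of $\rho$ itself in $\mathcal{E}$ is $\varphi=\rho^{\gamma-\frac{1+\delta}{2}}\in L^6\cap L^\infty$. That gives $\rho^q\in L^1$ only for $q\ge 6\gamma-3-3\delta$, while $2(\gamma-\delta)<6\gamma-3-3\delta$ for every $\gamma>1$, $\delta\in(0,1)$. On $\Omega_R$ you would only get a bound like $|\Omega_R|^{1/3}\|\rho^{\gamma-\delta}\|_{L^6}$. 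This grows with $R$ and defeats the purpose of the lemma.

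This is not a technicality. Take the model data $\rho_0=(1+|x|^{2\alpha})^{-1}$ from the paper's remark with $1<\gamma\le\frac{3-\delta}{2}$. Then $\rho_0^{\gamma-\delta}\notin L^2(\Omega)$ for every admissible $\alpha$. This is also why Section 5 needs the extra hypothesis $\rho_0^{\gamma-\frac{1+\delta}{2}}\in L^{6\alpha}$: there the pressure gradient is moved onto $\rho^{\frac{1-\delta}{2}}u$ by integration by parts.

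\textbf{The fix is not to integrate by parts.} Write $\frac{a\gamma}{\gamma-\delta}\nabla\rho^{\gamma-\delta}=\frac{2a\gamma}{2\gamma-1-\delta}\rho^{\frac{1-\delta}{2}}\nabla\rho^{\gamma-\frac{1+\delta}{2}}$. Then the pressure contribution is at most $C\|\nabla\rho^{\gamma-\frac{1+\delta}{2}}\|_{L^2}\|\rho^{\frac{1-\delta}{2}}u\|_{L^2}\le C\mathcal{E}^2$, using the $D^1$ component in \eqref{qaqa}. This is how \eqref{3.4} is written, and with this change your argument goes through.

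Your extraction of the dissipation $\|\nabla u\|_{L^2}^2$ through Lemma \ref{l22} and the lower bound of $\rho$ on $\Omega_0$ is correct but more than needed. Since $\|\nabla u\|_{L^2}\le\mathcal{E}$, the paper bounds every right-hand term, including the boundary integral, by $C\mathcal{E}^\beta$ pointwise in time. Then $\int_0^t\|\nabla u\|_{L^2}^2ds\le\int_0^t\mathcal{E}^2ds$ holds trivially.
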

\begin{proof}
First, from \eqref{qaqa} it follows that
\begin{equation}\label{rho_sup}
    \|\rho\|_{L^\infty}=\|\rho^{\gamma-\frac{1+\delta}{2}}\|_{L^\infty}^\frac{2}{2\gamma-1-\delta}\leq C\|\rho^{\gamma-\frac{1+\delta}{2}}\|_{W^{1,6}}^\frac{2}{2\gamma-1-\delta}\leq C\mathcal{E}^\beta,
\end{equation}
in which $\beta=\beta(\delta, \gamma)>1$ is allowed to change from line to line.

Next, it follows from \eqref{9}$_2$ that   $u$ satisfies 
\begin{equation}\label{e38}
\mathcal{L}u=\rho^{1-\delta}u_t+\rho^{1-\delta}u\cdot\nabla u+\frac{a\gamma}{\gamma-\delta}\nabla\rho^{\gamma-\delta}-\delta\nabla\log\rho\cdot S(u).
\end{equation}
One thus obtains after using Lemma \ref{elliptic_estimate}, Gagliardo-Nirenberg inequality, \eqref{qaqa},  \eqref{e38}, \eqref{9}$_3$, \eqref{9}$_4$, and \eqref{rho_sup} that
\begin{align}
\|\nabla^2u\|_{L^2}\leq
&C\bigl(\|\rho^{1-\delta}u_t\|_{L^2}+\|\rho^{1-\delta}u\cdot\nabla u\|_{L^2}+\|\nabla\rho^{\gamma-\delta}\|_{L^2}+\|\nabla\log\rho\cdot S( u)\|_{L^2}+\|\nabla u\|_{L^2}\bigr)\nonumber\\
\leq&C\bigl(\|\rho\|_{L^\infty}^{(1-\delta)/2}\|\rho^\frac{1-\delta}{2}u_t\|_{L^2}
+\|\rho\|_{L^\infty}^{1-\delta}\|\nabla u\|_{L^2}^{3/2}\|\nabla u\|_{H^1}^{1/2}
+\|\rho\|_{L^\infty}^{(1-\delta)/2}\|\nabla\rho^{\gamma-\frac{1+\delta}{2}}\|_{L^2}\nonumber\\
&+\|\rho\|_{L^\infty}^{(1-\delta)/2}\|\nabla\rho^{\frac{\delta-1}{2}}\|_{L^{6}}\|\nabla u\|_{L^2}^{1/2}\|\nabla u\|_{H^1}^{1/2}+\|\nabla u\|_{L^2}\bigr)\nonumber\\
\leq&C\mathcal{E}^{\beta}+\frac{1}{2}\|\nabla^2u\|_{L^2}\nonumber,
%\leq &C\mathcal{E}^{\beta},\nonumber
\end{align}
which directly yields that
\begin{align}\label{e39}
\|\nabla^2u\|_{L^2}\leq C\mathcal{E}^{\beta}.
\end{align}

%Additionally, \eqref{e39} together with Lemma \ref{G-N} yields
%\begin{align}\label{u_sup}
%\|u\|_{L^\infty}\leq C\|\nabla u\|_{H^1}\leq C\mathcal{E}^\beta.
%\end{align}

Then, multiplying the mass equation $\eqref{9}_1$ by $(1-\delta)\rho^{-\delta}$ show that
\begin{equation}\label{33}
(\rho^{1-\delta})_t+u\cdot\nabla\rho^{1-\delta}+(1-\delta)\rho^{1-\delta}\mathrm{div}u=0.
\end{equation}
Adding $\eqref{9}_2$ multiplied by  $u$ and  \eqref{33} multiplied by  $\frac{|u|^2}{2}$ together, we obtain after integrating the resulting equality on $\Omega_R$ by parts that 
\begin{equation}\label{3.4}
\begin{split}
&\frac{1}{2}\frac{d}{dt}\|\rho^\frac{1-\delta}{2}u\|_{L^2}^2+\mu\|\mathrm{curl} u\|_{L^2}^2+
(2\mu+\lambda)\|\mathrm{div}u\|_{L^2}^2+\mu\int_{\partial\Omega}u\cdot A\cdot udS \\
&\leq C\int\bigl(\rho^{1-\delta}|\nabla u||u|^2
+|\nabla\rho^{\gamma-\frac{1+\delta}{2}}||\rho^\frac{1-\delta}{2}u|+|\nabla\rho^\frac{\delta-1}{2}||\rho^\frac{1-\delta}{2}u||\nabla u|\bigr)dx\\
&\leq C\Big(\|\rho\|_{L^\infty}^\frac{1-\delta}{2}\|\nabla u\|_{L^2}\|u\|_{L^\infty}\|\rho^\frac{1-\delta}{2} u\|_{L^2}
+\|\nabla\rho^{\gamma-\frac{1+\delta}{2}}\|_{L^2}\|\rho^\frac{1-\delta}{2} u\|_{L^2}\\
&\quad\quad\quad+\|\nabla\rho^\frac{\delta-1}{2}\|_{L^{6}}\|\rho^\frac{1-\delta}{2} u\|_{L^2}\|\nabla u\|_{L^{3}}\Big)\\
&\leq C\mathcal{E}^\beta,
\end{split}
\end{equation}
where one has used Lemma \ref{G-N}, \eqref{qaqa}, \eqref{rho_sup}, and \eqref{e39}.
Using the trace theorem, the boundary term in \eqref{3.4} can be governed as
\begin{align}\label{trace1}
\int_{\partial\Omega}u\cdot A\cdot udS\leq C\|u\|^2_{H^1(\Omega_0)}  \leq C(\Omega_0)\|\nabla u\|_{L^2}^2\leq C\mathcal{E}^\beta.
\end{align}

%Combining with the fact
%\begin{equation}\label{3.4aqa}
%\begin{split}
%\int_0^t\|\nabla u\|_{L^2}^2ds\leq \int_0^t\mathcal{E}^2ds,
%\end{split}
%\end{equation}
Thus, one derives \eqref{e34} after  integrating the resulting \eqref{3.4} over $(0,t)$ and using \eqref{trace1}. The proof of Lemma \ref{L3.1} is completed.
\end{proof}

The following lemma shows the weighted estimates for $u_t$ with positive power    of $\rho$ and  the  $L^2$-bounds for  $\nabla u_t$.

\begin{lemma}\label{L3.3}
	Let $(\rho,u)$ and $T_1$ be as in Lemma \ref{L3.1}. Then for all $t\in (0,T_1]$,
	\begin{equation}\label{e313}
		\sup_{0\leq s\leq t}\|\rho^\frac{1-\delta}{2}u_t\|_{L^2}^2+\int_0^t\|\nabla u_t\|_{L^2}^2ds
		\leq CC_0^\beta+C\int_0^t\mathcal{E}^\beta ds.
	\end{equation}
\end{lemma}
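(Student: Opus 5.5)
We differentiate the momentum equation \eqref{9}$_2$ in time and test it against $u_t$. Writing \eqref{9}$_2$ as $\rho^{1-\delta}u_t+\rho^{1-\delta}u\cdot\nabla u-\mathcal{L}u+\frac{a\gamma}{\gamma-\delta}\nabla\rho^{\gamma-\delta}=\delta\nabla\log\rho\cdot\mathcal{S}(u)$, we apply $\partial_t$. We multiply by $u_t$ and integrate over $\Omega_R$, using the boundary conditions \eqref{9}$_3$--\eqref{9}$_4$, which are also satisfied by $u_t$. The identity $-\mathcal{L}u=\mu\,\curl\curl u-(2\mu+\lambda)\nabla\div u$ then gives
\begin{equation*}
\frac12\frac{d}{dt}\|\rho^\frac{1-\delta}{2}u_t\|_{L^2}^2+\mu\|\curl u_t\|_{L^2}^2+(2\mu+\lambda)\|\div u_t\|_{L^2}^2+\mu\int_{\partial\Omega}u_t\cdot A\cdot u_t\,dS=\sum_i I_i .
\end{equation*}
The right-hand side collects the following terms:
\begin{itemize}
\item $-\frac12\int(\rho^{1-\delta})_t|u_t|^2$, from commuting $\partial_t$ with the weight;
\item $-\int(\rho^{1-\delta}u\cdot\nabla u)_t\cdot u_t$;
\item the pressure term $\frac{a\gamma}{\gamma-\delta}\int\rho^{\gamma-\delta}_t\div u_t$, after integrating by parts using $u_t\cdot n=0$;
\item the degenerate term $\delta\int(\nabla\log\rho\cdot\mathcal{S}(u))_t\cdot u_t$.
\end{itemize}
By \eqref{uwkq} we have $\|\nabla u_t\|_{L^2}\le C(\|\curl u_t\|_{L^2}+\|\div u_t\|_{L^2}+\|u_t\|_{L^2(\Omega_0)})$. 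So the left side controls $\|\nabla u_t\|_{L^2}^2$ up to a low-order term on $\Omega_0$.

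To handle the terms localized near the boundary, we use Lemma \ref{087} together with $\|\rho^\frac{\delta-1}{2}\|_{L^6(\Omega_0)}+\|\nabla\rho^\frac{\delta-1}{2}\|_{L^6}\le\mathcal{E}$. This gives $\|\rho^{-1}\|_{L^\infty(\Omega_0)}\le C\mathcal{E}^\beta$, which we obtain through Sobolev embedding of $\rho^\frac{\delta-1}{2}\in W^{1,6}(\Omega_0)$. Hence
\begin{equation*}
\|u_t\|_{L^2(\Omega_0)}\le C\mathcal{E}^\beta\|\rho^\frac{1-\delta}{2}u_t\|_{L^2}\le C\mathcal{E}^\beta .
\end{equation*}
The boundary integral, whose sign is unknown since $A$ is not assumed semi-definite, is treated with the trace theorem and interpolation:
\begin{equation*}
\Bigl|\int_{\partial\Omega}u_t\cdot A\cdot u_t\Bigr|\le C\|u_t\|_{L^2(\Omega_0)}\|u_t\|_{H^1(\Omega_0)}\le \varepsilon\|\nabla u_t\|_{L^2}^2+C\mathcal{E}^\beta .
\end{equation*}

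For the interior terms, we use the mass equation to write $\rho^{1-\delta}_t$, $\rho^{\gamma-\delta}_t$ and $(\nabla\log\rho)_t$ in terms of $u$, $\nabla u$, $\nabla\rho^{\frac{\delta-1}2}$ and $\nabla\rho^{\gamma-\frac{1+\delta}{2}}$. We expand $\nabla\log\rho=\frac{2}{\delta-1}\rho^\frac{1-\delta}{2}\nabla\rho^\frac{\delta-1}{2}$ and estimate each product by H\"older's inequality. In these estimates we use \eqref{rho_sup}, \eqref{e39}, $\|u\|_{L^\infty}\le C\|\nabla u\|_{H^1}$, and the Gagliardo--Nirenberg inequality. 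For example, $\int\rho^{1-\delta}|u||\nabla u_t||u_t|$ is bounded by $\|\rho\|_{L^\infty}^{\frac{1-\delta}2}\|u\|_{L^\infty}\|\rho^\frac{1-\delta}{2}u_t\|_{L^2}\|\nabla u_t\|_{L^2}$. The pressure term is bounded by $C\mathcal{E}^\beta\|\nabla u_t\|_{L^2}$. Each $I_i$ is thus controlled by $\varepsilon\|\nabla u_t\|_{L^2}^2+C\mathcal{E}^\beta$.

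The main obstacle is the term $\delta\int(\nabla\log\rho)_t\cdot\mathcal{S}(u)\cdot u_t$. Here $(\nabla\log\rho)_t=-\nabla(u\cdot\nabla\log\rho+\div u)$ involves $\nabla^2\rho$ and $\nabla^2 u$. We plan to integrate by parts to move the outer gradient onto $\mathcal{S}(u)\cdot u_t$. The boundary contribution vanishes on $\partial B_R$ because $u\cdot n=0$ there. On $\partial\Omega$ it is again handled through the trace theorem and the lower bound of $\rho$ on $\Omega_0$. The interior terms contain products such as $|u||\nabla\log\rho||\nabla^2u||u_t|$ and $\nabla\log\rho\,\mathcal{S}(u)u_t$, which we bound by $\|\nabla\rho^\frac{\delta-1}{2}\|_{L^6}\|\nabla^2u\|_{L^2}\|u_t\|_{L^6}$, with $\|u_t\|_{L^6}\le C\|\nabla u_t\|_{L^2}+C\mathcal{E}^\beta$.

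Summing all the estimates and choosing $\varepsilon$ small gives
\begin{equation*}
\frac{d}{dt}\|\rho^\frac{1-\delta}{2}u_t\|_{L^2}^2+\|\nabla u_t\|_{L^2}^2\le C\mathcal{E}^\beta .
\end{equation*}
Integrating over $(0,t)$ requires the initial value $\|\rho^\frac{1-\delta}{2}u_t(0)\|_{L^2}$. We read it off from the equation at $t=0$, which gives $\rho_0^\frac{1-\delta}{2}u_t(0)=\rho_0^{\frac{\delta-1}{2}}\bigl(\mathcal{L}u_0-\frac{a\gamma}{\gamma-\delta}\nabla\rho_0^{\gamma-\delta}+\delta\nabla\log\rho_0\cdot\mathcal{S}(u_0)\bigr)-\rho_0^\frac{1-\delta}{2}u_0\cdot\nabla u_0$. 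This quantity is bounded by $CC_0^\beta$ via the compatibility term $\|\rho_0^\frac{\delta-1}{2}\mathcal{L}u_0\|_{L^2}$ in $C_0$. This yields \eqref{e313}.
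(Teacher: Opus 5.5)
Your overall scheme is the same as the paper's. You differentiate \eqref{9}$_2$ in time and test with $u_t$. You recover $\|\nabla u_t\|_{L^2}$ from \eqref{uwkq} together with $\|u_t\|_{L^2(\Omega_0)}\le\|\rho^{\frac{\delta-1}{2}}\|_{L^\infty(\Omega_0)}\|\rho^{\frac{1-\delta}{2}}u_t\|_{L^2}$ and \eqref{rho_inf}. You control the $\partial\Omega$ integral by the trace theorem and bound the initial value through the compatibility term in $C_0$. The one place you deviate is your ``main obstacle'', and there the argument fails as written. After integrating $\delta\int\nabla f\cdot\mathcal{S}(u)\cdot u_t\,dx$ by parts, with $f=-(u\cdot\nabla\log\rho+\div u)$, the boundary integrand is $f\,(\mathcal{S}(u)n)\cdot u_t$. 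The condition $u_t\cdot n=0$ removes only the $\lambda\div u\,(n\cdot u_t)$ part. The remaining part $2\mu(\mathcal{D}(u)n)\cdot u_t$ is the tangential component of $\mathcal{D}(u)n$, which $u\cdot n=0$ does not kill. On the sphere, using $u\cdot n=0$ and $\curl u\times n=0$, one computes $2(\mathcal{D}(u)n)\cdot u_t=2\bigl((u_t\cdot\nabla)u\bigr)\cdot n=-\frac{2}{R}u\cdot u_t$ on $\partial B_R$. So the curvature term $-\frac{2\mu\delta}{R}\int_{\partial B_R}f\,u\cdot u_t\,dS$ survives. It could be estimated with a trace inequality on an annulus near $\partial B_R$ whose constant is uniform in $R$, but your claim that it vanishes is false.

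More to the point, the detour is unnecessary, because this term is not an obstacle. Writing $\nabla\log\rho=\frac{2}{\delta-1}\rho^{\frac{1-\delta}{2}}\nabla\rho^{\frac{\delta-1}{2}}$ gives
$|\nabla^2\log\rho|\le C\bigl(\rho^{\frac{1-\delta}{2}}|\nabla^2\rho^{\frac{\delta-1}{2}}|+\rho^{1-\delta}|\nabla\rho^{\frac{\delta-1}{2}}|^2\bigr)$.
Here $\|\nabla^2\rho^{\frac{\delta-1}{2}}\|_{L^2}$ is already part of $\mathcal{E}$ (the $D^1$-norm of $\nabla\rho^{\frac{\delta-1}{2}}$), and $\|\nabla\div u\|_{L^2}\le C\mathcal{E}^\beta$ by \eqref{e39}. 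Hence $\|(\nabla\log\rho)_t\|_{L^2}\le C\mathcal{E}^\beta$ directly; this is the paper's \eqref{qaz4}. The term is then bounded by $C\|(\nabla\log\rho)_t\|_{L^2}\|\nabla u\|_{L^3}\|u_t\|_{L^6}\le\varepsilon\|\nabla u_t\|_{L^2}^2+C\mathcal{E}^\beta$, with no integration by parts and no new boundary integrals. Your integration by parts would not have avoided second derivatives of $\rho$ anyway. Estimating the $\partial\Omega$ piece $\int_{\partial\Omega}f\,(\mathcal{S}(u)n)\cdot u_t\,dS$ through the $W^{1,1}(\Omega_0)$ trace requires $\nabla f$, which contains $u\cdot\nabla^2\log\rho$.
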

\begin{proof}
First, it follows from $\eqref{9}_1$ that $\rho^{\gamma-\delta}$ and $\nabla\log\rho$ satisfy
\begin{equation}
\label{qa98}
(\rho^{\gamma-\delta})_t=-u\cdot\nabla\rho^{\gamma-\delta}-(\gamma-\delta)\rho^{\gamma-\delta}\mathrm{div}u, 
\end{equation}
and
\begin{equation}
\label{qa981} 
(\nabla\log\rho)_t=-\nabla u\cdot \nabla\log\rho-u\cdot \nabla^2\log\rho-\nabla\mathrm{div}u,
\end{equation}
which combining with Lemma \ref{G-N}, \eqref{qaqa}, \eqref{rho_sup}, and \eqref{e39} that
\begin{gather}
%\|(\rho^\frac{1-\delta}{2})_t\|_{L^3}\leq\|\rho\|_{L^\infty}^{1-\delta}\|u\|_{L^6}\|\nabla\rho^{\frac{\delta-1}{2}}\|_{L^6}+\|\rho\|_{L^\infty}^{(1-\delta)/2}\|\nabla u\|_{L^3}\leq C\mathcal{E}^\beta,\label{qaz1}\\
\|(\rho^{\gamma-\delta})_t\|_{L^2}\leq\|\rho\|_{L^\infty}^{(1-\delta)/2}\|u\|_{L^\infty}\|\nabla\rho^{\gamma-\frac{1+\delta}{2}}\|_{L^2}+\|\rho\|_{L^\infty}^{\gamma-\delta}\|\nabla u\|_{L^2}\leq C\mathcal{E}^\beta,\label{qaz2}
%\|(\nabla\log\rho)_t\|_{L^2}\leq C\|\rho\|_{L^\infty}^{(1-\delta)/2}\|u\|_{L^6}\|\nabla\rho^\frac{\delta-1}{2}\|_{L^{6}}+\|\nabla u\|_{L^3}\leq C\mathcal{E}^\beta,\label{qaz3}
\end{gather}
%\begin{equation}\label{3.38}
%\begin{split}
%&\|\rho^\frac{\delta-1}{2}\nabla(\rho^{\gamma-\delta})_t\|_{L^2}\\
%&\leq  C\bigl(\|\rho\|_{L^\infty}^{(2\gamma-1-\delta)/2}\|\nabla^2u\|_{L^2}
%+\|\nabla\rho^{\gamma-\frac{\delta+1}{2}}\|_{L^2}\|\nabla u\|_{L^2}
%+\|\nabla^2\rho^{\gamma-\frac{\delta+1}{2}}\|_{L^2}\|u\|_{L^\infty}\\ &\quad+\|\nabla\rho^\frac{\delta-1}{2}\|_{L^{6}}\|\nabla\rho^{\gamma-\frac{\delta+1}{2}}\|_{L^6}\|\rho^\frac{1-\delta}{2}u\|_{L^{6}}\bigr)\\
%&\leq C\mathcal{E}^\beta,
%\end{split}
%\end{equation}
\begin{equation}
\begin{split}\label{qaz4}
\|\nabla(\log\rho)_t\|_{L^2}
&\leq C\bigl(\|\rho\|_{L^\infty}^{(1-\delta)/2}\|\nabla u\|_{L^{3}}\|\nabla\rho^\frac{\delta-1}{2}\|_{L^{6}}+\|\rho\|_{L^\infty}^{1-\delta}\|u\|_{L^{6}}\|\nabla\rho^\frac{\delta-1}{2}\|_{L^{6}}^2\\
&\quad+\|\rho\|_{L^\infty}^{(1-\delta)/2}\|u\|_{L^\infty}\|\nabla^2\rho^\frac{\delta-1}{2}\|_{L^{2}}
+\|\nabla\mathrm{div}u\|_{L^2}\bigr)\\
&\leq C\mathcal{E}^\beta.
\end{split}
\end{equation}

Next, operating $\partial_t$ to $\eqref{9}_2$ yields that
\begin{equation}\label{32}
\begin{split}
(\rho^{1-\delta})_tu_t+\rho^{1-\delta}u_{tt}
+(\rho^{1-\delta})_tu\cdot\nabla u
+\rho^{1-\delta}u_t\cdot\nabla u
+\rho^{1-\delta}u\cdot\nabla u_t\\
-\mathcal{L}u_t+\frac{a\gamma}{\gamma-\delta}\nabla(\rho^{\gamma-\delta})_t
=\delta(\nabla\log\rho)_t\cdot S(u)+\delta\nabla\log\rho\cdot S(u)_t.
\end{split}
\end{equation}
%We multiply the mass equation $\eqref{9}_1$ by $\rho^{-1}$ to get
%\begin{equation}\label{3.13}
%	(\log\rho)_t=-\nabla\log\rho\cdot u-\mathrm{div}u.
%\end{equation}
Multiplying the above equality by $u_t$, we obtain after using integration by parts and $\eqref{33}$ that
\begin{equation}\label{34}
\begin{split}
&\frac{1}{2}\frac{d}{dt}\|\rho^\frac{1-\delta}{2}u_t\|_{L^2}^2+\mu\|\mathrm{curl} u_t\|_{L^2}^2
+(2\mu+\lambda)\|\mathrm{div}u_t\|_{L^2}^2+\mu\int_{\partial\Omega}u_t\cdot A\cdot u_tdS \\
&\leq C\int\Big(\rho^{1-\delta}|\nabla u||u_t|^2+\rho^{1-\delta}|u||\nabla u_t||u_t|+|u|^2|\nabla\rho^{1-\delta}||\nabla u||u_t|+\rho^{1-\delta}|u||\nabla u|^2|u_t|
\\
&\quad+|(\rho^{\gamma-\delta})_t||\div u_t|
+|(\nabla\log\rho)_t||\nabla u||u_t|+|\nabla\log\rho||\nabla u_t||u_t|\Big)dx\\
&\triangleq\sum_{i=1}^7 I_i.
\end{split}
\end{equation}
Thanks to Lemma \ref{G-N}, \eqref{qaqa}, \eqref{rho_sup}, \eqref{e39}, and \eqref{qaz2}-\eqref{qaz4}, we can estimate each $I_i$ as follows:
$$
\begin{aligned}
&I_1\leq C\|\rho\|_{L^\infty}^{(1-\delta)/2}\|\nabla u\|_{L^3}\|\rho^\frac{1-\delta}{2}u_t\|_{L^2}\|u_t\|_{L^6}
\leq C\mathcal{E}^\beta+\epsilon\|\nabla u_t\|_{L^2}^2,\\
&I_2\leq C\|\rho\|_{L^\infty}^{(1-\delta)/2}\|u\|_{L^\infty}\|\nabla u_t\|_{L^2}\|\rho^\frac{1-\delta}{2}u_t\|_{L^2}
\leq C\mathcal{E}^\beta+\epsilon\|\nabla u_t\|_{L^2}^2,\\
&I_3\leq C\|\rho\|_{L^\infty}^{3(1-\delta)/2}\|u\|_{L^6}\|u\|_{L^\infty}\|\nabla\rho^\frac{\delta-1}{2}\|_{L^{6}}\|\nabla u\|_{L^2}\|u_t\|_{L^6}
\leq C\mathcal{E}^\beta+\epsilon\|\nabla u_t\|_{L^2}^2,\\
&I_4\leq C\|\rho\|_{L^\infty}^{1-\delta}\|u\|_{L^6}\|\nabla u\|_{L^2}\|\nabla u\|_{L^6}\|u_t\|_{L^6}
\leq C\mathcal{E}^\beta+\epsilon\|\nabla u_t\|_{L^2}^2,\\
&I_5\leq C\|(\rho^{\gamma-\delta})_t\|_{L^2}\|\nabla u_t\|_{L^2}
\leq C\mathcal{E}^\beta+\epsilon\|\nabla u_t\|_{L^2}^2,\\
&I_6\leq C\|(\nabla\log\rho)_t\|_{L^2}\|\nabla u\|_{L^3}\|u_t\|_{L^6}
\leq C\mathcal{E}^\beta+\epsilon\|\nabla u_t\|_{L^2}^2,\\
&I_7 \leq C\|\rho\|_{L^\infty}^{(1-\delta)/4}\|\nabla\rho^\frac{\delta-1}{2}\|_{L^{6}}\|\nabla u_t\|_{L^2}\| u_t\|_{L^6}^{1/2}\|\rho^\frac{1-\delta}{2}u_t\|_{L^2}^{1/2}\leq C\mathcal{E}^\beta+\epsilon\|\nabla u_t\|_{L^2}^2.
\end{aligned}
$$
%where we use the continuity equation and \eqref{3.13} to estimate
%\begin{gather}
%    \|(\rho^{\gamma-\delta})_t\|_{L^2}
%    \leq\|\rho\|_{L^\infty}^{\gamma-\delta}\|\nabla u\|_{L^2}
%    +\|\rho\|_{L^\infty}^{(1-\delta)/2}\|u\|_{L^\infty}\|\nabla\rho^{\gamma-\frac{1+\delta}{2}}\|_{L^2}
%    \leq C\mathcal{E}^{\beta},\\
%    \|(\log\rho)_t\|_{L^3}\leq C\|\rho\|_{L^\infty}^\frac{(1-\delta)}{2}\|\nabla\rho^\frac{\delta-1}{2}\|_{L^{6}}\|\nabla u\|_{H^1}+\|\nabla u\|_{H^1}
%    \leq C\mathcal{E}^{\beta}.
%\end{gather}

Using the trace theorem, the boundary term in \eqref{34} can be governed as
\begin{equation}\label{trace3}
\begin{split}
\int_{\partial\Omega}u_t\cdot A\cdot u_tdS
&\leq C\left\||u_t|^2\right\|_{W^{1,1}(\Omega_0)}  \\
&\leq C\|\rho^{\delta-1}\|_{L^\infty(\Omega_0)}\|\rho^\frac{1-\delta}2u_t\|_{L^2(\Omega_0)}^2+C\|\rho^\frac{\delta-1}2\|_{L^\infty(\Omega_0)}\|\rho^\frac{1-\delta}2u_t\|_{L^2}\|\nabla u_t\|_{L^2}\\
&\leq C\mathcal{E}^\beta\|\rho^\frac{1-\delta}{2}u_t\|_{L^2}^2+\epsilon\|\nabla u_t\|_{L^2}^2,
\end{split}
\end{equation}
where we have used
\begin{equation}\label{rho_inf}
    \|\rho^\frac{\delta-1}{2}\|_{L^\infty(\Omega_0)}\leq C\|\rho^\frac{\delta-1}{2}\|_{W^{1,6}(\Omega_0)}\leq C\mathcal{E}^\beta.
\end{equation}
And, it follows from  Lemma \ref{l22}, \eqref{qaqa} and \eqref{rho_inf} that
\begin{equation}\label{3.4aqa0}
\begin{split}
\int_\tau^t\|\nabla u_t\|_{L^2}^2ds
&\leq C\int_\tau^t\big(\|\curl u_t\|_{L^2}^2+\|\div u_t\|_{L^2}^2+\|u_t\|_{L^2(\Omega_0)}^2\big)ds\\
&\leq C\int_\tau^t\big(\|\curl u_t\|_{L^2}^2+\|\div u_t\|_{L^2}^2+\|\rho^{\delta-1}\|_{L^\infty(\Omega_0)}\|\rho^\frac{1-\delta}2u_t\|_{L^2(\Omega_0)}^2\big)ds\\
&\leq C\int_\tau^t\big(\|\curl u_t\|_{L^2}^2+\|\div u_t\|_{L^2}^2+\mathcal{E}^\beta(s)\big)ds.
\end{split}
\end{equation}

Substituting $I_1$--$I_7$ into \eqref{34} and integrating the resulting inequality over $(\tau,t)$ with $\tau\in(0,t)$, by virtue of \eqref{trace3} and \eqref{3.4aqa0},  we obtain after choosing $\epsilon$  small enough  that
\begin{equation}
\sup_{\tau\leq s\leq t}\|\rho^\frac{1-\delta}{2}u_t(t)\|_{L^2}^2+\int_\tau^t\|\nabla u_t(s)\|_{L^2}^2ds\leq\|\rho^\frac{1-\delta}{2}u_t(\tau)\|_{L^2}^2+C\int_\tau^t\mathcal{E}^\beta(s)ds. \label{35}
\end{equation}

Then, multiplying the reformulated momentum equation $\eqref{9}_2$ by $u_t$, we can easily deduce that
\begin{equation}
\begin{split}\nonumber
\|\rho^\frac{1-\delta}{2}u_t\|_{L^2}
\leq& C\Big(\|\rho
\|_{L^\infty}^{(1-\delta)/2}\|u\|_{L^\infty}\|\nabla u\|_{L^2}+\|\rho^\frac{\delta-1}{2}\mathcal{L}u\|_{L^2}+\|\nabla\rho^{\gamma-\frac{1+\delta}{2}}\|_{L^2}+\|\nabla\rho^\frac{\delta-1}{2}\|_{L^{6}}\|\nabla u\|_{L^{3}}\Big),
\end{split}
\end{equation}
which along with the compatibility condition \eqref{118} implies
\begin{equation}\label{317}
\begin{split}
\limsup\limits_{\tau \to 0}\|\rho^\frac{1-\delta}{2}u_t(\tau)\|_{L^2}
\leq& C\bigl(\|\rho_0\|_{L^\infty}^{(1-\delta)/2}\|u_0\|_{L^\infty}\|\nabla u_0\|_{L^2}
+\|\rho_0^\frac{\delta-1}{2}\mathcal{L}u_0\|_{L^2}\\
&+\|\nabla\rho_0^{\gamma-\frac{1+\delta}{2}}\|_{L^2}
+\|\nabla\rho_0^\frac{\delta-1}{2}\|_{L^{6}}\|\nabla u_0\|_{L^3}\bigr)\\
\leq& C\bigl(\|\rho_0^{\gamma-\frac{1+\delta}{2}}\|_{W^{1,6}}^\frac{1-\delta}{2\gamma-1-\delta}\|\nabla u_0\|_{H^1}\|\nabla u_0\|_{L^2}
+\|\rho_0^\frac{\delta-1}{2}\mathcal{L}u_0\|_{L^2}\\
&+\|\nabla\rho_0^{\gamma-\frac{1+\delta}{2}}\|_{L^2}
+\|\nabla\rho_0^\frac{\delta-1}{2}\|_{L^{6}}\|\nabla u_0\|_{H^1}\bigr)
\\
\leq& C_0^\beta.
\end{split}
\end{equation}

Finally, letting $\tau \to 0$ in \eqref{35}, we can obtain \eqref{e313} after using \eqref{317}.
%\begin{equation}\label{318}
%\|\rho^\frac{1-\delta}{2}u_t(t)\|_{L^2}^2+\int_0^t\|\nabla u_t(s)\|_{L^2}^2ds
%\leq C_0^\beta+C\int_0^t\mathcal{E}^\beta(s)ds.
%\end{equation}
The proof of Lemma \ref{L3.3} is completed.
\end{proof}

Next, we need the following weighted estimates for $\mathcal{L}u$ with negative power  of $\rho$, as well as the  $L^2$-bounds for  both $\curl u$ and $\div u$, which are used for deriving the estimates on $\|\nabla u\|_{L^2}^2$.

\begin{lemma}\label{L3.2}
	Let $(\rho,u)$ and $T_1$ be as in Lemma \ref{L3.1}. Then for all $t\in (0,T_1]$,
	\begin{equation}\label{e310}
		\sup_{0\leq s\leq t}\big(\|\curl u\|_{L^2}^2+\|\div u\|_{L^2}^2\big)+\int_0^t\|\rho^\frac{\delta-1}{2}\mathcal{L}u\|_{L^2}^2ds
		\leq CC_0^\beta+C\int_0^t\mathcal{E}^\beta ds.
	\end{equation}
\end{lemma}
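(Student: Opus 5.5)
We will multiply the momentum equation \eqref{9}$_2$ by $\rho^{\delta-1}$ and pair it with $\mathcal{L}u$; equivalently, we test
\begin{equation*}
u_t-\rho^{\delta-1}\mathcal{L}u=-u\cdot\nabla u-\tfrac{a\gamma}{\gamma-\delta}\rho^{\delta-1}\nabla\rho^{\gamma-\delta}+\delta\rho^{\delta-1}\nabla\log\rho\cdot\mathcal{S}(u)
\end{equation*}
against $-\mathcal{L}u$. Since $\mathcal{L}u=(2\mu+\lambda)\nabla\div u-\mu\,\curl\curl u$, we have
\begin{equation*}
-\int u_t\cdot\mathcal{L}u\,dx=\frac12\frac{d}{dt}\Bigl(\mu\|\curl u\|_{L^2}^2+(2\mu+\lambda)\|\div u\|_{L^2}^2\Bigr)+\text{boundary terms}.
\end{equation*}
The boundary terms come from $\int\curl\curl u\cdot u_t$ and $\int\nabla\div u\cdot u_t$. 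Because $u_t\cdot n=0$, the divergence part gives no boundary contribution. The curl part gives $\int_{\partial\Omega}(\curl u\times n)\cdot u_t\,dS=-\int_{\partial\Omega}Au\cdot u_t\,dS=-\frac12\frac{d}{dt}\int_{\partial\Omega}u\cdot A\cdot u\,dS$, and on $\partial B_R$ the boundary term vanishes. The left side then also carries $\|\rho^{\frac{\delta-1}{2}}\mathcal{L}u\|_{L^2}^2$. Integrating in time, we move $\mu\int_{\partial\Omega}u\cdot A\cdot u\,dS$ to the right and bound it at time $t$ by the trace theorem,
\begin{equation*}
\int_{\partial\Omega}|u|^2dS\le C\|u\|_{L^2(\Omega_0)}\|u\|_{H^1(\Omega_0)}\le \epsilon\|\nabla u\|_{L^2}^2+C\|\rho^{\delta-1}\|_{L^\infty(\Omega_0)}\|\rho^{\frac{1-\delta}{2}}u\|_{L^2}^2 .
\end{equation*}
Here we use \eqref{rho_inf} and the lower bound of $\rho$ on $\Omega_0$. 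We then absorb $\epsilon\|\nabla u\|_{L^2}^2$ via Lemma \ref{l22}, which controls $\|\nabla u\|_{L^2}$ by $\|\curl u\|_{L^2}+\|\div u\|_{L^2}+\|u\|_{L^2(\Omega_0)}$, and the last piece is again bounded by the weighted $L^2$ norm. Finally, Lemma \ref{L3.1} supplies $\sup_s\|\rho^{\frac{1-\delta}{2}}u\|_{L^2}^2\le CC_0^\beta+C\int_0^t\mathcal{E}^\beta ds$.

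The right-hand side terms will be treated by Cauchy--Schwarz against $\rho^{\frac{\delta-1}{2}}\mathcal{L}u$, writing each as $\rho^{\frac{\delta-1}{2}}\times(\cdot)$. The convection term is $\int u\cdot\nabla u\cdot\mathcal{L}u$. We write it as $\|\rho^{\frac{1-\delta}{2}}\|_{L^\infty}\|u\|_{L^\infty}\|\nabla u\|_{L^2}\|\rho^{\frac{\delta-1}{2}}\mathcal{L}u\|_{L^2}$ and bound it using \eqref{rho_sup} and \eqref{e39}. The pressure term satisfies $\rho^{\frac{\delta-1}{2}}\nabla\rho^{\gamma-\delta}=c\nabla\rho^{\gamma-\frac{1+\delta}{2}}$, which is bounded in $L^2$ by $\mathcal{E}$. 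The viscous term satisfies $\rho^{\frac{\delta-1}{2}}\nabla\log\rho=c\nabla\rho^{\frac{\delta-1}{2}}$, so it is bounded by $\|\nabla\rho^{\frac{\delta-1}{2}}\|_{L^6}\|\nabla u\|_{L^3}\le C\mathcal{E}^\beta$ via \eqref{e39}. Young's inequality then absorbs $\frac12\|\rho^{\frac{\delta-1}{2}}\mathcal{L}u\|_{L^2}^2$.

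Integrating over $(0,t)$, the initial values are bounded by $\|\nabla u_0\|_{L^2}^2+\int_{\partial\Omega}|u_0|^2\le CC_0^\beta$, and \eqref{e310} follows. We expect the main obstacle to be the indefinite boundary term $\int_{\partial\Omega}u\cdot A\cdot u$, since $A$ need not be semi-definite. Its control relies on the lower bound of $\rho$ on $\Omega_0$ through \eqref{rho_inf}, together with Lemma \ref{L3.1} for the weighted $L^2$ norm. A secondary check is that the time derivative really produces exactly the boundary term with the correct sign; this holds because $A$ is symmetric and time-independent.
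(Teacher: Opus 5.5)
Your interior estimates match the paper's, with the same multiplier $-\rho^{\delta-1}\mathcal{L}u$ and the same weighted Cauchy--Schwarz splitting. The treatment of the boundary term, however, has a genuine gap.

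Once you write $\int_{\partial\Omega}u\cdot A\cdot u_t\,dS=\frac12\frac{d}{dt}\int_{\partial\Omega}u\cdot A\cdot u\,dS$, you must control two quantities pointwise at time $t$: the boundary integral $\int_{\partial\Omega}u\cdot A\cdot u\,dS$, and the term $\|u\|_{L^2(\Omega_0)}$ produced by Lemma \ref{l22}. You bound both through $\|\rho^{\delta-1}(t)\|_{L^\infty(\Omega_0)}\|\rho^{\frac{1-\delta}{2}}u(t)\|_{L^2}^2$. But \eqref{rho_inf} only gives $\|\rho^{\delta-1}(t)\|_{L^\infty(\Omega_0)}\le C\mathcal{E}^\beta(t)$. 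So what your argument actually proves is
\[
\sup_{0\le s\le t}\big(\|\curl u\|_{L^2}^2+\|\div u\|_{L^2}^2\big)+\int_0^t\|\rho^{\frac{\delta-1}{2}}\mathcal{L}u\|_{L^2}^2ds\le CC_0^\beta+C\int_0^t\mathcal{E}^\beta ds+C\sup_{0\le s\le t}\mathcal{E}^\beta(s)\Big(CC_0^\beta+C\int_0^t\mathcal{E}^\beta ds\Big).
\]
This is not \eqref{e310}. The unknown $\mathcal{E}^\beta$, with $\beta>1$, sits outside the time integral with a coefficient that is not small, and the closing argument in Proposition \ref{p1} cannot absorb it. You also cannot invoke Lemma \ref{L3.6} to get a uniform lower bound on $\rho$ in $\Omega_0$, because its proof uses \eqref{e310}. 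The pointwise-in-time treatment is exactly what the paper does in Section 5 (see \eqref{trace14}). It works there only because \eqref{4.1} bounds $\|\rho^{\frac{\delta-1}{2}}\|_{L^\infty(\Omega_0)}$ by hypothesis.

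The paper avoids the problem by never evaluating the boundary term at a fixed time. It keeps $2\mu\int_{\partial\Omega}u\cdot A\cdot u_t\,dS$ under the time integral and bounds it by the trace theorem on $\Omega_0$ by $C\mathcal{E}^\beta+C\|\nabla u_t\|_{L^2}^2$, as in \eqref{trace2}. It then controls $\int_0^t\|\nabla u_t\|_{L^2}^2ds$ with Lemma \ref{L3.3}, which is proved before this lemma.

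Your route can also be repaired, even without Lemma \ref{L3.3}. Bound the boundary integral at time $t$ by $\epsilon\|\nabla u(t)\|_{L^2}^2+C\|u(t)\|_{L^2(\Omega_0)}^2$, and estimate
\[
\|u(t)\|_{L^2(\Omega_0)}^2\le\|u_0\|_{L^2(\Omega_0)}^2+2\int_0^t\|\rho^{\delta-1}\|_{L^\infty(\Omega_0)}\|\rho^{\frac{1-\delta}{2}}u\|_{L^2}\|\rho^{\frac{1-\delta}{2}}u_t\|_{L^2}ds\le CC_0^\beta+C\int_0^t\mathcal{E}^\beta ds.
\]
Now the negative power of $\rho$ appears only inside the time integral. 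The same bound handles the $\|u\|_{L^2(\Omega_0)}$ term from Lemma \ref{l22} when you absorb $\epsilon\|\nabla u(t)\|_{L^2}^2$.
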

\begin{proof}
Multiplying %the momentum equation
$\eqref{9}_2$ by $-2\rho^{\delta-1}\mathcal{L}u$ and integrating the resulting equality by parts leads to
\begin{equation}
\begin{split}\label{trz}
&\frac{d}{dt}\left(\mu\|\mathrm{curl}u\|_{L^2}^2
+(2\mu+\lambda)\|\mathrm{div}u\|_{L^2}^2\right)
+2\|\rho^\frac{\delta-1}{2}\mathcal{L}u\|_{L^2}^2+2\mu\int_{\partial\Omega} u\cdot A\cdot u_tdS\\
&\leq C\int\bigl(\rho^\frac{1-\delta}{2}|u||\nabla u||\rho^\frac{\delta-1}{2}\mathcal{L}u|+|\nabla\rho^{\gamma-\frac{1+\delta}{2}}||\rho^\frac{\delta-1}{2}\mathcal{L}u|
+|\nabla\rho^\frac{\delta-1}{2}||\nabla u||\rho^\frac{\delta-1}{2}\mathcal{L}u|\bigr)dx\\
&\leq C\Big(\|\rho\|_{L^\infty}^{(1-\delta)/2}\|u\|_{L^\infty}\|\nabla u\|_{L^2}+\|\nabla\rho^{\gamma-\frac{1+\delta}{2}}\|_{L^2}
+\|\nabla\rho^\frac{\delta-1}{2}\|_{L^{6}}\|\nabla u\|_{L^3}\Big)\|\rho^\frac{\delta-1}{2}\mathcal{L}u\|_{L^2}\\
&\leq \|\rho^\frac{\delta-1}{2}\mathcal{L}u\|_{L^2}^2+C\mathcal{E}^\beta,
\end{split}
\end{equation}
where we used Lemma \ref{G-N}, \eqref{qaqa}, \eqref{rho_sup}, and \eqref{e39}.
Similar to \eqref{trace1}, the boundary term on the right hand of  \eqref{trz} can be governed as
\begin{align}\label{trace2}
\int_{\partial\Omega} u\cdot A\cdot u_tdS\leq C\||u||u_t|\|_{W^{1,1}(\Omega_0)}  &\leq C\mathcal{E}^\beta+C\|\nabla u_t\|_{L^2}^2.
\end{align}

Integrating   \eqref{trz} over $(0,t)$, we obtain \eqref{e310} after using \eqref{trace2} and \eqref{e313}. The proof of Lemma \ref{L3.2} is finished.
\end{proof}

%Since $\gamma-\frac{1+\delta}{2}>0$, 
The following lemma is concerning the  estimate of $\rho$ with positive power $\gamma-\frac{1+\delta}{2}>0$.
\begin{lemma}\label{L3.5}
	Let $(\rho,u)$ and $T_1$ be as in Lemma \ref{L3.1}. Then for all $t\in (0,T_1]$,
	\begin{equation}\label{e336}
		\sup_{0\leq s\leq t} \|\rho^{\gamma-\frac{1+\delta}{2}}\|_{W^{1,6}\cap D^1\cap D^2}\leq CC_0^\beta+C\int_0^t\mathcal{E}^\beta ds.
	\end{equation}
\end{lemma}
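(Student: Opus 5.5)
Set $\phi\triangleq\rho^{\gamma-\frac{1+\delta}{2}}$ and $\kappa\triangleq\gamma-\frac{1+\delta}{2}>0$. The plan is to run standard transport estimates for $\phi$, for its gradient in $L^2\cap L^6$, and for its Hessian in $L^2$. We bound the right-hand sides by $C\mathcal{E}^\beta$ using \eqref{rho_sup}, \eqref{e39} and the already established control of $\nabla u$. The one quantity not already in $\mathcal{E}$ is $\|\nabla^3u\|_{L^2}$, which we will integrate in time.

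\textbf{Transport equation.} Multiplying $\eqref{9}_1$ by $\kappa\rho^{\kappa-1}$ gives
\begin{equation*}
\phi_t+u\cdot\nabla\phi+\kappa\phi\,\mathrm{div}u=0 .
\end{equation*}

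\textbf{Lower-order bounds.}
\begin{itemize}
\item $L^6$ of $\phi$: multiply by $|\phi|^4\phi$ and integrate. The boundary terms vanish since $u\cdot n=0$ on $\partial\Omega_R$. This gives $\frac{d}{dt}\|\phi\|_{L^6}\le C\|\nabla u\|_{L^\infty}\|\phi\|_{L^6}$.
\item Gradient: apply $\nabla$ to get $(\nabla\phi)_t+u\cdot\nabla\nabla\phi+\nabla u\cdot\nabla\phi+\kappa\nabla\phi\,\mathrm{div}u+\kappa\phi\nabla\mathrm{div}u=0$. Multiply by $|\nabla\phi|^{p-2}\nabla\phi$ with $p=2,6$ and integrate by parts in the convective term, again using $u\cdot n=0$. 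This yields
\begin{equation*}
\frac{d}{dt}\|\nabla\phi\|_{L^p}\le C\|\nabla u\|_{L^\infty}\|\nabla\phi\|_{L^p}+C\|\phi\|_{L^\infty}\|\nabla^2u\|_{L^p}.
\end{equation*}
\item For $p=6$ we use $\|\nabla^2u\|_{L^6}\le C\|\nabla^2u\|_{H^1}$.
\item We use $\|\nabla u\|_{L^\infty}\le C\|\nabla u\|_{W^{1,6}}\le C(\|\nabla u\|_{L^2}+\|\nabla^2u\|_{H^1})$, by Lemma \ref{G-N} on $\Omega_R$.
\end{itemize}

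\textbf{Hessian.} Differentiating once more gives the same structure plus the terms $\nabla^2u\,\nabla\phi$ and $\nabla\phi\,\nabla^2 u$, bounded in $L^2$ by $\|\nabla^2u\|_{L^3}\|\nabla\phi\|_{L^6}$, and the term $\phi\nabla^2\mathrm{div}u$, bounded by $\|\phi\|_{L^\infty}\|\nabla^3u\|_{L^2}$. Hence
\begin{equation*}
\frac{d}{dt}\|\phi\|_{W^{1,6}\cap D^1\cap D^2}\le C\mathcal{E}^\beta\bigl(1+\|\nabla^3u\|_{L^2}\bigr).
\end{equation*}
Gronwall on a short interval, together with the initial bound by $C_0$, then reduces the claim to showing $\int_0^t\|\nabla^3 u\|_{L^2}\,ds\le CC_0^\beta+C\int_0^t\mathcal{E}^\beta ds$. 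With Cauchy–Schwarz, $\int(1+\|\nabla^3u\|)\mathcal{E}^\beta\le C\int(\mathcal{E}^\beta+\|\nabla^3u\|^2)$, so it suffices to bound $\|\nabla^3u\|_{L^2}^2$ by $C\mathcal{E}^\beta(1+\|\nabla u_t\|_{L^2}^2)$. The time integral of $\|\nabla u_t\|_{L^2}^2$ is controlled by Lemma \ref{L3.3}.

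\textbf{Main obstacle: $\|\nabla^3 u\|_{L^2}$.} I would apply Lemma \ref{elliptic_estimate} with $k=1$ to \eqref{e38}, giving $\|\nabla^3u\|_{L^2}\le C\|F\|_{H^1}+C\|\nabla u\|_{L^2}$, where $F$ is the right side of \eqref{e38}. The terms of $F$ are handled as follows.
\begin{itemize}
\item $\rho^{1-\delta}u_t$: its gradient is $\nabla\rho^{1-\delta}u_t+\rho^{1-\delta}\nabla u_t$. Since $\nabla\rho^{1-\delta}=-2\rho^{1-\delta}\nabla\rho^{\frac{\delta-1}{2}}\cdot\rho^{\frac{1-\delta}{2}}$, we get the bound $C\|\rho\|_{L^\infty}^\beta(\|\nabla\rho^{\frac{\delta-1}{2}}\|_{L^6}\|u_t\|_{L^3}+\|\nabla u_t\|_{L^2})$. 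Here $\|u_t\|_{L^3}\le C\|\rho^{\frac{1-\delta}2}u_t\|_{L^2(\Omega_0)}\cdot(\ldots)+\|\nabla u_t\|_{L^2}$, using Lemma \ref{l22}-type localization and \eqref{rho_inf}. The result is at most $C\mathcal{E}^\beta(1+\|\nabla u_t\|_{L^2})$.
\item $\nabla\rho^{\gamma-\delta}$: write it as $c\,\rho^{\frac{1-\delta}{2}}\nabla\phi$. Its $H^1$ norm involves $\nabla^2\phi$ and the product $\nabla\rho^{\frac{1-\delta}2}\nabla\phi$.
\item $\nabla\log\rho\cdot S(u)=c\,\rho^{\frac{1-\delta}{2}}\nabla\rho^{\frac{\delta-1}2}S(u)$: its gradient requires $\nabla^2\rho^{\frac{\delta-1}2}\in L^2$ times $\nabla u\in L^\infty$, and $\nabla\rho^{\frac{\delta-1}2}\in L^6$ times $\nabla^2u\in L^3$.
\end{itemize}
The delicate point is that $\|\nabla u\|_{L^\infty}$ and $\|\nabla^2u\|_{L^3}$ themselves involve $\nabla^3u$. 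These are absorbed by interpolation, $\|\nabla^2u\|_{L^3}\le C\|\nabla^2u\|_{L^2}^{1/2}\|\nabla^2 u\|_{H^1}^{1/2}$, with the coefficient $\mathcal{E}^\beta$ handled by Young's inequality to yield $\frac12\|\nabla^3u\|_{L^2}$.
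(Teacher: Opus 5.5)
Your plan follows the paper's proof: transport estimates for $\varphi=\rho^{\gamma-\frac{1+\delta}{2}}$ in $L^6$, for $\nabla\varphi$ in $L^2\cap L^6$ and for $\nabla^2\varphi$ in $L^2$, closed by the $H^3$ bound from Lemma \ref{elliptic_estimate} with $k=1$ and by Lemma \ref{L3.3}. However, two steps as you wrote them do not give an $R$-independent bound of the stated form.

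The first is the final bookkeeping. You split $\mathcal{E}^\beta\|\nabla^3u\|_{L^2}\le\mathcal{E}^{2\beta}+\|\nabla^3u\|_{L^2}^2$ and then use $\|\nabla^3u\|_{L^2}^2\le C\mathcal{E}^\beta(1+\|\nabla u_t\|_{L^2}^2)$. This leaves $\int_0^t\mathcal{E}^\beta\|\nabla u_t\|_{L^2}^2ds$. Lemma \ref{L3.3} does not bound that by $CC_0^\beta+C\int_0^t\mathcal{E}^\beta ds$; you only get $\sup_s\mathcal{E}^\beta\int_0^t\|\nabla u_t\|_{L^2}^2ds$, which cannot close Proposition \ref{p1}. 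You also cannot drop the $\mathcal{E}$-factor in front of $\|\nabla u_t\|_{L^2}^2$, because $\nabla(\mathcal{L}u)$ contains $\rho^{1-\delta}\nabla u_t$ and $\|\rho\|_{L^\infty}$ is only bounded by $\mathcal{E}^\beta$.

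The fix is to apply Young in the other order. Start from the linear bound \eqref{g3u}, $\|\nabla^3u\|_{L^2}\le C\mathcal{E}^\beta(1+\|\nabla u_t\|_{L^2})$, which you essentially derive. Then $\mathcal{E}^\beta\|\nabla^3u\|_{L^2}\le C\mathcal{E}^{2\beta}+C\mathcal{E}^{2\beta}\|\nabla u_t\|_{L^2}\le C\mathcal{E}^{4\beta}+\|\nabla u_t\|_{L^2}^2$, so $\|\nabla u_t\|_{L^2}^2$ carries a constant coefficient. This is \eqref{260129}; integrate it in time directly, with no Gronwall step.

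The second is the term $\nabla(\rho^{1-\delta}u_t)$. You move the whole weight into $\|\rho\|_{L^\infty}^\beta$ and are left with the unweighted $\|u_t\|_{L^3(\Omega_R)}$. You then propose to bound it by a localized weighted $L^2(\Omega_0)$ norm plus $\|\nabla u_t\|_{L^2}$. No such inequality holds with a constant independent of $R$. Take a smooth field $f$ tangent to spheres, supported in $|x|>2R_0$ and of size $|x|^{-1}$ there. It vanishes on $\Omega_0$ and has bounded $\|\nabla f\|_{L^2(\Omega_R)}$, yet $\|f\|_{L^3(\Omega_R)}\sim(\log R)^{1/3}$.

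The only $R$-uniform control of unweighted $u_t$ away from $\Omega_0$ is $\|u_t\|_{L^6}\le C\|\nabla u_t\|_{L^2}$, and the weight $\rho^{\frac{1-\delta}{2}}$ in $\mathcal{E}$ degenerates in the far field. So you must keep the weight attached to $u_t$. Write $\nabla\rho^{1-\delta}\,u_t=2\nabla\rho^{\frac{1-\delta}{2}}\,(\rho^{\frac{1-\delta}{2}}u_t)$ and interpolate:
$\|\rho^{\frac{1-\delta}{2}}u_t\|_{L^3}\le\|\rho^{\frac{1-\delta}{2}}u_t\|_{L^2}^{1/2}\|\rho\|_{L^\infty}^{\frac{1-\delta}{4}}\|u_t\|_{L^6}^{1/2}$.
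This gives $C\mathcal{E}^\beta\|\nabla u_t\|_{L^2}^{1/2}$, as in the paper. With these two corrections your argument is the paper's proof.
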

\begin{proof}
First, it follows from Lemma \ref{G-N}, \ref{elliptic_estimate}, \eqref{9}$_2$, \eqref{9}$_4$, \eqref{e39}, and \eqref{rho_sup} that
\begin{equation}\label{g3u1}\notag
\begin{split}
	\|\nabla^3u\|_{L^2}&\leq C\|\mathcal{L}u\|_{H^1}+C\|\nabla u\|_{L^2}\\
	&\leq C\Big(\|\nabla\rho^\frac{1-\delta}{2}\|_{L^{6}}\|\rho^\frac{1-\delta}{2}u_t\|_{L^{3}}
	+\|\rho^{1-\delta}\|_{L^\infty}\|\nabla u_t\|_{L^2}+\|\nabla\rho^{1-\delta}\|_{L^{6}}\|u\|_{L^6}\|\nabla u\|_{L^{6}}\\
	&\quad+\|\rho^{1-\delta}\|_{L^\infty}\|\nabla u\|_{L^3}\|\nabla u\|_{L^6}
	+\|\rho^{1-\delta}\|_{L^\infty}\|u\|_{L^\infty}\|\nabla^2u\|_{L^2}
	+\|\nabla^2\rho^{\gamma-\delta}\|_{L^2}\\
	&\quad+\|\rho^\frac{1-\delta}{2}\|_{L^\infty}\|\nabla^2\rho^\frac{\delta-1}{2}\|_{L^{2}}\|\nabla u\|_{L^{\infty}}+\|\nabla\rho^\frac{1-\delta}{2}\|_{L^{6}}\|\nabla\rho^\frac{\delta-1}{2}\|_{L^{6}}\|\nabla u\|_{L^{6}}\\
	&\quad+\|\rho^\frac{1-\delta}{2}\|_{L^\infty}\|\nabla\rho^\frac{\delta-1}{2}\|_{L^{6}}\|\nabla^2u\|_{L^{3}}\Big)+C\mathcal{E}^\beta\\
	&\leq C\Big(\|\rho\|_{L^\infty}^{5(1-\delta)/4}
    \|\nabla\rho^{\frac{\delta-1}{2}}\|_{L^{6}}
    \|\rho^\frac{1-\delta}{2}u_t\|_{L^2}^{1/2}\|\nabla u_t\|_{L^2}^{1/2}+\|\rho\|_{L^\infty}^{1-\delta}\|\nabla u_t\|_{L^2}\\
	&\quad
	+\|\rho\|_{L^\infty}^{3(1-\delta)/2}\|\nabla\rho^{\frac{\delta-1}{2}}\|_{L^{6}}
    \|\nabla u\|_{H^1}^2+\|\rho\|_{L^\infty}^{1-\delta}
    \|\nabla u\|_{H^1}^2
	+\|\rho\|_{L^\infty}^{(1-\delta)/2}\|\nabla^2\rho^{\gamma-\frac{1+\delta}{2}}\|_{L^2}\\
	&\quad +\|\rho\|_{L^\infty}^{1-\delta}\|\nabla\rho^{\gamma-\frac{1+\delta}{2}}\|_{L^3}\|\nabla\rho^{\frac{\delta-1}{2}}\|_{L^{6}}
     +\|\rho\|_{L^\infty}^{(1-\delta)/2}
    \|\nabla\rho^{\frac{\delta-1}{2}}\|_{L^6\cap D^1}\|\nabla^2u\|_{L^2}^{1/2}\|\nabla^2u\|_{H^1}^{1/2}\\
	&\quad+\|\rho\|_{L^\infty}^{1-\delta}\|\nabla\rho^{\frac{\delta-1}{2}}\|_{L^{6}}^2\|\nabla u\|_{H^1}+\mathcal{E}^\beta\Big)
\\
	&\leq C\mathcal{E}^\beta+C\mathcal{E}^\beta\|\nabla u_t\|_{L^2}+\frac{1}{2}\|\nabla^3u\|_{L^2},
\end{split}
\end{equation}
which directly yields that
\begin{equation}\label{g3u}
\begin{split}
	\|\nabla^3u\|_{L^2} \leq C\mathcal{E}^\beta+C\mathcal{E}^\beta\|\nabla u_t\|_{L^2}.
\end{split}
\end{equation}

Next, multiplying the mass equation $\eqref{9}_1$ by $(\gamma-\frac{1+\delta}{2})\rho^{\gamma-\frac{1+\delta}{2}-1}$, we find that $\varphi\triangleq\rho^{\gamma-\frac{1+\delta}{2}}$ satisfies
\begin{equation}\label{310}
\varphi_t+u\cdot\nabla\varphi+(\gamma-\frac{1+\delta}{2})\varphi\mathrm{div}u=0.
\end{equation}
 Integrating \eqref{310} multiplied by $\varphi^5$   on $\Omega_R$ leads to
\begin{equation}\label{e337}
	\frac{d}{dt}\|\varphi\|_{L^6}^6
	\leq C\|\mathrm{div} u\|_{L^\infty}\|\varphi\|_{L^6}^6.
\end{equation}
Operating  $\nabla$ to \eqref{310}, one obtains after multiplying the resulting equality by $|\nabla\varphi|^{r-2}\nabla\varphi$ with $r=2,6$  and integrating by parts on $\Omega_R$ that
\begin{equation}\label{e339}
\begin{split}
	\frac{d}{dt}\|\nabla\varphi\|_{L^r}^r
	\leq& C\int\left(|\nabla\varphi|^r|\nabla u|+|\varphi||\nabla\varphi|^{r-1}|\nabla\mathrm{div}u|\right)dx\\
	\leq&C\bigl(\|\nabla u\|_{L^\infty}\|\nabla\varphi\|_{L^r}^r+\|\varphi\|_{L^\infty}\|\nabla\varphi\|_{L^r}^{r-1}\|\nabla\mathrm{div}u\|_{L^r}\bigr).
\end{split}
\end{equation}
Furthermore, operating  $\nabla^2$ to \eqref{310}, multiplying it by $\nabla^2\varphi$ and integrating by parts on $\Omega_R$, one has
\begin{equation}\label{e340}
\begin{split}
\frac{d}{dt}\|\nabla^2\varphi\|_{L^2}^2
\leq& C\int\bigl(|\nabla^2\varphi|^2|\nabla u|+|\nabla\varphi||\nabla^2\varphi||\nabla^2u|+|\varphi||\nabla^2\varphi||\nabla^2\mathrm{div}u|\bigr)dx\\
\leq&C\bigl(\|\nabla u\|_{L^\infty}\|\nabla^2\varphi\|_{L^2}^2+\|\nabla\varphi\|_{L^6}\|\nabla^2\varphi\|_{L^2}\|\nabla^2u\|_{L^3}\\
&\quad\quad+\|\varphi\|_{L^\infty}\|\nabla^2\varphi\|_{L^2}\|\nabla^3u\|_{L^2}\bigr).
\end{split}
\end{equation}

Finally, we deduce from \eqref{e337}-\eqref{e340}, \eqref{g3u}, and Gagliardo-Nirenberg inequality that
\begin{equation}\label{260129}
\frac{d}{dt}\|\varphi\|_{W^{1,6}\cap D^1\cap D^2}\leq C\|\nabla u\|_{H^2}\|\varphi\|_{W^{1,6}\cap D^1\cap D^2}
\leq C\mathcal{E}^\beta+C\|\nabla u_t\|_{L^2}^2.
\end{equation}
Thus, \eqref{e336} is derived from integrating \eqref{260129} over $(0,t)$ and using \eqref{e313}. 
%\begin{equation}%\label{3.36}
%\begin{split}\nonumber
%\sup_{0\leq s\leq t}\|\varphi\|_{W^{1,6}\cap D^1\cap D^2}
%&\leq C\|\varphi_0\|_{W^{1,6}\cap D^1\cap D^2}\exp\bigg(C\int_0^t\|\nabla u\|_{H^2}ds\bigg)\\
%&\leq C\exp\bigg(C\int_0^t\bigl(\mathcal{E}^\beta+\|\nabla u_t\|_{L^2}^2\bigr)ds\bigg)\\
%&\leq C\exp\bigg(C\int_0^t\mathcal{E}^\beta ds\bigg).
%\end{split}
%\end{equation}
The proof of Lemma \ref{L3.5} is finished.
\end{proof}

Now, we are in a position to derive the crucial estimates on $\rho^\frac{\delta-1}{2}$. 
\begin{lemma}\label{L3.6}
	Let $(\rho,u)$ and $T_1$ be as in Lemma \ref{L3.1}. Then for all $t\in (0,T_1]$,
	\begin{equation}\label{e343}
		\sup_{0\leq s\leq t}\bigr(\|\rho^\frac{\delta-1}{2}\|_{L^6(\Omega_0)}+\|\nabla\rho^\frac{\delta-1}{2}\|_{L^{6}\cap D^{1}}\bigr)
		\leq CC_0^\beta+C\int_0^t\mathcal{E}^\beta ds.
	\end{equation}
\end{lemma}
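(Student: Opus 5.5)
Set $\psi\triangleq\rho^{\frac{\delta-1}{2}}$. The plan is to derive a transport equation for $\psi$ from the mass equation, do energy estimates on $\psi$, $\nabla\psi$ and $\nabla^2\psi$, and reduce everything to bounds on $\nabla u$, $\nabla^2u$ and on the weighted quantity $w\triangleq\rho^{\frac{\delta-1}{2}}\nabla\div u$. Multiplying \eqref{9}$_1$ by $\frac{\delta-1}{2}\rho^{\frac{\delta-3}{2}}$ gives
\begin{equation*}
\psi_t+u\cdot\nabla\psi-\tfrac{1-\delta}{2}\psi\,\div u=0 .
\end{equation*}
Applying $\nabla$, the forcing term $\frac{1-\delta}{2}\psi\nabla\div u=\frac{1-\delta}{2}w$ appears, so
\begin{equation*}
(\nabla\psi)_t+\nabla(u\cdot\nabla\psi)-\tfrac{1-\delta}{2}\nabla\psi\,\div u=\tfrac{1-\delta}{2}w .
\end{equation*}

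\emph{Step 1 (low-order terms).} Multiply the $\psi$ equation by $\psi^5\eta$, with $\eta$ the cut-off from \eqref{rro1} (or a slightly larger one). The commutator $u\cdot\nabla\eta\,\psi^6$ is supported in $\Omega_0$, and $u\cdot n=0$ kills the boundary term. This gives $\frac{d}{dt}\|\psi\|_{L^6(\Omega_0)}\le C\|\nabla u\|_{L^\infty}\|\psi\|_{L^6(\Omega_0)}+$ (lower order), and by \eqref{g3u} the right side is bounded by $C\mathcal E^\beta+C\|\nabla u_t\|_{L^2}^2$. Next, test the $\nabla\psi$ equation with $|\nabla\psi|^4\nabla\psi$, handling the transport term by integration by parts with $u\cdot n=0$. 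This yields
\begin{equation*}
\frac{d}{dt}\|\nabla\psi\|_{L^6}\le C\|\nabla u\|_{L^\infty}\|\nabla\psi\|_{L^6}+C\|w\|_{L^6}.
\end{equation*}
Applying $\nabla$ once more and testing with $\nabla^2\psi$ gives
\begin{equation*}
\frac{d}{dt}\|\nabla^2\psi\|_{L^2}\le C\|\nabla u\|_{L^\infty}\|\nabla^2\psi\|_{L^2}+C\|\nabla\psi\|_{L^6}\|\nabla^2u\|_{L^3}+C\|\nabla w\|_{L^2}.
\end{equation*}
Since $\|w\|_{L^6}\le C\|w\|_{L^2}+C\|\nabla w\|_{L^2}$ (Sobolev on $\Omega_R$ with uniform constants), it suffices to show $\int_0^t\|w\|_{H^1}\,ds\le CC_0^\beta+C\int_0^t\mathcal E^\beta ds$. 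By Cauchy–Schwarz in time, it is enough to bound $\|w\|_{H^1}\le C\mathcal E^\beta(1+\|\nabla u_t\|_{L^2})$ and then invoke Lemma \ref{L3.3}.

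\emph{Step 2 (main obstacle: the bound on $\|w\|_{H^1}$).} This is where the annular domain causes trouble. I would use the decomposition \eqref{260122}, $(2\mu+\lambda)w=\rho^{\frac{\delta-1}{2}}\mathcal Lu+\mu\rho^{\frac{\delta-1}{2}}\curl\curl u$. For the first piece, multiply \eqref{e38} by $\rho^{\frac{\delta-1}{2}}$:
\begin{equation*}
\rho^{\frac{\delta-1}{2}}\mathcal Lu=\rho^{\frac{1-\delta}{2}}(u_t+u\cdot\nabla u)+C\nabla\rho^{\gamma-\frac{1+\delta}{2}}+C\nabla\psi\cdot\mathcal S(u).
\end{equation*}
Its $H^1$ norm is then bounded directly by $\mathcal E^\beta(1+\|\nabla u_t\|_{L^2})$, using $\nabla\rho^{\frac{1-\delta}{2}}=-\rho^{1-\delta}\nabla\psi$, \eqref{rho_sup}, \eqref{e39}, \eqref{g3u}, and the bound $\|\nabla\psi\,\nabla^2u\|_{L^2}\le\|\nabla\psi\|_{L^6}\|\nabla^2u\|_{L^3}$. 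For the second piece, $v\triangleq\psi\,\curl\curl u$, apply Lemma \ref{l22}, estimate \eqref{uwkq1}, with $p=2$:
\begin{itemize}
\item $\div v=\nabla\psi\cdot\curl\curl u$ is controlled by $\|\nabla\psi\|_{L^6}\|\nabla^2 u\|_{L^3}$.
\item $\curl v=\nabla\psi\times\curl\curl u+\psi\,\curl\curl\curl u$. Since $\curl\curl\curl u=-\frac1\mu\curl\mathcal Lu$, this equals $-\frac1\mu\curl(\rho^{\frac{\delta-1}{2}}\mathcal Lu)$ up to a $\nabla\psi\times\cdot$ term, so it is bounded by the first piece.
\item $\|v\|_{L^2(\Omega_0)}$ and the trace $\|v\cdot n\|_{H^{1/2}(\partial\Omega)}$ are handled using the lower bound of $\rho$ near the boundary, i.e. $\|\psi\|_{L^\infty(\Omega_0)}+\|\nabla\psi\|_{L^6(\Omega_0)}\le C\mathcal E^\beta$ (cf. \eqref{rho_inf}). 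This gives $\le C\mathcal E^\beta\|u\|_{H^3(\Omega_0)}$.
\item On $\partial B_R$, the condition $\curl u\times n=0$ should make $v\cdot n$ vanish or be harmless (I expect $\curl\curl u\cdot n$ to be a surface divergence of $\curl u\times n$). If not, I would localize with $1-\eta$ instead.
\end{itemize}
The zero-order term $\|v\|_{L^2}$ is controlled the same way via $\|\rho^{\frac{\delta-1}{2}}\mathcal Lu\|_{L^2}$ and $\|\rho^{\frac{\delta-1}{2}}\nabla\div u\|_{L^2}$, and the latter can be absorbed after first estimating $\|w\|_{L^2}$ from the decomposition.

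\emph{Step 3 (closing).} Summing the three differential inequalities and integrating over $(0,t)$ gives $\sup_s(\dots)\le C_0^\beta+C\int_0^t(\mathcal E^\beta+\|\nabla u_t\|_{L^2}^2)\,ds$. Applying Lemma \ref{L3.3} to the $\|\nabla u_t\|_{L^2}^2$ term then yields \eqref{e343}. The delicate point is Step 2: the boundary trace of $\psi\,\curl\curl u$, and making sure the needed $\|\nabla^3u\|$ control enters only linearly in $\|\nabla u_t\|_{L^2}$.
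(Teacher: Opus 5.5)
You have the paper's architecture: transport estimates for $\psi=\rho^{\frac{\delta-1}{2}}$, the splitting $(2\mu+\lambda)w=\psi\mathcal{L}u+\mu\psi\curl\curl u$, a direct $H^1$ bound on $\psi\mathcal{L}u$ from the equation, and Lemma \ref{l22} applied to $v=\psi\curl\curl u$. That step uses $\curl\curl\curl u=-\mu^{-1}\curl\mathcal{L}u$ and $v\cdot n=0$ on $\partial B_R$, and your expectation about the latter is right: $\curl\curl u\cdot n$ is the surface divergence of $\curl u\times n$, cf.\ \eqref{curlu}.

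The gap is the zero-order term $\|w\|_{L^2(\Omega_R)}$. You need it both for $\|w\|_{H^1}$ and for $\|w\|_{L^6}$. Lemma \ref{l22} only involves $\|v\|_{L^2(\Omega_0)}$, and $\|\nabla w\|_{L^2}$ plus local norms cannot control a constant mode uniformly in $R$. The splitting is a single linear relation between two weighted unknowns, $w$ and $v$. Since $\psi\to\infty$ at the far field, neither is bounded by unweighted norms of $\nabla^2u$. Your ``absorption'' is circular: inserting $\|v\|_{L^2}\le\frac{2\mu+\lambda}{\mu}\|w\|_{L^2}+\ldots$ into $\|w\|_{L^2}\le\frac{\mu}{2\mu+\lambda}\|v\|_{L^2}+\ldots$ returns $\|w\|_{L^2}\le\|w\|_{L^2}+\ldots$ with coefficient exactly one.

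What is missing is an approximate orthogonality between the two pieces. Expand
\[
\|\psi\mathcal{L}u\|_{L^2}^2=(2\mu+\lambda)^2\|w\|_{L^2}^2+\mu^2\|v\|_{L^2}^2-2\mu(2\mu+\lambda)\int\rho^{\delta-1}\nabla\div u\cdot\curl\curl u\,dx
\]
and integrate the cross term by parts using $\div\curl\curl u=0$:
\begin{itemize}
\item the interior remainder $\int\div u\,\nabla\rho^{\delta-1}\cdot\curl\curl u\,dx$ is at most $2\|\nabla\psi\|_{L^6}\|\div u\|_{L^3}\|v\|_{L^2}$ and is absorbed by Young;
\item the boundary integral over $\partial B_R$ vanishes by \eqref{curlu};
\item the one over $\partial\Omega$ is controlled via the trace theorem, \eqref{rho_inf} and \eqref{g3u}.
\end{itemize}
This yields \eqref{114}: $\|w\|_{L^2}+\|v\|_{L^2}\le C\|\psi\mathcal{L}u\|_{L^2}+C\mathcal{E}^\beta+C\|\nabla u_t\|_{L^2}$. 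Your observation that $\|\psi\mathcal{L}u\|_{L^2}\le C\mathcal{E}^\beta$ directly from \eqref{zsa1} is correct, so \eqref{e310} is not even needed here. With these, your bound $\|w\|_{H^1}\le C\mathcal{E}^\beta(1+\|\nabla u_t\|_{L^2})$ and Step 3 go through.

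A minor point: the cutoff version of the $\|\psi\|_{L^6(\Omega_0)}$ estimate does not close as written. The left side only controls $\int\eta\psi^6$, while the commutator lives where $\eta<1$. Instead, differentiate $\int_{\Omega_0}\psi^6\,dx$ directly without integrating by parts, and bound $\int_{\Omega_0}\psi^5u\cdot\nabla\psi\,dx$ by $\|u\|_{L^\infty}\|\nabla\psi\|_{L^6}\|\psi\|_{L^6(\Omega_0)}^5$.
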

\begin{proof}
First, one can deduce from  the boundary condition $\mathrm{curl}u\times n=0$ on $\partial B_R$ that
\begin{align}\label{curlu}
\mathrm{curlcurl}u\cdot n=0\,\ \text{on}\ \ \partial B_R.
\end{align}
Indeed,  for all $\vartheta\in C^\infty(\overline{\Omega_R})$ with $\vartheta=\nabla \vartheta=0$ on $\partial\Omega$, we have
\begin{equation}
\begin{split}\nonumber
    \int_{\partial B_R} \mathrm{curlcurl} u\cdot n \vartheta dS&=\int_{\partial \Omega_R} \mathrm{curlcurl} u\cdot n \vartheta dS
    =\int_{\Omega_R} \nabla\vartheta\cdot \mathrm{curlcurl} u dx\\
    &=-\int_{\Omega_R}\mathrm{div}\bigl( \nabla\vartheta\times \mathrm{curl} u\bigr) dx=-\int_{\partial \Omega_R}\bigl( \nabla\vartheta\times \mathrm{curl} u\bigr)\cdot n dS\\
    &=-\int_{\partial \Omega_R}\bigl( \mathrm{curl} u\times  n \bigr)\cdot \nabla\vartheta dS=\int_{\partial B_R} \bigl( \mathrm{curl} u\times  n \bigr)\cdot \nabla\vartheta dS=0,
\end{split}
\end{equation}
which thus yields directly  \eqref{curlu}.  
%\begin{equation}\label{cvt}
%\mathrm{curlcurl}u\cdot n=0,\,\,\ \text{on}\,\,\ \partial \Omega_R.
%\end{equation}
% This fact yields
%\begin{equation}\label{cvt}
%\mathrm{curlcurl}u\cdot n=0,\,\,\ \text{on}\,\,\ \partial \Omega_R.
%\end{equation}

Next, we obtain by direct calculations
\begin{equation}\label{3.32}
\begin{split}
&\|\rho^\frac{\delta-1}{2}\mathcal{L}u\|_{L^2}^2\\
&=(\lambda+2\mu)^2\|\rho^\frac{\delta-1}{2}\nabla \mathrm{div}u\|_{L^2}^2+\mu^2\|\rho^\frac{\delta-1}{2}\mathrm{curlcurl}u\|_{L^2}^2\\
&\quad-2\mu(\lambda+2\mu)\int_{\Omega_R}\rho^{\delta-1}\nabla\mathrm{div}u\cdot\mathrm{curlcurl}udx\\
&=(\lambda+2\mu)^2\|\rho^\frac{\delta-1}{2}\nabla \mathrm{div}u\|_{L^2}^2+\mu^2\|\rho^\frac{\delta-1}{2}\mathrm{curlcurl}u\|_{L^2}^2\\
&\quad-2\mu(\lambda+2\mu)\left(\int_{\partial\Omega_R}\rho^{\delta-1}\mathrm{div}u\mathrm{curlcurl}u\cdot ndS-\int_{\Omega_R}\nabla\rho^{\delta-1}\cdot\mathrm{curlcurl}u\mathrm{div}udx\right)\\
&\geq(\lambda+2\mu)^2\|\rho^\frac{\delta-1}{2}\nabla\mathrm{div}u\|_{L^2}^2+\mu^2/2\|\rho^\frac{\delta-1}{2}\mathrm{curlcurl}u\|_{L^2}^2-C\mathcal{E}^\beta\||\nabla u||\nabla^2 u|\|_{W^{1,1}(\Omega_0)}
\\
&\quad-C\|\nabla\rho^\frac{\delta-1}{2}\|_{L^{6}}^2\|\mathrm{div}u\|_{L^{3}}^2,
%\geq&(\lambda+2\mu)^2\|\nabla\big(\rho^\frac{\delta-1}{2}\mathrm{div}u\big)\|_{L^2}^2
%-C\|\nabla\rho^\frac{\delta-1}{2}\|_{L^{6}}^2\|\mathrm{div}u\|_{L^{3}}^2,
\end{split}
\end{equation}
where we have used \eqref{e39}, \eqref{rho_inf}, \eqref{g3u}, and \eqref{curlu}.
Hence $w\triangleq\rho^\frac{\delta-1}{2}\nabla\mathrm{div}u$ and $\rho^\frac{\delta-1}{2}\mathrm{curlcurl}u$ satisfies the following estimates
%Notice that $w\triangleq\rho^\frac{\delta-1}{2}\nabla\mathrm{div}u$ satisfies \eqref{260122}, one has the following estimates
\begin{equation}\label{114}
\|w\|_{L^2}+\|\rho^\frac{\delta-1}{2}\mathrm{curlcurl}u\|_{L^2}
%\leq C\bigl(\|\rho^\frac{\delta-1}{2}\mathcal{L}u\|_{L^2}+\|\nabla\rho^\frac{\delta-1}{2}\|_{L^{6}}\|\nabla u\|_{H^1}\bigr)
\leq C\|\rho^\frac{\delta-1}{2}\mathcal{L}u\|_{L^2}+ C\mathcal{E}^\beta+C\|\nabla u_t\|_{L^2}.
\end{equation}
Multiplying $\eqref{9}_2$ by $\rho^\frac{\delta-1}{2}$ gives that 
\begin{equation}
	\begin{split}\label{zsa1}
\rho^\frac{\delta-1}{2}\mathcal{L} u=&\rho^\frac{1-\delta}{2}u_t+\rho^\frac{1-\delta}{2}u\cdot\nabla u+\frac{2a\gamma}{2\gamma-1-\delta}\nabla\rho^{\gamma-\frac{1+\delta}{2}}+\frac{2}{1-\delta}\nabla\rho^\frac{\delta-1}{2}\cdot\mathcal{S}(u),
    \end{split}
\end{equation}
and thus 
\begin{equation}
\begin{split}\label{zsa2}
\mu\rho^\frac{\delta-1}{2}\curl\curl u=&-\rho^\frac{1-\delta}{2}u_t-\rho^\frac{1-\delta}{2}u\cdot\nabla u-\frac{2a\gamma}{2\gamma-1-\delta}\nabla\rho^{\gamma-\frac{1+\delta}{2}}-\frac{2}{1-\delta}\nabla\rho^\frac{\delta-1}{2}\cdot\mathcal{S}(u)\\
&+(2\mu+\lambda)\nabla\big(\rho^\frac{\delta-1}{2}\div u\big)-(2\mu+\lambda)\nabla\rho^\frac{\delta-1}{2}\div u.
\end{split}
\end{equation}
It follows from Lemma \ref{G-N}, \eqref{zsa1}-\eqref{zsa2}, \eqref{e39}, \eqref{rho_sup}, \eqref{g3u}, and \eqref{114} that
\begin{equation}%\label{e349}
\begin{split}\nonumber
&\|\nabla(\rho^\frac{\delta-1}{2}\mathcal{L} u)\|_{L^2}+\|\curl\left(\rho^\frac{\delta-1}{2}\curl\curl u\right)\|_{L^2}\\
&\leq C\Big(\|\rho^\frac{1-\delta}{2}\|_{L^\infty}\|\nabla\rho^\frac{\delta-1}{2}\|_{L^{6}}\|\rho^\frac{1-\delta}{2}u_t\|_{L^3}
+\|\rho^\frac{1-\delta}{2}\|_{L^\infty}\|\nabla u_t\|_{L^{2}}\\
&\quad+\|\rho^{1-\delta}\|_{L^\infty}\|\nabla\rho^\frac{\delta-1}{2}\|_{L^{6}}\|u\|_{L^6}\|\nabla u\|_{L^6}+\|\rho^\frac{1-\delta}{2}\|_{L^\infty}\|\nabla u\|_{L^{3}}\|\nabla u\|_{L^6}+\|\nabla^2\rho^{\gamma-\frac{1+\delta}{2}}\|_{L^{2}}\\
&\quad+\|\rho^\frac{1-\delta}{2}\|_{L^\infty}\|u\|_{L^\infty}\|\nabla^2u\|_{L^{2}}
+\|\nabla^2\rho^\frac{\delta-1}{2}\|_{L^{2}}\|\nabla u\|_{L^\infty}
+\|\nabla\rho^\frac{\delta-1}{2}\|_{L^{6}}\|\nabla^2u\|_{L^3}\Big)\\
&\leq C\Big(\|\rho\|_{L^\infty}^{3(1-\delta)/4}\|\nabla\rho^\frac{\delta-1}{2}\|_{L^{6}}
\|\rho^\frac{1-\delta}{2}u_t\|_{L^2}^{1/2}\|\nabla u_t\|_{L^2}^{1/2}+\|\rho\|_{L^\infty}^{(1-\delta)/2}\|\nabla u_t\|_{L^2}\\
&\quad+\|\rho\|_{L^\infty}^{1-\delta}\|\nabla\rho^\frac{\delta-1}{2}\|_{L^{6}}\|\nabla u\|_{H^1}^2+\|\rho\|_{L^\infty}^{(1-\delta)/2}\|\nabla u\|_{H^1}^2+\|\nabla^2\rho^{\gamma-\frac{1+\delta}{2}}\|_{L^{2}}\\
&\quad+\|\nabla^2\rho^\frac{\delta-1}{2}\|_{L^{2}}\|\nabla^2u\|_{L^2}^{1/2}\|\nabla^2u\|_{H^1}^{1/2}
+\|\nabla\rho^\frac{\delta-1}{2}\|_{L^{6}}\|\nabla^2u\|_{L^2}^{1/2}\|\nabla^2u\|_{H^1}^{1/2}\Big)\\
&\leq C\mathcal{E}^\beta\bigl(1+\|\nabla u_t\|_{L^2}\bigr).
\end{split}
\end{equation}
Therefore, we can obtain that
\begin{align}\label{98y}
\|w\|_{D^{1}}^2&=\frac{1}{(2\mu+\lambda)^2}\|\rho^\frac{\delta-1}{2}(\mathcal{L}u+\mu\curl\curl u)\|_{D^{1}}^2\nonumber\\
&\leq C\|\nabla(\rho^\frac{\delta-1}{2}\mathcal{L} u)\|_{L^2}^2+C\|\nabla\left(\rho^\frac{\delta-1}{2}\curl\curl u\right)\|_{L^2}^2\nonumber\\
&\leq C\|\nabla(\rho^\frac{\delta-1}{2}\mathcal{L} u)\|_{L^2}^2+C\|\div\left(\rho^\frac{\delta-1}{2}\curl\curl u\right)\|_{L^2}^2+C\|\curl\left(\rho^\frac{\delta-1}{2}\curl\curl u\right)\|_{L^2}^2\nonumber\\
&\quad+C\|\rho^\frac{\delta-1}{2}\curl\curl u\cdot n\|_{H^{1/2}(\partial\Omega)}^2+C\|\rho^\frac{\delta-1}{2}\curl\curl u\|_{L^2}^2\\
&\leq C\|\nabla(\rho^\frac{\delta-1}{2}\mathcal{L} u)\|_{L^2}^2+C\|\nabla\rho^\frac{\delta-1}{2}\|_{L^6}^2\|\nabla^2 u\|_{L^3}^2+C\|\curl\left(\rho^\frac{\delta-1}{2}\curl\curl u\right)\|_{L^2}^2\nonumber\\
&\quad+C\mathcal{E}^\beta\|\curl\curl u\|_{H^1}^2+C\|\rho^\frac{\delta-1}{2}\curl\curl u\|_{L^2}^2\nonumber\\
&\leq C\mathcal{E}^\beta\bigl(1+\|\nabla u_t\|_{L^2}^2\bigr)+C\|\rho^\frac{\delta-1}{2}\mathcal{L}u\|_{L^2}^2,\nonumber
\end{align}
owing to Lemma \ref{l22}, \eqref{rho_inf}, \eqref{curlu}, and \eqref{114}. 
Additionally, Lemma \ref{G-N} combined with \eqref{114} and \eqref{98y} implies
\begin{align}\label{117}
\|w\|_{L^6}\leq C\| w\|_{H^1}\leq C\|\rho^\frac{\delta-1}{2}\mathcal{L}u\|_{L^2}+ C\mathcal{E}^\beta\bigl(1+\|\nabla u_t\|_{L^2}\bigr).
\end{align}

Defined $\varPhi\triangleq\rho^\frac{\delta-1}{2}$, one gets from $\eqref{9}_1$ multiplied by $(\frac{\delta-1}{2})\rho^{\frac{\delta-1}{2}-1}$ that 
\begin{equation}\label{38}
\varPhi_t+u\cdot\nabla\varPhi+\frac{\delta-1}{2}\varPhi\mathrm{div}u=0.
\end{equation}
 Integrating \eqref{38} multiplied  by $|\varPhi|^{4}\varPhi$ over $\Omega_0$,  one gets
\begin{equation}
	\begin{split}\label{popza}
		\frac{d}{dt}\|\varPhi\|_{L^6(\Omega_0)}^6
		\leq& C\int_{\Omega_0}\bigl(|u||\nabla\varPhi||\varPhi|^5
		+|\varPhi|^6|\mathrm{div}u|\bigr)dx\\
		\leq&C\bigl(\|u\|_{L^\infty}\|\nabla\varPhi\|_{L^6}\|\varPhi\|_{L^6(\Omega_0)}^5
		+\|\varPhi\|_{L^6(\Omega_0)}^{6}\|\nabla u\|_{L^\infty}\bigr).
	\end{split}
\end{equation}
Now, operating  $\nabla$ to \eqref{38}   and multiplying the resulting equality by $|\nabla\varPhi|^{4}\nabla\varPhi$,  one obtains after integration by parts over $\Omega_R$ that 
\begin{equation}
	\begin{split}\label{pop}
		\frac{d}{dt}\|\nabla\varPhi\|_{L^6}^6
		\leq& C\int\bigl(|\nabla\varPhi|^6|\nabla u|
		+|\nabla\varPhi|^5|\rho^\frac{\delta-1}{2}\nabla\mathrm{div}u|\bigr)dx\\
		\leq&C\bigl(\|\nabla u\|_{L^\infty}\|\nabla\varPhi\|_{L^6}^6
		+\|\nabla\varPhi\|_{L^6}^{5}\|\rho^\frac{\delta-1}{2}\nabla\mathrm{div}u\|_{L^6}\bigr).
	\end{split}
\end{equation}
Then, operating  $\nabla^2$ to \eqref{38}  and multiplying the resulting equality by $\nabla^2\varPhi$, integrating it over $\Omega_R$, it holds that
\begin{equation}\label{pop1}
\begin{split}
\frac{d}{dt}\|\nabla^2\varPhi\|_{L^{2}}^2
\leq& C\int\bigl(|\nabla u||\nabla^{2}\varPhi|^2+|\nabla^2u||\nabla\varPhi||\nabla^2\varPhi|+|\nabla^2\varPhi||\nabla(\rho^\frac{\delta-1}{2}\nabla\mathrm{div}u)|\bigr)dx\\
\leq&C\bigl(\|\nabla u\|_{L^\infty}\|\nabla^2\varPhi\|_{L^{2}}^{2}+\|\nabla^2u\|_{L^3}\|\nabla\varPhi\|_{L^{6}}\|\nabla^2\varPhi\|_{L^{2}}\\
&\quad\quad +\|\nabla^2\varPhi\|_{L^{2}}\|\nabla(\rho^\frac{\delta-1}{2}\nabla\mathrm{div}u)\|_{L^{2}}\bigr).
\end{split}
\end{equation}

Finally, we deduce from \eqref{pop}-\eqref{pop1} and \eqref{g3u} that
\begin{equation}\nonumber
\begin{split}
&\frac{d}{dt}\left(\|\varPhi\|_{L^6(\Omega_0)}+\|\nabla\varPhi\|_{L^6\cap D^{1}}\right)\\
&\leq C\|\nabla u\|_{H^2}\left(\|\varPhi\|_{L^6(\Omega_0)}+\|\nabla\varPhi\|_{L^6\cap D^{1}}\right)+C\|\rho^\frac{\delta-1}{2}\nabla\mathrm{div}u\|_{L^6\cap D^{1}}\\
&\leq C\mathcal{E}^\beta+C\|\nabla u_t\|_{L^2}^2+C\|\rho^\frac{\delta-1}{2}\mathcal{L}u\|_{L^2}^2.
\end{split}
\end{equation}
Integrating the above inequality over $(0,t)$, using \eqref{e313} and \eqref{e310}, we thus obtain \eqref{e343} and complete the proof of Lemma \ref{L3.6}.
\end{proof}

Now, Proposition \ref{p1} is a direct consequence of Lemmas \ref{L3.1}-\ref{L3.6}.

\begin{solution}
First,   direct calculations imply that
\begin{equation}
    \begin{split}\label{dsxc}
\sup_{0\leq s\leq t}\|\nabla u\|_{L^2}^2&\leq C\sup_{0\leq s\leq t}\big(\|\curl u\|_{L^2}^2+\|\div u\|_{L^2}^2+\|u\|_{L^2(\Omega_0)}^2\big)\\
&\leq C\exp\bigg(C\int_0^t\mathcal{E}^\beta ds\bigg)+C\sup_{0\leq s\leq t}\big(\|\rho^\frac{\delta-1}{2}\|_{L^\infty(\Omega_0)}^2\|\rho^\frac{1-\delta}{2}u\|_{L^2(\Omega_0)}^2\big)\\
&\leq C\exp\bigg(C\int_0^t\mathcal{E}^\beta ds\bigg),
    \end{split}
\end{equation}
where one has used Lemma \ref{l22}, \eqref{e34}, \eqref{e310}, and \eqref{e343}.

Therefore, it follows from \eqref{e34}, \eqref{e310}, \eqref{e313}, \eqref{e336}, \eqref{e343}, and \eqref{dsxc} that for all $t\in(0,T_1]$,
\begin{equation}
\mathcal{E}^2(t)\leq  CC_0^\beta+C\int_0^t\mathcal{E}^\beta ds+C\exp\bigg(C\int_0^t\mathcal{E}^\beta ds\bigg)\leq C\exp\bigg(C\int_0^t\mathcal{E}^\beta ds\bigg).
\end{equation}
Hence, the standard arguments yield that for $M\triangleq Ce^C$ and $T_0\triangleq \min\{T_1,(CM^\beta)^{-1}\}$,
\begin{equation}\nonumber
\sup_{0\leq t\leq T_0}\mathcal{E}(t)\leq M.
\end{equation}
which together with \eqref{e34}, \eqref{e310}, and \eqref{e313} gives \eqref{e32}. The proof of Proposition \ref{p1} is  completed.
\end{solution}

\begin{remark}
%Similar to the proof of \cite[Lemma3.4]{Lili2025}, w
We can also deduce from Lemma \ref{G-N}, \eqref{114}, \eqref{e336}, and \eqref{117} that  for all $t\in(0,T_0]$,
\begin{equation}\label{3.49}
\begin{split}
\sup\limits_{0\leq s\leq t}(\|u\|_{L^6}+\|\nabla^2 u\|_{L^2})
+\int_0^{t}\Big(\|u_t\|_{L^6}^2+\|\nabla^3 u\|_{L^2}^2+\|\rho^\frac{\delta-1}{2}\nabla\mathrm{div}u\|_{ L^6\cap D^{1}}^2\Big)ds\leq C.
\end{split}
\end{equation}
\end{remark}

\section{Proof of Theorem 1.1}
Letting $(\rho_0, u_0)$ be as in Theorem 1.1, since the vacuum only appears in the far-field, we first construct the  approximate smooth $\rho^R_0\in C^\infty(\Omega)$ satisfying that $\rho^R_0>0$ and 
\begin{equation}
\left\{ \begin{array}{l}
(\rho^R_0)^\frac{\delta-1}{2}\rightarrow \rho_0^\frac{\delta-1}{2} \in L^6(\Omega_0),\\
(\rho^R_0)^{\gamma-\frac{1+\delta}{2}}\rightarrow \rho_0^{\gamma-\frac{1+\delta}{2}}\ in\ D_0^1(\Omega)\cap D^2(\Omega),\\
\nabla(\rho^R_0)^\frac{\delta-1}{2}\rightarrow \nabla\rho_0^\frac{\delta-1}{2}\ in\ D_0^{1}(\Omega),
       \end{array} \right. ~~~~~~~~~\mbox{as}~~~~~~~~~R\rightarrow\infty.
\end{equation}
%as $R\rightarrow\infty$. 
%Actually, we can directly smooth $\rho_0$ since the vacuum only appears in the far-field.
Next, we consider the unique strong solution of the following Lam\'{e} system
\begin{equation}
\left\{ \begin{array}{l}
\mathcal{L}u_0^R=-(\rho^R_0)^{1-\delta}u_0^R+(\rho^R_0)^\frac{1-\delta}{2}(\rho_0^\frac{1-\delta}{2}u_0+g)\ast j_\frac{1}{R},\\
u_0^R\cdot n=0,\ \mathrm{curl}u_0^R\times n=-Au,\ x\in\partial \Omega,\\
u_0^R\cdot n=0,\ \mathrm{curl}u_0^R\times n=0,\ x\in\partial B_R.
       \end{array} \right.
\end{equation}
Following the similar arguments as \cite{13}, it can be proved that
\begin{equation}
\lim_{R\rightarrow\infty}(\|u_0^R-u_0\|_{D^1\cap D^2}+\|(\rho^R_0)^\frac{1-\delta}{2}u_0^R-\rho_0^\frac{1-\delta}{2}u_0\|_{L^2})=0.
\end{equation}
%For details, see \cite{13}. 
Then, Lemma \ref{l25} implies that  the IBVP \eqref{9} with the initial data $(\rho^R_0,u_0^R)$ has a classical solution $(\rho^R,u^R)$ on $\Omega_R\times[0,T_R]$. Moreover, Proposition \ref{p1} shows that  there exists a $T_0$ independent of $R$ such that both \eqref{e32} and \eqref{3.49} hold  for $(\rho^R,u^R)$. Extending $(\rho^R,u^R,\phi^R:=\nabla(\rho^R)^\frac{\delta-1}{2})$ by zero on $\Omega\setminus \Omega_R$ and denoting
\begin{equation}
\tilde{\rho}^R=\rho^R\varphi_R^\nu,
~~\tilde{\phi}^R=\nabla(\rho^R)^\frac{\delta-1}{2}\varphi_R,~~w^R=u^R\varphi_R,
\end{equation}
with $\nu=\frac{2}{2\gamma-1-\delta}$ and $\varphi_R\in C_0^\infty(B_R)$ satisfying 
\begin{equation}\label{phi_R}
    0\leq\varphi_R\leq 1,~~\varphi_R(x)\leq 1 ~\text{for}~\vert x\vert\leq R/2,~~\vert\nabla^k\varphi_R\vert\leq CR^{-k}(k=1,2,3),
\end{equation}
for $R> 2R_0+1$, we thus deduce from Proposition \ref{p1}, \eqref{3.49}, and \eqref{phi_R} that
\begin{equation}\label{4.6}
\begin{split}
&\sup_{0\leq t\leq T_0}(\|(\tilde{\rho}^R)^\frac{1-\delta}{2}w^R\|_{L^2(\Omega)}
		+\|\nabla w^R\|_{L^2(\Omega)}
		+\|(\tilde{\rho}^R)^\frac{1-\delta}{2}w^R_t\|_{L^2(\Omega)})\\
  &\leq C+C\sup_{0\leq t\leq T_0}(\|\nabla u^R\|_{L^2(\Omega_R)}
  +\|u^R\|_{L^6(\Omega_R)}\|\nabla \varphi_R\|_{L^3(\Omega_R)})\\
  &\leq  C,
\end{split}
\end{equation}
and
\begin{equation}
\begin{split}
    &\sup_{0\leq t\leq T_0}(\|w^R\|_{L^6(\Omega)}
		+\|\nabla^2w^R\|_{L^2(\Omega)}
		)\\
  &\leq C+C\sup_{0\leq t\leq T_0}(\|\nabla^2 u^R\|_{L^2(\Omega_R)}
  +\frac{1}{R}\|\nabla u^R\|_{L^2(\Omega_R)}
  +\|u^R\|_{L^6(\Omega_R)}\|\nabla^2 \varphi_R\|_{L^3(\Omega_R)})\\
  &\leq  C.
\end{split}
\end{equation}

Next, the straightforward calculations yield that
\begin{equation}
\begin{split}
    &\sup_{0\leq t\leq T_0}\|(\tilde{\rho}^R)^{\gamma-\frac{1+\delta}{2}}\|_{W^{1,6}\cap D^1\cap D^2(\Omega)}\\
  &\leq C+C\sup_{0\leq t\leq T_0}(\|\nabla (\rho^R)^{\gamma-\frac{1+\delta}{2}}\|_{L^2\cap L^6(\Omega_R)}
  +\|(\rho^R)^{\gamma-\frac{1+\delta}{2}}\|_{W^{1,6}(\Omega_R)}\|\nabla \varphi_R\|_{L^3\cap L^6(\Omega_R)})\\
  &\quad+C\sup_{0\leq t\leq T_0}(\|\nabla^2(\rho^R)^{\gamma-\frac{1+\delta}{2}}\|_{L^2(\Omega_R)}+\|(\rho^R)^{\gamma-\frac{1+\delta}{2}}\|_{L^6(\Omega_R)}\|\nabla \varphi_R\|_{L^3(\Omega_R)})\\
  &\leq  C,
\end{split}
\end{equation}
and that
\begin{equation}
\begin{split}
    &\sup_{0\leq t\leq T_0}\|\tilde{\phi}^R\|_{L^{6}\cap D^{1,2}(\Omega)}\\
  &\leq C+C\sup_{0\leq t\leq T_0}(\|\nabla^2 (\rho^R)^\frac{\delta-1}{2}\|_{L^{2}(\Omega_R)}
  +\|\nabla (\rho^R)^\frac{\delta-1}{2}\|_{L^{6}(\Omega_R)}\|\nabla \varphi_R\|_{L^3(\Omega_R)})\\
  &\leq  C.
\end{split}
\end{equation}

Similarly, it follows from Proposition \ref{p1} and \eqref{3.49}  that
%\begin{equation}
%    \int_0^{T^\ast}\bigl(\|\nabla w^R_t\|_{L^2(\Omega)}\bigr)dt
%  \leq C,
%\end{equation}
\begin{equation}
    \int_0^{T_0}\bigl(\|\nabla w^R_t\|_{L^2(\Omega)}+\| w^R_t\|_{L^6(\Omega)}^2+\|\nabla^3 w^R\|_{L^2(\Omega)}^2\bigr)dt
  \leq C,
\end{equation}
and
\begin{equation}\label{4.12}
\begin{split}
    &\int_0^{T_0}\|\bigl((\tilde{\rho}^R)^{\gamma-\frac{1+\delta}{2}}\bigr)_t\|_{L^6(\Omega)}^2dt\\
    &\leq
     \int_0^{T_0}\|u\cdot\nabla(\rho^R)^{\gamma-\frac{1+\delta}{2}}\|_{L^6(\Omega_R)}^2
    +\|(\rho^R)^{\gamma-\frac{1+\delta}{2}}\mathrm{div}u\|_{L^6(\Omega_R)}^2dt\\
    &\leq\int_0^{T_0}\|(\rho^R)^{\gamma-\frac{1+\delta}{2}}\|_{W^{1,6}(\Omega_R)}^2
    \|\nabla u\|_{H^1(\Omega_R)}^2dt\\
  &\leq C,
\end{split}
\end{equation}
due to  \eqref{310}. 
With all these estimates \eqref{4.6}--\eqref{4.12} at hand, there exists a subsequence $R_j$, $R_j\rightarrow\infty$, such that $(\tilde{\rho}^{R_j},w^{R_j},\tilde{\phi}^{R_j})$ converge to some limit $(\rho,u,\phi)$ in some weak sense as follows:
\begin{align}
	\rho^{R_j}\rightarrow\rho,u^{R_j}\rightarrow u\ in\ C(\overline{\Omega_N}\times[0,T_0]),\forall\ N>2R_0+1,\label{strong}\\
	(\rho^{R_j})^{\gamma-\frac{1+\delta}{2}}\rightharpoonup \rho^{\gamma-\frac{1+\delta}{2}}\ \mathrm{weakly}^\ast\ in\ L^\infty(0,T_0; D_0^1\cap D^2(\Omega)),\\
	\tilde{\phi}^R\rightharpoonup\phi\ \mathrm{weakly}^\ast\ L^\infty(0,T_0;D_0^{1,2}(\Omega)),\\
	u^{R_j}\rightharpoonup u\ \mathrm{weakly}^\ast\ in\ L^\infty(0,T_0;D_0^1(\Omega)\cap D^2(\Omega)),\\
	\nabla^3u^{R_j}\rightharpoonup\nabla^3u\ \mathrm{weakly}\ in\ L^2((0,T_0)\times\Omega),\\
	u^{R_j}_t\rightharpoonup u_t\ \mathrm{weakly}\ in\ L^2(0,T_0;D_0^1(\Omega)).\label{weak}
\end{align}
And it is easy to check that $\phi=\nabla \rho^\frac{\delta-1}{2}$ in the weak sense since $\tilde{\phi}^R=\nabla(\tilde{\rho}^R)^\frac{\delta-1}{2}$ in $\Omega_{R/2}$ for all $R>2R_0+1$. Next, for any test function $\Psi\in C_0^\infty(\Omega\times[0,T_0])$, it follows form \eqref{strong}--\eqref{weak} that
\begin{equation}
    \begin{split}
    \lim_{j\rightarrow\infty}\int_0^{T_0}\int_{\Omega}(\rho^{R_j})^\frac{\delta-1}{2}\mathcal{L}u^{R_j}\varphi_{R_j}\Psi dx dt
        &=\lim_{j\rightarrow\infty}\int_0^{T_0}\int_{supp\Psi}(\rho^{R_j})^\frac{\delta-1}{2}\mathcal{L}u^{R_j}\varphi_{R_j}\Psi dx dt\\
        &=\lim\limits_{\substack{j\rightarrow\infty \\supp\Psi\subset B_{R_j/2}}}\int_0^{T_0}\int_{supp\Psi}(\rho^{R_j})^\frac{\delta-1}{2}\mathcal{L}u^{R_j}\varphi_{R_j}\Psi dxdt\\
        &=\lim\limits_{\substack{j\rightarrow\infty \\supp\Psi\subset B_{R_j/2}}}\int_0^{T_0}\int_{\Omega}(\tilde{\rho}^{R_j})^\frac{\delta-1}{2}\mathcal{L}w^{R_j}\Psi dx dt\\
        &=\int_0^{T_0}\int_{\Omega}\rho^\frac{\delta-1}{2}\mathcal{L}u\Psi dx dt,
    \end{split}
\end{equation}
and thus
\begin{align}
        (\rho^{R_j})^\frac{\delta-1}{2}\mathcal{L}u^{R_j}\varphi_{R_j}\rightharpoonup \rho^\frac{\delta-1}{2}\mathcal{L}u\ \mathrm{weakly}\ in\ L^2((0,T_0)\times\Omega).
\end{align}
Similarly, we can obtain
\begin{align}
	(\rho^{R_j})^\frac{\delta-1}{2}\nabla\mathrm{div}u^{R_j}\varphi_{R_j}\rightharpoonup\rho^\frac{\delta-1}{2}\nabla\mathrm{div}u\ \mathrm{weakly}\ in\ L^2((0,T_0)\times\Omega),\\
	\nabla((\rho^{R_j})^\frac{\delta-1}{2}\nabla\mathrm{div}u^{R_j})\varphi_{R_j}\rightharpoonup\nabla(\rho^\frac{\delta-1}{2}\nabla\mathrm{div}u)\ \mathrm{weakly}\ in\ L^2((0,T_0)\times\Omega).
\end{align}
Moreover, $(\rho,u)$ also satisfies   \eqref{e32} and \eqref{3.49}. Therefore, we can easily show that $(\rho,u)$ is a strong solution to the  problem 
\eqref{1}-\eqref{7} satisfying the regularity \eqref{e116}.

Now, it only remains to prove the uniqueness of the strong solutions satisfying \eqref{e116}.
Let $(\rho_1,u_1)$ and $(\rho_2,u_2)$ be two strong solutions satisfying \eqref{e116} with the same initial data. Set
$$
\begin{gathered}
\varphi_i =\rho^{\gamma-\frac{1+\delta}{2}}_i,\ h_i=\rho^\frac{1-\delta}{2}_i,\ g_i=\log\rho_i,\ i=1,2,\\
\bar{\varphi}=\varphi_1-\varphi_2,\ \bar{h}=h_1-h_2,\ \bar{g}=g_1-g_2,\ \bar{u}=u_1-u_2.
\end{gathered}
$$
Subtracting the momentum equations satisfied by $(\rho_1,u_1)$ and $(\rho_2,u_2)$ yields
\begin{equation}\label{2.96}
\begin{split}
h_1^2\bar{u}_t+h_1^2u_1\cdot\nabla\bar{u}-\mathcal{L}\bar{u}=-h_1\nabla\bar{\varphi}-\bar{h}\nabla\varphi_2-\bar{h}(h_1+h_2)(u_2)_t\\
-\bar{h}(h_1+h_2)u_2\cdot\nabla u_2-h_1^2\bar{u}\cdot\nabla u_2+\nabla\bar{g}\cdot\mathcal{S}(u_2)+\nabla g_1\cdot\mathcal{S}(\bar{u}),
\end{split}
\end{equation}
in which all the coefficients is assumed to be $1$ for simplicity. Meanwhile,  $(\overline{\varphi},\overline{h},\overline{g},\overline{u})$ meets the following initial conditions
\begin{equation*}
(\overline{\varphi},\overline{h},\overline{g},\overline{u})|_{t=0}=(0,0,0,0),
\end{equation*}
and boundary conditions
\begin{equation*}
	\overline{u}\cdot n|_{\partial\Omega}=0,\quad \mbox{curl}\overline{u}\times n|_{\partial\Omega}=-A\overline{u}.
\end{equation*}

Multiplying \eqref{2.96} by $\bar{u}$ and integrating by parts lead to
\begin{equation}
	\begin{split}\label{try1}
		&\frac{1}{2}\frac{d}{dt}\int h_1^2|\bar{u}|^2dx+\mu\|\mathrm{curl} \bar u\|_{L^2}^2+
(2\mu+\lambda)\|\mathrm{div}\bar u\|_{L^2}^2+\mu\int_{\partial\Omega}\bar u\cdot A\cdot \bar udS\\
		&\leq C\Big(\|\mathrm{div}u_1\|_{L^\infty}\|h_1\bar{u}\|_{L^2}^2
		+\|\nabla\bar{\varphi}\|_{L^2}\|h_1\bar{u}\|_{L^2}+\|\bar{h}\|_{L^3}\|\nabla\varphi_2\|_{L^2}\|\bar{u}\|_{L^6}\\
		&\quad
		+\|\bar{h}\|_{L^3}(\|h_1\bar{u}\|_{L^2}\|(u_2)_t\|_{L^6}+\|h_2(u_2)_t\|_{L^2}\|\bar{u}\|_{L^6})\\
		&\quad+\|\bar{h}\|_{L^3}\|h_1+h_2\|_{L^\infty}\|u_2\|_{L^6}\|\nabla u_2\|_{L^3}\|\bar{u}\|_{L^6}+\|\nabla u_2\|_{L^\infty}\|h_1\bar{u}\|_{L^2}^2\\
		&\quad+\|\nabla\bar{g}\|_{L^2}\|\nabla u_2\|_{L^3}\|\bar{u}\|_{L^6}
		+\|h_1\|_{L^\infty}^\frac12\|\nabla(h_1)^{-1}\|_{L^{6}}\|h_1\bar{u}\|_{L^2}^\frac12\|\nabla\bar{u}\|_{L^2}^\frac32\Big)\\
		&\leq \epsilon\|\nabla \bar u\|_{L^2}^2+CF(t)(\|h_1\bar{u}\|_{L^2}^2+\|\nabla\bar{\varphi}\|_{L^2}^2+\|\bar{h}\|_{H^1}^2+\|\nabla\bar{g}\|_{L^2}^2),
	\end{split}
\end{equation}
where $F(t)=1+\|\nabla u_1\|_{H^2}^2+\|\nabla u_2\|_{H^2}^2+\|\nabla (u_1)_t\|_{L^2}^2+\|\nabla (u_2)_t\|_{L^2}^2$.
The trace theorem implies that the boundary term in \eqref{try1} can be governed as
\begin{align}\label{trace6}
\mu\int_{\partial\Omega}\bar u\cdot A\cdot \bar udS\leq C\||u|^2\|_{W^{1,1}(\Omega_0)}  &\leq \epsilon\|\nabla \bar{u}\|_{L^2}^2+C\|h_1\bar{u}\|_{L^2}^2,
\end{align}
where we have used
\begin{equation}\label{rho_inf1}
    \|h_1^{-1}\|_{L^\infty(\Omega_0)}=\|h_1^{-1}\|_{W^{1,6}(\Omega_0)}\leq C.
\end{equation}

Subtracting the mass equations satisfied by $(\rho_1,u_1)$ and $(\rho_2,u_2)$ yields
\begin{equation}\label{2.98}
	\bar{h}_t+\nabla\bar{h}\cdot u_2+\bar{h}\mathrm{div}u_2=-\nabla h_1\cdot \bar{u}-h_1\mathrm{div}\bar{u},
\end{equation}
\begin{equation}\label{2.99}
	(\nabla\bar{g})_t+\nabla^2\bar{g}\cdot u_2+\nabla\bar{g}\cdot \nabla u_2=-\nabla^2g_1\cdot \bar{u}-\nabla g_1\cdot \nabla \bar{u}-\nabla\mathrm{div}\bar{u},
\end{equation}
\begin{equation}\label{2.100}
	\begin{split}
        (\nabla\bar{\varphi})_t+\nabla^2\bar{\varphi}\cdot u_2+\nabla\bar{\varphi}\cdot \nabla u_2+\nabla\bar{\varphi}\cdot \mathrm{div}u_2+\bar{\varphi}\nabla\mathrm{div}u_2\\
		=-\nabla^2\varphi_1\cdot \bar{u}-\nabla \varphi_1\cdot \nabla \bar{u}-\nabla\varphi_1\cdot \mathrm{div}\bar{u}-\varphi_1\nabla\mathrm{div}\bar{u},
	\end{split}
\end{equation}
where we take all the coefficients as 1 for simplicity.  Multiplying \eqref{2.98} by $\bar{h}$ and integrating by parts lead to
\begin{equation}
	\begin{split}
		\frac{d}{dt}\|\bar{h}\|_{L^2}^2\leq&\|\mathrm{div}u_2\|_{L^\infty} \|\bar{h}\|_{L^2}^2+\|h_1\|_{L^\infty}\|\nabla\bar{u}\|_{L^2}\|\bar{h}\|_{L^2}\\
		&+\|h_1\|_{L^\infty}\|\nabla(h_1)^{-1}\|_{L^{6}}\|h_1\bar{u}\|_{L^{3}}\|\bar{h}\|_{L^2}\\
		\leq& CF(t)(\|\bar{h}\|_{L^2}+\|h_1\bar{u}\|_{L^2}+\|\nabla \bar{u}\|_{L^2})\|\bar{h}\|_{L^2}.
	\end{split}
\end{equation}

Taking gradient of \eqref{2.98}, multiplying the resulting equations by $\nabla\bar{h}$ and integrating by parts lead to
\begin{equation}
	\begin{split}
		\frac{d}{dt}\|\nabla\bar{h}\|_{L^2}^2\leq&\|\nabla u_2\|_{L^\infty}\|\nabla\bar{h}\|_{L^2}^2+\|\bar{h}\|_{L^6}\|\nabla^2 u_2\|_{L^3}\|\nabla\bar{h}\|_{L^2}\\
		&+\|h_1\|_{L^\infty}^3\|\nabla(h_1)^{-1}\|_{L^{6}}^2\|\bar{u}\|_{L^{6}}\|\nabla\bar{h}\|_{L^2}\\
		&+\|h_1\|_{L^\infty}^2\|\nabla^2(h_1)^{-1}\|_{L^{2}}\|\bar{u}\|_{L^{\infty}}\|\nabla\bar{h}\|_{L^2}\\
		&+\|h_1\|_{L^\infty}^2\|\nabla(h_1)^{-1}\|_{L^{6}}\|\nabla\bar{u}\|_{L^{3}}\|\nabla\bar{h}\|_{L^2}\\
		&+\|h_1\|_{L^\infty}\|\nabla^2\bar{u}\|_{L^2}\|\nabla\bar{h}\|_{L^2}\\
		\leq& CF(t)(\|\bar{h}\|_{H^1}^2+\|h_1\bar{u}\|_{L^2}^2+\|\nabla \bar{u}\|_{L^2}^2)+\epsilon\|\nabla^2 \bar{u}\|_{L^2}^2.
	\end{split}
\end{equation}

Multiplying \eqref{2.99} by $\nabla\bar{g}$ and integrating by parts lead to
\begin{equation}
\begin{split}
	\frac{d}{dt}\|\nabla\bar{g}\|_{L^2}^2\leq&\|\nabla u_2\|_{L^\infty} \|\nabla\bar{g}\|_{L^2}^2+\|h_1\|_{L^\infty}\|\nabla^2(h_1)^{-1}\|_{L^{2}}\|\bar{u}\|_{L^\infty}\|\nabla\bar{g}\|_{L^2}\\
	&+\|h_1\|_{L^\infty}^2\|\nabla(h_1)^{-1}\|_{L^{6}}^2\|\bar{u}\|_{L^{6}}\|\nabla\bar{g}\|_{L^2}\\
	&+\|h_1\|_{L^\infty}\|\nabla(h_1)^{-1}\|_{L^{6}}\|\nabla\bar{u}\|_{L^{3}}\|\nabla\bar{g}\|_{L^2}+\|\nabla^2\bar{u}\|_{L^2}\|\nabla\bar{g}\|_{L^2}\\
	\leq& CF(t)(\|\nabla\bar{g}\|_{L^2}^2+\|h_1\bar{u}\|_{L^2}^2+\|\nabla \bar{u}\|_{L^2}^2)+\epsilon\|\nabla^2 \bar{u}\|_{L^2}^2.
\end{split}
\end{equation}

Multiplying \eqref{2.100} by $\nabla\bar{\varphi}$ and integrating by parts lead to
\begin{equation}
	\begin{split}
		\frac{d}{dt}\|\nabla\bar{\varphi}\|_{L^2}^2\leq&\|\nabla u_2\|_{L^\infty}\|\nabla\bar{\varphi}\|_{L^2}^2+\|\bar{\varphi}\|_{L^6}\|\nabla^2 u_2\|_{L^3}\|\nabla\bar{\varphi}\|_{L^2}\\
		&+\|\nabla^2\varphi_1\|_{L^2}\|\bar{u}\|_{L^\infty}\|\nabla\bar{\varphi}\|_{L^2}+\|\nabla\varphi_1\|_{L^6}\|\nabla\bar{u}\|_{L^3}\|\nabla\bar{\varphi}\|_{L^2}\\
		&+\|\varphi_1\|_{L^\infty}\|\nabla^2\bar{u}\|_{L^2}\|\nabla\bar{\varphi}\|_{L^2}\\
		\leq& CF(t)(\|\nabla\bar{\varphi}\|_{L^2}^2+\|h_1\bar{u}\|_{L^2}^2+\|\nabla \bar{u}\|_{L^2}^2)+\epsilon\|\nabla^2 \bar{u}\|_{L^2}^2.
	\end{split}
\end{equation}

Applying the standard $L^p$-estimate theory to \eqref{2.96} leads to
\begin{equation}\label{qaza}
\begin{split}
\|\nabla^2\bar{u}\|_{L^2}&\leq C\|\mathcal{L}\bar{u}\|_{L^2}+C\|\nabla\bar u\|_{L^2}\\
&\leq CF(t)(\|\nabla\bar{u}\|_{L^2}+\|\bar{h}\|_{H^1}+\|\nabla\bar{g}\|_{L^2})+C\|h_1\bar{u}_t\|_{L^2}+C\|\nabla\bar{\varphi}\|_{L^2}.
\end{split}
\end{equation}

Observing that  
\begin{align}\label{qaza1}
h_1^2(u_1)_t-h_2^2(u_2)_t=\frac12\Big(h_1^2\bar u_t+(h_1+h_2)\bar h(u_2)_t\Big)+\frac12\Big((h_1+h_2)\bar h(u_1)_t+h_2^2\bar u_t\Big),
\end{align}
one obtains after multiplying \eqref{2.96} by $2\bar{u}_t$ and integrating by parts that
\begin{equation}\label{e526}
\begin{split}
&\frac{d}{dt}\left(\mu\|\mathrm{curl}\bar u\|_{L^2}^2
+(2\mu+\lambda)\|\mathrm{div}\bar u\|_{L^2}^2+\mu\int_{\partial\Omega}\bar u\cdot A\cdot \bar udS\right)
+\int (h_1^2+h_2^2)|\bar{u}_t|^2dx\\
&=2\int\Big(-h_1^2u_1\cdot\nabla\bar u-h_1\nabla\bar{\varphi}-\bar h\nabla\varphi_2-\frac12(h_1+h_2)\bar{h}\big((u_2)_t+(u_1)_t\big)
\\
&\quad-\bar{h}(h_1+h_2)u_1\cdot\nabla u_2-h_1^2\bar{u}\cdot\nabla u_2+\nabla\bar{g}\cdot\mathcal{S}(u_2)+\nabla g_1\cdot\mathcal{S}(\bar{u})\Big)\cdot\bar{u}_tdx\\
&\triangleq\sum_{i=1}^8K_i.
\end{split}
\end{equation}
We estimate each $K_i(i=1,...,8)$ as follows:
\begin{equation}%\label{e527}
\begin{split}\nonumber
&K_1\leq C\|h_1\|_{L^\infty}\|u_1\|_{L^\infty}\|\nabla\bar u\|_{L^2}\|h_1\bar u_t\|_{L^2}
\leq CF(t)\|\nabla \bar u\|_{L^2}^2+\epsilon\|h_1\bar{u}_t\|_{L^2}^2,\\
&K_2\leq C\|\nabla\bar{\varphi}\|_{L^2}\|h_1\bar{u}_t\|_{L^2}\leq C\|\nabla\bar{\varphi}\|_{L^2}^2+\epsilon\|h_1\bar{u}_t\|_{L^2}^2,\\
&K_3\leq C\|\bar h\|_{L^3}\|h_2^{-1}\nabla\varphi_2\|_{L^6}\|h_2\bar u_t\|_{L^2}\leq C\|\bar{h}\|_{H^1}^2
+\epsilon\|h_1\bar{u}_t\|_{L^2}^2,\\
&K_4\leq C\|\bar h\|_{L^3}\big(\|(u_2)_t\|_{L^6}+\|(u_1)_t\|_{L^6}\big)\|(h_1+h_2) \bar u_t\|_{L^2}\\
&\ \quad \leq CF(t)\|\bar{h}\|_{H^1}^2
+\epsilon\|(h_1+h_2)\bar{u}_t\|_{L^2}^2,\\
&K_5\leq C\|\bar h\|_{L^6}\|u_1\|_{L^6}\|\nabla u_2\|_{L^6}\|(h_1+h_2)\bar u_t\|_{L^2}\\
&\ \quad \leq CF(t)\|\nabla \bar{h}\|_{L^2}^2+\epsilon\|(h_1+h_2)\bar{u}_t\|_{L^2}^2,\\
&K_6\leq C\|h_1\|_{L^\infty}\|\bar{u}\|_{L^6}\|\nabla u_2\|_{L^3}\|h_1\bar{u}_t\|_{L^2}\leq C\|\nabla u_2\|_{H^1}^2\|\nabla\bar u\|_{L^2}+\epsilon\|h_1\bar{u}_t\|_{L^2}^2,\\
&K_8\leq C\|\nabla(h_1)^{-1}\|_{L^{6}}\|\nabla\bar{u}\|_{L^{3}}\|h_1\bar{u}_t\|_{L^2}\leq C\|\nabla\bar u\|_{L^2}^2+\epsilon\|\nabla^2\bar u\|_{L^2}^2+\epsilon\|h_1\bar{u}_t\|_{L^2}^2,
\end{split}
\end{equation}
and
\begin{equation}
\begin{split}\nonumber
K_7=&\frac{d}{dt}\int \nabla\bar{g}\cdot\mathcal{S}(u_2)\cdot\bar{u}dx-\int\nabla\bar{g}_t\cdot\mathcal{S}(u_2)\cdot\bar{u}dx-\int\nabla\bar{g}\cdot\mathcal{S}(u_2)_t\cdot\bar{u}dx\\
\leq&\frac{d}{dt}\int \nabla\bar{g}\cdot\mathcal{S}(u_2)\cdot\bar{u}dx+\|\nabla\bar{g}\|_{L^2}\|\nabla u_1\|_{L^6}\|\nabla u_2\|_{L^6}\|\bar{u}\|_{L^6}\\
&+\|\nabla\bar{g}\|_{L^2}\|u_1\|_{L^\infty}\|\nabla^2 u_2\|_{L^3}\|\bar{u}\|_{L^6}+\|\nabla\bar{g}\|_{L^2}\| u_1\|_{L^\infty}\|\nabla u_2\|_{L^6}\|\nabla\bar{u}\|_{L^3}\\
&+\|\nabla\bar g\|_{L^2}\|\nabla u_1\|_{L^6}\|\nabla u_2\|_{L^6}\|\bar u\|_{L^6}
+\|\nabla^2 g_1\|_{L^{2}}\|\bar{u}\|_{L^6}^2\|\nabla u_2\|_{L^{6}}\\
&+\|\nabla g_1\|_{L^{6}}\|\nabla\bar{u}\|_{L^2}\|\nabla u_2\|_{L^{6}}\|\bar{u}\|_{L^6}
+\|\nabla^2\bar{u}\|_{L^2}\|\nabla u_2\|_{L^3}\|\bar{u}\|_{L^6}\\
&+\|\nabla\bar{g}\|_{L^2}\|\nabla(u_2)_t\|_{L^2}\|\bar{u}\|_{L^\infty}\\
\leq&\frac{d}{dt}\int \nabla\bar{g}\cdot\mathcal{S}(u_2)\cdot\bar{u}dx+CF(t)\|\nabla\bar g\|_{L^2}^2+C\|\nabla\bar u\|_{L^2}^2+\epsilon\|\nabla^2 \bar u\|_{L^2}^2.
\end{split}
\end{equation}
Substituting $K_1$--$K_8$ into \eqref{e526}  and choosing $\epsilon$ small enough, one gets
\begin{equation}
	\begin{split}\nonumber
		&\frac{d}{dt}\Big(\mu\|\mathrm{curl}\bar u\|_{L^2}^2
+(2\mu+\lambda)\|\mathrm{div}\bar u\|_{L^2}^2+K(t)\Big)+\int (h_1^2+h_2^2)|\bar{u}_t|^2dx\\
		&\leq CF(t)(\|\nabla\bar{u}\|_{L^2}^2+\|\bar{h}\|_{H^1}^2+\|h_1\bar{u}\|_{L^2}^2+\|\nabla\bar{g}\|_{L^2}^2),
	\end{split}
\end{equation}
where
\begin{equation}\nonumber
	K(t)=\mu\int_{\partial\Omega}\bar u\cdot A\cdot \bar udS-\int\nabla\bar{g}\cdot\mathcal{S}(u_2)\cdot\bar{u}dx
\end{equation}
satisfying the following estimates
\begin{equation}\nonumber
|K(t)|\leq \epsilon\|\nabla\bar{u}\|_{L^2}^2+C\|\nabla\bar{g}\|_{L^2}^2+C\|h_1\bar{u}\|_{L^2}^2.
\end{equation}

Finally, let
\begin{equation}
\begin{split}\nonumber
G(t)\triangleq&\|h_1\bar{u}\|_{L^2}^2+\|\bar{h}\|_{H^1}^2+\|\nabla\bar{g}\|_{L^2}^2
+\|\nabla\bar{\varphi}\|_{L^2}^2+\nu\big(\mu\|\mathrm{curl}\bar u\|_{L^2}^2
+(2\mu+\lambda)\|\mathrm{div}\bar u\|_{L^2}^2+K(t)\big),
\end{split}
\end{equation}
we deduce after choosing $\nu$ and $\epsilon$ small enough that
\begin{equation}\nonumber
\frac{d}{dt}G(t)+\nu\int (h_1^2+h_2^2)|\bar{u}_t|^2dx\leq CF(t)G(t),
\end{equation}
which together with Gr\"onwall's inequality yields $G(t)\equiv 0$.
The proof of Theorem 1.1 is completed.

\section{Proof of Theorem 1.2}

Let $(\rho, u)$ be a strong solution to the IBVP \eqref{1}-\eqref{7} described in Theorem \ref{T1} with the following additional initial value condition \begin{equation}\label{2601291}
 \rho_0^{\gamma-\frac{1+\delta}{2}}\in L^{6\alpha},~~~~ 
\max\{\frac{1-\delta}{2\gamma-1-\delta},\frac13\}<\alpha<1,
\end{equation}
suppose that \eqref{blowup1} is false, that is, 
\begin{equation}\label{4.1}
\lim_{T\rightarrow T^\ast}\left(\|\mathcal{D}(u)\|_{L^1(0,T;L^\infty)}+\|\rho^\frac{\delta-1}{2}\|_{L^\infty(0,T;L^6(\Omega_0))}+\|\nabla\rho^\frac{\delta-1}{2}\|_{L^\infty(0,T; D_0^{1})}\right)\leq M_0<\infty.
\end{equation}
This together with $\eqref{9}_1$ immediately yields the following $L^\infty$ bound for the density $\rho$.
\begin{lemma}\label{l51}
Assume that
\begin{equation}\label{4.2}
\int_0^T \|\mathrm{div}u\|_{L^\infty}dt\leq C,\quad 0<T<T^\ast.
\end{equation}
Then
\begin{equation}\label{qa76}
\sup_{0\leq t\leq T}\left(\|\rho^{\gamma-\frac{1+\delta}{2}}\|_{L^{6\alpha}}+\|\rho\|_{L^\infty}\right)\leq C,\quad 0<T<T^\ast.
\end{equation}
Here (and in this section) $C$ will denote a generic constant depending only on $a$, $\delta$, $\gamma$, $\mu$, $\lambda$, $A$, $R_0$, $M_0$, $T$, and the initial data.
\end{lemma}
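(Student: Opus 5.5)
The plan is to argue by transport along particle trajectories, using the mass equation $\eqref{9}_1$ in its renormalized form, and to treat the two norms in \eqref{qa76} separately.

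\emph{The $L^\infty$ bound.} I will introduce the flow map $X(t;x)$ defined by $\frac{d}{dt}X=u(X,t)$, $X(0;x)=x$. It is well defined and preserves $\Omega$, because $u\cdot n=0$ on $\partial\Omega$ and $u$ is Lipschitz for a.e.\ $t$ by \eqref{e116}. Along the characteristics, $\eqref{9}_1$ becomes $\frac{d}{dt}\rho(X(t;x),t)=-\rho\,\mathrm{div}u(X(t;x),t)$. Hence
\begin{equation*}
\rho(X(t;x),t)=\rho_0(x)\exp\Big(-\int_0^t\mathrm{div}u(X(s;x),s)\,ds\Big)\leq \|\rho_0\|_{L^\infty}\exp\Big(\int_0^T\|\mathrm{div}u\|_{L^\infty}ds\Big).
\end{equation*}
The right-hand side is bounded by \eqref{4.2}. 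The initial bound $\|\rho_0\|_{L^\infty}\le C$ follows from $\rho_0^{\gamma-\frac{1+\delta}{2}}\in D_0^1\cap D^2$ and Sobolev embedding, as in \eqref{rho_sup}. Since $X(t;\cdot)$ is onto $\Omega$, this gives $\sup_t\|\rho\|_{L^\infty}\le C$.

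\emph{The $L^{6\alpha}$ bound.} With $\varphi=\rho^{\gamma-\frac{1+\delta}{2}}$, I will use \eqref{310}, namely $\varphi_t+u\cdot\nabla\varphi+(\gamma-\frac{1+\delta}{2})\varphi\,\mathrm{div}u=0$. Multiplying by $6\alpha\,\varphi^{6\alpha-1}$ (note $\varphi\ge0$) and integrating over $\Omega$ gives
\begin{equation*}
\frac{d}{dt}\int\varphi^{6\alpha}dx=\Big(1-6\alpha\big(\gamma-\tfrac{1+\delta}{2}\big)\Big)\int\varphi^{6\alpha}\mathrm{div}u\,dx-\int_{\partial\Omega}\varphi^{6\alpha}u\cdot n\,dS.
\end{equation*}
The boundary term vanishes because $u\cdot n=0$ on $\partial\Omega$. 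The first term is bounded by $C\|\mathrm{div}u\|_{L^\infty}\|\varphi\|_{L^{6\alpha}}^{6\alpha}$. Gr\"onwall's inequality together with \eqref{4.2} and \eqref{2601291} then yields the bound on $\sup_t\|\varphi\|_{L^{6\alpha}}$.

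The one delicate point is justifying the integrations and the vanishing of the far-field contribution in the exterior domain. For this I will perform the computation first on the approximate solutions $(\rho^R,u^R)$ on $\Omega_R$ from Section 4, where both boundary conditions give $u\cdot n=0$ on $\partial\Omega_R$. I will then pass to the limit using lower semicontinuity of the norms. Alternatively, I can multiply by a cutoff $\varphi_R$ and let $R\to\infty$, using $\varphi\in L^{6\alpha}$, $u\in L^6$, and $|\nabla\varphi_R|\le CR^{-1}$.
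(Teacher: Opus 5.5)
Your proof is essentially the paper's. The $L^\infty$ bound comes from integrating $\frac{d}{dt}\rho(X,t)=-\rho\,\mathrm{div}u$ along characteristics, and the $L^{6\alpha}$ bound from the transport/Gr\"onwall estimate for $\varphi=\rho^{\gamma-\frac{1+\delta}{2}}$, which the paper only calls ``standard''. You correctly use the coefficient $\gamma-\frac{1+\delta}{2}$, whereas the paper's displayed equation has the typo $\gamma-\frac{3+\delta}{2}$.

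One caution concerns your justification paragraph. The cutoff route presupposes $\varphi(t)\in L^{6\alpha}$, but the solution class only gives $\varphi\in L^6\cap L^\infty$ and $6\alpha<6$, so this is circular. The approximate solutions $u^R$ exist only on $[0,T_0]$ and are not known to satisfy \eqref{4.2}, so that route does not work as stated either. It is cleaner to reuse your flow map. Since $\det\nabla_yX(t;y)=\exp\bigl(\int_0^t\mathrm{div}u(X(s;y),s)\,ds\bigr)$, one gets $\int\varphi^{6\alpha}(x,t)\,dx=\int\varphi_0^{6\alpha}(y)\exp\bigl((1-6\alpha(\gamma-\frac{1+\delta}{2}))\int_0^t\mathrm{div}u(X(s;y),s)\,ds\bigr)dy$. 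This yields the bound with no integration by parts at infinity.
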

\begin{proof}
First, it is easy to see that the continuity equation $\eqref{9}_1$ on the characteristic curve $X(s; x, t)$, which is defined by
\begin{equation}\nonumber
\left\{ \begin{array}{l}
\frac{d}{ds}X(s; x, t)= u(X(s; x, t),s),\\
X(s; x, t)=x, \end{array} \right.
\end{equation}
can be written as
\begin{equation}\nonumber
\frac{d}{ds}\rho(X(s;x,t),t)=-(\rho \mathrm{div}u)(X(s;x,t),t).
\end{equation}
Therefore, Gr\"onwall's inequality and \eqref{4.2} yield for all $x\in\Omega$
	\begin{equation}
		\begin{split}\nonumber
			\rho(x,t)\leq\sup_{x\in\Omega}\rho_0(x)\exp\bigg(\int_0^T \|\mathrm{div}u\|_{L^\infty}dt\bigg)\leq C.
		\end{split}
	\end{equation}

Next, it follows from $\eqref{9}_1$ that $\rho^{\gamma-\frac{1+\delta}{2}}$ satisfies
\begin{equation}
\rho^{\gamma-\frac{1+\delta}{2}}_t+u\cdot\nabla\rho^{\gamma-\frac{1+\delta}{2}}+({\gamma-\frac{3+\delta}{2}})\rho^{\gamma-\frac{1+\delta}{2}}\mathrm{div}u=0.
\end{equation}
Thus, the standard arguments along with \eqref{2601291} show that
$$
\sup_{t\in[0,T]}\|\rho^{\gamma-\frac{1+\delta}{2}}\|_{L^{6\alpha}}\leq C.
$$
The proof of Lemma \ref{l51} is completed.
\end{proof}

We begin with the following standard energy estimate for $(\rho,u)$.
\begin{lemma}\label{l52}
	Under the condition \eqref{4.1}, it holds that for any $T < T^\ast$,
	\begin{equation}
		\sup_{0\leq t\leq T}\|\rho^\frac{1-\delta}{2}u\|_{L^2}^2
		+\int_0^T\|\nabla u\|_{L^2}^2dt\leq C. \label{4.7}
	\end{equation}
\end{lemma}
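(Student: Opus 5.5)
We will carry out the energy estimate of Lemma \ref{L3.1} again, but now with constants that do not depend on $\mathcal{E}$. The only inputs are \eqref{4.1}, Lemma \ref{l51} and the lower bound of $\rho$ near the boundary.

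\textbf{Step 1: energy identity.} Multiply $\eqref{9}_2$ by $u$ and \eqref{33} by $\frac{|u|^2}{2}$, add, and integrate over $\Omega$. This gives
\begin{align*}
&\frac12\frac{d}{dt}\|\rho^\frac{1-\delta}{2}u\|_{L^2}^2+\mu\|\curl u\|_{L^2}^2+(2\mu+\lambda)\|\div u\|_{L^2}^2+\mu\int_{\partial\Omega}u\cdot A\cdot u\,dS\\
&=-\frac{a\gamma}{\gamma-\delta}\int\nabla\rho^{\gamma-\delta}\cdot u\,dx+\delta\int\nabla\log\rho\cdot\mathcal{S}(u)\cdot u\,dx+\frac{1-\delta}{2}\int\rho^{1-\delta}|u|^2\div u\,dx .
\end{align*}
The last term is bounded by $C\|\div u\|_{L^\infty}\|\rho^\frac{1-\delta}{2}u\|_{L^2}^2$. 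This has an integrable coefficient by \eqref{4.1}, so it is a Gr\"onwall term.

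\textbf{Step 2: the singular term.} Write $\nabla\log\rho=-\frac{2}{1-\delta}\rho^{\frac{1-\delta}{2}}\nabla\rho^\frac{\delta-1}{2}$. Then, using \eqref{qa76},
\[
\Big|\int\nabla\log\rho\cdot\mathcal{S}(u)\cdot u\,dx\Big|\le C\|\nabla\rho^\frac{\delta-1}{2}\|_{L^6}\|\rho^\frac{1-\delta}{2}u\|_{L^{3}}\|\nabla u\|_{L^2}.
\]
We interpolate $\|\rho^\frac{1-\delta}{2}u\|_{L^3}\le\|\rho^\frac{1-\delta}{2}u\|_{L^2}^{1/2}\|\rho^\frac{1-\delta}{2}u\|_{L^6}^{1/2}\le C\|\rho^\frac{1-\delta}{2}u\|_{L^2}^{1/2}\|u\|_{L^6}^{1/2}$. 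Here $\|\nabla\rho^\frac{\delta-1}2\|_{L^6}$ is bounded by the assumption \eqref{4.1}, read with the $L^6$ norm as in \eqref{blowup1}. Young's inequality then gives $\epsilon\|\nabla u\|_{L^2}^2+C\|\rho^\frac{1-\delta}{2}u\|_{L^2}^2$, provided $\|u\|_{L^6}\le C\|\nabla u\|_{L^2}$.

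\textbf{Step 3: coercivity and the boundary term.} On $\Omega_0$ we need a lower bound for $\rho$. By \eqref{4.1}, $\rho^\frac{\delta-1}{2}$ is bounded in $W^{1,6}(\Omega_0)$, hence $\|\rho^{\frac{\delta-1}2}\|_{L^\infty(\Omega_0)}\le C$ as in \eqref{rho_inf}. Therefore
\[
\|u\|_{L^2(\Omega_0)}\le C\|\rho^\frac{1-\delta}{2}u\|_{L^2}.
\]
Combining this with Lemma \ref{l22} (its exterior-domain analogue, obtained by the same cutoff argument) gives
\[
\|\nabla u\|_{L^2}^2\le C(\|\curl u\|_{L^2}^2+\|\div u\|_{L^2}^2)+C\|\rho^\frac{1-\delta}{2}u\|_{L^2}^2 .
\]
The boundary term is handled as in \eqref{trace3}: $\big|\int_{\partial\Omega}u\cdot A\cdot u\,dS\big|\le C\||u|^2\|_{W^{1,1}(\Omega_0)}\le\epsilon\|\nabla u\|_{L^2}^2+C\|\rho^\frac{1-\delta}{2}u\|_{L^2}^2$. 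The $L^6$ bound from Step 2 follows from Lemma \ref{G-N} on a neighborhood plus the same localization.

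\textbf{Step 4: the pressure term.} This is the main obstacle, because no bound on $\|\rho^{\gamma-\frac{1+\delta}{2}}\|_{L^2}$ is available. I would first integrate by parts:
\[
-\frac{a\gamma}{\gamma-\delta}\int\nabla\rho^{\gamma-\delta}\cdot u\,dx=\frac{a\gamma}{\gamma-\delta}\int\rho^{\gamma-\delta}\div u\,dx ,
\]
where the boundary term vanishes since $u\cdot n=0$. I would then bound
\[
\Big|\int\rho^{\gamma-\delta}\div u\,dx\Big|\le \|\rho^{\gamma-\delta}\|_{L^2}\|\div u\|_{L^2}.
\]
Write $\rho^{\gamma-\delta}=\rho^{\frac{1-\delta}{2}}\rho^{\gamma-\frac{1+\delta}{2}}$ and use Lemma \ref{l51} ($\rho^{\gamma-\frac{1+\delta}2}\in L^{6\alpha}$, $\rho\in L^\infty$). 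The condition $\alpha>\frac{1-\delta}{2\gamma-1-\delta}$ means $6\alpha(\gamma-\frac{1+\delta}{2})>3(1-\delta)$, so $\rho^{\gamma-\delta}=\rho^{\theta}$ with $\theta=\gamma-\delta$. I expect to get $\rho^{\gamma-\delta}\in L^2$ either by interpolating between $L^\infty$ and $L^{6\alpha}$ (valid if $6\alpha(\gamma-\frac{1+\delta}2)\le2(\gamma-\delta)$), or otherwise by instead pairing $\rho^{\gamma-\delta}$ in $L^{6/5}$ against $\|\div u\|_{L^6}$. If neither works, I would move part of the weight onto $u$, writing $\int\rho^{\gamma-\frac{1+\delta}2}\rho^\frac{1-\delta}2|u|$ via the gradient form with Hölder exponents dictated by $\alpha>\frac13$. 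Either way the term becomes $\epsilon\|\nabla u\|_{L^2}^2+C$.

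\textbf{Step 5: conclusion.} Choose $\epsilon$ small, absorb the $\epsilon\|\nabla u\|_{L^2}^2$ terms using Step 3, and apply Gr\"onwall's inequality with the integrable coefficient $1+\|\div u\|_{L^\infty}$. This yields \eqref{4.7}.
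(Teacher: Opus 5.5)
Your Steps 1--3 and 5 follow the paper's proof of \eqref{4.7}: the energy identity as in \eqref{4.8}, the singular term via $\nabla\log\rho=-\frac{2}{1-\delta}\rho^{\frac{1-\delta}{2}}\nabla\rho^{\frac{\delta-1}{2}}$, the lower bound \eqref{rho_inf33} near $\partial\Omega$ for both Lemma \ref{l22} and the trace term, and Gr\"onwall with $1+\|\mathrm{div}\,u\|_{L^\infty}$. Your $\epsilon$-version of the trace bound is in fact the form needed for absorption. The gap is Step 4. It is the only place where the extra hypothesis $\rho_0^{\gamma-\frac{1+\delta}{2}}\in L^{6\alpha}$ enters, and you list three options but verify none. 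Lemma \ref{l51} gives only $\rho\in L^\infty$ and $\int\rho^{3\alpha(2\gamma-1-\delta)}dx\le C$, i.e.\ $\rho\in L^p$ for $p\ge 3\alpha(2\gamma-1-\delta)$ and for no lower $p$. So your option (a), $\rho^{\gamma-\delta}\in L^2$, holds iff $\alpha\le\frac{2(\gamma-\delta)}{3(2\gamma-1-\delta)}=\frac13+\frac{1-\delta}{3(2\gamma-1-\delta)}$. This upper bound is not implied by $\max\{\frac{1-\delta}{2\gamma-1-\delta},\frac13\}<\alpha<1$. It fails for $\alpha$ near $1$, and for every admissible $\alpha$ once $\frac{1-\delta}{2\gamma-1-\delta}\ge\frac12$ (e.g.\ $\delta=\frac12$, $\gamma=\frac65$). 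The lower bound $\alpha>\frac{1-\delta}{2\gamma-1-\delta}$ that you quote points the wrong way: for bounded $\rho$, decay improves as $\alpha$ decreases. Option (b) needs $\rho\in L^{\frac65(\gamma-\delta)}$, i.e.\ even more decay, so it is strictly worse than (a). Option (c) is not specified, so the ``either way'' conclusion is unsupported and the lemma is not proved as proposed.

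The paper takes route (c). It writes the pressure term as $\frac{2a\gamma}{2\gamma-1-\delta}\int\nabla\rho^{\gamma-\frac{1+\delta}{2}}\cdot\rho^{\frac{1-\delta}{2}}u\,dx$ and moves the derivative onto $\rho^{\frac{1-\delta}{2}}u$. It then pairs $\|\rho^{\gamma-\frac{1+\delta}{2}}\|_{L^{6\alpha}}$ with $\|\nabla(\rho^{\frac{1-\delta}{2}}u)\|_{L^{6\alpha/(6\alpha-1)}}$ and invokes \eqref{ygt1}. Do not expect to import this as is, for two reasons. First, $\mathrm{div}(\rho^{\frac{1-\delta}{2}}u)$ contains $\rho^{\frac{1-\delta}{2}}\mathrm{div}\,u$, so your term $\int\rho^{\gamma-\delta}\mathrm{div}\,u\,dx$ reappears. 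Second, the first inequality in \eqref{ygt1} is not valid as written: it bounds an $L^q$ norm of a gradient, $q<2$, on an unbounded domain by a fractional power of an $H^1$ norm, with no decay of $\rho$ used. Carried out honestly by H\"older, with $\rho^{\frac{1-\delta}{2}}\in L^{\frac{6\alpha}{3\alpha-1}}$, it returns exactly the restriction of option (a). What does close is to avoid the integration by parts and use the $L^6$ bound on $\nabla\rho^{\frac{\delta-1}{2}}$ from \eqref{4.1}. Since $\nabla\rho^{\gamma-\delta}=-\frac{2(\gamma-\delta)}{1-\delta}\rho^{\gamma-\delta}\rho^{\frac{1-\delta}{2}}\nabla\rho^{\frac{\delta-1}{2}}$, the pressure term is at most $C\|\rho^{\gamma-\delta}\|_{L^3}\|\nabla\rho^{\frac{\delta-1}{2}}\|_{L^6}\|\rho^{\frac{1-\delta}{2}}u\|_{L^2}$. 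This is a Gr\"onwall term whenever $\alpha\le\frac{\gamma-\delta}{2\gamma-1-\delta}$, which is a nonempty but proper part of the stated range. For $\alpha$ closer to $1$ you would need an additional idea, or a stronger decay assumption on $\rho_0$.
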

\begin{proof}
Similar to \eqref{3.4}, the combination of \eqref{4.1} with  \eqref{qa76} gives
\begin{equation}
\begin{split} \label{4.8}
&\frac{1}{2}\frac{d}{dt}\|\rho^\frac{1-\delta}{2}u\|_{L^2}^2+\mu\|\curl u\|_{L^2}^2+
(2\mu+\lambda)\|\mathrm{div}u\|_{L^2}^2+\mu\int_{\partial\Omega}u\cdot A\cdot udS \\
&\leq \int\left(C|\rho|^{1-\delta}|\mathrm{div}u||u|^2-\frac{2a\delta}{2\gamma-1-\delta}\nabla\rho^{\gamma-\frac{1+\delta}{2}}(\rho^\frac{1-\delta}{2}u)
+C|\nabla\rho^\frac{\delta-1}{2}||\rho^\frac{1-\delta}{2}u||\nabla u|\right)dx\\
&\leq C\Big(\|\mathrm{div} u\|_{L^\infty}\|\rho^\frac{1-\delta}{2}u\|_{L^2}^2
+\|\rho^{\gamma-\frac{1+\delta}{2}}\|_{L^{6\alpha}}\|\nabla(\rho^\frac{1-\delta}{2}u)\|_{L^\frac{6\alpha}{6\alpha-1}}\\
&\quad\quad+\|\rho^\frac{1-\delta}{4}\|_{L^\infty}\|\nabla\rho^\frac{\delta-1}{2}\|_{L^{6}}\|\nabla u\|_{L^2}\|u\|_{L^{6}}^{1/2}\|\rho^\frac{1-\delta}{2}u\|_{L^{2}}^{1/2}\Big)\\
&\leq \epsilon\|\nabla u\|_{L^2}^2+C(\|\mathcal{D}(u)\|_{L^\infty}+1)\|\rho^\frac{1-\delta}{2}u\|_{L^2}^2+C,
\end{split}
\end{equation}
where we have used
\begin{align}\label{ygt1}
%&\int\nabla\rho^{\gamma-\frac{1+\delta}{2}}(\rho^\frac{1-\delta}{2}u)dx\\
&\|\nabla(\rho^\frac{1-\delta}{2}u)\|_{L^\frac{6\alpha}{6\alpha-1}}\nonumber\\
&\leq C\|\rho^\frac{1-\delta}{2}u\|_{L^2}^{1-\theta}\|\rho^\frac{1-\delta}{2}u\|_{H^1}^{\theta}\nonumber\\
&\leq C\|\rho^\frac{1-\delta}{2}u\|_{L^2}^{1-\theta}\left(\|\rho^\frac{1-\delta}{2}u\|_{L^2}^{\theta}+\|\rho^\frac{3(1-\delta)}{4}\|_{L^\infty}^{\theta}
\|\nabla\rho^\frac{\delta-1}{2}\|_{L^6}^{\theta}\|\rho^\frac{1-\delta}{2}u\|_{L^2}^{\theta/2}\|u\|_{L^6}^{\theta/2}+
\|\rho^\frac{1-\delta}{2}\|_{L^\infty}^{\theta}\|\nabla u\|_{L^2}^{\theta}\right)\nonumber\\
&\leq \epsilon\|\nabla u\|_{L^2}^2+C\|\rho^\frac{1-\delta}{2}u\|_{L^2}^2+C
\end{align}
 due to $0<\theta<1$  degenerated from  $\max\{\frac{1-\delta}{2\gamma-1-\delta},\frac13\}<\alpha<1$. Using the trace theorem, the boundary term in \eqref{4.8} can be governed as
\begin{align}\label{trace14}
\int_{\partial\Omega}u\cdot A\cdot udS\leq C\||u|^2\|_{H^1(\Omega_0)}  \leq C\|\nabla u\|_{L^2}^2+C\|\rho^\frac{1-\delta}{2}u\|_{L^2}^2,
\end{align}
where we have used
\begin{equation}\label{rho_inf33}
    \|\rho^\frac{\delta-1}{2}\|_{L^\infty(\Omega_0)}=\|\rho^\frac{\delta-1}{2}\|_{W^{1,6}(\Omega_0)}\leq C.
\end{equation}

It follows from Lemma \ref{l22}, \eqref{4.8}, \eqref{trace14}, and Gr\"onwall's inequality that
	\begin{equation}
		\begin{split}
\sup_{t\in[0,T]}\|\rho^\frac{1-\delta}{2}u\|_{L^2}^2+\int_0^T\|\nabla u\|_{L^2}^2dt\leq C.
		\end{split}
	\end{equation}
 The proof of Lemma \ref{l52} is completed.
\end{proof}

The key estimates on $\nabla\rho$ and $\nabla u$ will be given in the following lemma.
\begin{lemma}\label{l522}
	Under the condition \eqref{4.1}, it holds that for any $T < T^\ast$,
	\begin{equation}
		\sup_{0\leq t\leq T}(\|\nabla\rho^{\gamma-\frac{1+\delta}{2}}\|_{L^2}^2+\|\nabla u\|_{L^2}^2)
		+\int_0^T(\|\rho^\frac{\delta-1}{2}\mathcal{L}u\|_{L^2}^2+\|\nabla^2 u\|_{L^2}^2)dt\leq C. \label{4.77}
	\end{equation}
\end{lemma}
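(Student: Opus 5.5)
We will mimic the proof of Lemma \ref{L3.2}, now using the blow-up hypothesis \eqref{4.1} together with Lemmas \ref{l51}--\ref{l52} in place of the bootstrap quantity $\mathcal{E}$. The argument is a Gr\"onwall estimate for $\Psi(t)\triangleq 1+\mu\|\curl u\|_{L^2}^2+(2\mu+\lambda)\|\div u\|_{L^2}^2+\|\nabla\rho^{\gamma-\frac{1+\delta}{2}}\|_{L^2}^2$, closed with the integrable weight $1+\|\mathcal{D}(u)\|_{L^\infty}$. Throughout, \eqref{4.1} and \eqref{qa76} give uniform bounds on $\|\rho\|_{L^\infty}$, $\|\nabla\rho^{\frac{\delta-1}{2}}\|_{L^6\cap D^1}$ and $\|\rho^{\frac{\delta-1}{2}}\|_{L^\infty(\Omega_0)}$, the last one as in \eqref{rho_inf33}.

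\textbf{Velocity part.} Multiply $\eqref{9}_2$ by $-2\rho^{\delta-1}\mathcal{L}u$ and integrate by parts, exactly as in \eqref{trz}. This produces $\frac{d}{dt}\big(\mu\|\curl u\|_{L^2}^2+(2\mu+\lambda)\|\div u\|_{L^2}^2\big)+2\|\rho^{\frac{\delta-1}{2}}\mathcal{L}u\|_{L^2}^2$ plus the boundary term $2\mu\int_{\partial\Omega}u\cdot A\cdot u_t\,dS$. I will write this boundary term as $\mu\frac{d}{dt}\int_{\partial\Omega}u\cdot A\cdot u\,dS$ and move it into the energy. By the trace argument \eqref{trace14} and Lemma \ref{l22}, it is bounded by $\epsilon\|\nabla u\|_{L^2}^2+C\|\rho^{\frac{1-\delta}{2}}u\|_{L^2}^2\le\epsilon\|\nabla u\|_{L^2}^2+C$, so the modified energy is still equivalent to $\|\nabla u\|_{L^2}^2$ up to constants.

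The right-hand side has three terms, each paired with $\rho^{\frac{\delta-1}{2}}\mathcal{L}u$:
\begin{itemize}
\item the convective term $\rho^{\frac{1-\delta}{2}}u\cdot\nabla u$;
\item the pressure term $\nabla\rho^{\gamma-\frac{1+\delta}{2}}$, which is bounded by $\|\nabla\rho^{\gamma-\frac{1+\delta}{2}}\|_{L^2}$;
\item the term $\nabla\rho^{\frac{\delta-1}{2}}\cdot\mathcal{S}(u)$, which is bounded by $C\|\nabla u\|_{L^3}\le C\|\nabla u\|_{L^2}^{1/2}\|\nabla^2u\|_{L^2}^{1/2}$.
\end{itemize}
The elliptic estimate of Lemma \ref{elliptic_estimate} gives $\|\nabla^2u\|_{L^2}\le C\|\mathcal{L}u\|_{L^2}+C\|\nabla u\|_{L^2}\le C\|\rho^{\frac{\delta-1}{2}}\mathcal{L}u\|_{L^2}+C\|\nabla u\|_{L^2}$, since $\rho$ is bounded above. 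So each of these terms is absorbed, leaving $C\Psi$.

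\textbf{Main obstacle: the convective term.} A generic Sobolev bound $\|u\|_{L^\infty}\|\nabla u\|_{L^2}$ only gives a superlinear power of $\Psi$, which would ruin Gr\"onwall. The structure must be used instead. I would first try the identity $u\cdot\nabla u=\nabla(|u|^2/2)-u\times\curl u$. Alternatively, integrate by parts in $\int\rho^{1-\delta}... $-type expressions so that the derivative lands on a factor controlled through $\|\mathcal{D}(u)\|_{L^\infty}$ or on the weight $\nabla\rho^{\frac{\delta-1}{2}}$, whose $L^6$ norm is bounded. If these fail, I would use $\|\rho^{\frac{1-\delta}{2}}u\|_{L^6}\le C\|\nabla u\|_{L^2}+C$, which holds because $\nabla\rho^{\frac{1-\delta}{2}}=-\rho^{1-\delta}\nabla\rho^{\frac{\delta-1}{2}}$ is in $L^6$, and pair it with $\|\nabla u\|_{L^3}$. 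That gives $C\|\nabla u\|_{L^2}^{3/2}\|\nabla^2u\|_{L^2}^{1/2}$, hence a contribution $C\|\nabla u\|_{L^2}^{2}\cdot\|\nabla u\|_{L^2}^{2/3}$ after Young. I would then try to close with the time-integrability $\int_0^T\|\nabla u\|_{L^2}^2\,dt\le C$ from Lemma \ref{l52}, written as $\|\nabla u\|_{L^2}^{2/3}\le 1+\|\nabla u\|_{L^2}^2$, integrable in time. The Gr\"onwall weight is then $1+\|\nabla u\|_{L^2}^2+\|\mathcal{D}(u)\|_{L^\infty}$, which is integrable on $(0,T)$.

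\textbf{Density part.} Differentiate \eqref{310} for $\varphi=\rho^{\gamma-\frac{1+\delta}{2}}$ and test with $\nabla\varphi$. The transport term $u\cdot\nabla\varphi$ produces $\int\nabla\varphi\cdot\nabla u\cdot\nabla\varphi$; after integrating by parts the term $\frac12\int u\cdot\nabla|\nabla\varphi|^2$, only symmetric parts appear and the result is controlled by $\|\mathcal{D}(u)\|_{L^\infty}\|\nabla\varphi\|_{L^2}^2$ (boundary terms vanish since $u\cdot n=0$). The source term gives $\|\varphi\|_{L^\infty}\|\nabla\div u\|_{L^2}\|\nabla\varphi\|_{L^2}\le\epsilon\|\rho^{\frac{\delta-1}{2}}\mathcal{L}u\|_{L^2}^2+C\|\nabla u\|_{L^2}^2+C\|\nabla\varphi\|_{L^2}^2$. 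Here $\|\nabla\div u\|_{L^2}$ is again bounded through the elliptic estimate, and $\|\varphi\|_{L^\infty}\le C$ by \eqref{qa76}.

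\textbf{Conclusion.} Adding the two estimates gives
\begin{equation*}
\Psi'(t)+\|\rho^{\frac{\delta-1}{2}}\mathcal{L}u\|_{L^2}^2\le C\big(1+\|\mathcal{D}(u)\|_{L^\infty}+\|\nabla u\|_{L^2}^2\big)\Psi(t).
\end{equation*}
Gr\"onwall's inequality, with \eqref{4.1} and \eqref{4.7}, gives the bound on $\sup_{0\le t\le T}\Psi(t)$ and on $\int_0^T\|\rho^{\frac{\delta-1}{2}}\mathcal{L}u\|_{L^2}^2\,dt$. Lemma \ref{l22}, combined with the $L^2(\Omega_0)$ control of $u$ from \eqref{rho_inf33} and \eqref{4.7}, converts $\|\curl u\|_{L^2}$ and $\|\div u\|_{L^2}$ into $\|\nabla u\|_{L^2}$. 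Lemma \ref{elliptic_estimate} then gives $\int_0^T\|\nabla^2u\|_{L^2}^2\,dt\le C$, which is \eqref{4.77}.
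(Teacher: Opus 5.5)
\textbf{Verdict: there is a genuine gap.} It sits exactly at the step you flag as the main obstacle, the convective term.

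\textbf{What matches the paper.} Your handling of the pressure term and of the $\nabla\rho^{\frac{\delta-1}{2}}\cdot\mathcal{S}(u)$ term is the paper's. So are the absorption of $2\mu\int_{\partial\Omega}u\cdot A\cdot u_t\,dS$ into $\mu\frac{d}{dt}\int_{\partial\Omega}u\cdot A\cdot u\,dS$, the density estimate, and the final Gr\"onwall step with weight $1+\|\mathcal{D}(u)\|_{L^\infty}+\|\nabla u\|_{L^2}^2$.

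\textbf{Why your fallback fails.} Write $X\triangleq\|\rho^{\frac{\delta-1}{2}}\mathcal{L}u\|_{L^2}$ and use $\|\nabla^2u\|_{L^2}\le CX+C\|\nabla u\|_{L^2}$. Your bound $C\|\nabla u\|_{L^2}^{3/2}\|\nabla^2u\|_{L^2}^{1/2}X$ then has leading part $C\|\nabla u\|_{L^2}^{3/2}X^{3/2}$. Absorbing $X^{3/2}$ requires Young's inequality with exponents $\frac43$ and $4$. This gives $\epsilon X^2+C\|\nabla u\|_{L^2}^{6}$, not $C\|\nabla u\|_{L^2}^{2}\|\nabla u\|_{L^2}^{2/3}$ as you claim.

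\begin{itemize}
\item The Gr\"onwall weight is therefore $\|\nabla u\|_{L^2}^4$. Lemma \ref{l52} only gives $\int_0^T\|\nabla u\|_{L^2}^2dt\le C$, so this weight is not known to be integrable.
\item The inequality is effectively $\Psi'\lesssim\Psi^3$, which yields a short-time bound only, not one up to $T^\ast$.
\item A better H\"older--Sobolev splitting will not help. By scaling, $\|\nabla u\|_{L^2}^{3/2}\|\nabla^2u\|_{L^2}^{3/2}$ is the only homogeneous bound of this trilinear term. Interpolating with the weighted $L^2$ bound on $u$ does no better than $\|\nabla u\|_{L^2}^6$.
\end{itemize}

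Your ``first try'' suggestions point in the right direction but are never carried out. You also speak of integrating by parts in $\rho^{1-\delta}$-weighted expressions, which suggests you missed that there is no weight at all.

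\textbf{The missing idea.} After multiplying by $\rho^{\delta-1}\mathcal{L}u$, the powers of $\rho$ cancel exactly. The convective contribution is $\int u\cdot\nabla u\cdot\mathcal{L}u\,dx$ with $\mathcal{L}u=(2\mu+\lambda)\nabla\div u-\mu\curl\curl u$, and one then integrates by parts as in Huang--Li.

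\begin{itemize}
\item \emph{The $\nabla\div u$ piece.} The curl identity is not the right tool here; integrate by parts directly:
$\int u\cdot\nabla u\cdot\nabla\div u\,dx=\int_{\partial\Omega}(u\cdot\nabla u\cdot n)\div u\,dS-\int\partial_ju_i\,\partial_iu_j\,\div u\,dx+\frac12\int(\div u)^3dx$.
\item \emph{The $\curl\curl u$ piece.} Move the curl onto $u\cdot\nabla u$. By $\curl u\times n=-Au$ this produces the boundary term $\int_{\partial\Omega}(u\cdot\nabla u)\cdot Au\,dS$. Then use $u\cdot\nabla u=\nabla\frac{|u|^2}{2}-u\times\curl u$ and the expansion of $\curl(u\times\curl u)$. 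The interior part reduces, up to signs, to $\int\curl u\cdot\mathcal{D}(u)\cdot\curl u\,dx$ and $\int|\curl u|^2\div u\,dx$.
\item \emph{Interior terms.} All of them are bounded by $C\|\mathcal{D}(u)\|_{L^\infty}\|\nabla u\|_{L^2}^2$.
\item \emph{Boundary terms.} Use $u\cdot\nabla u\cdot n=-u\cdot\nabla n\cdot u$ on $\partial\Omega$ (from $u\cdot n=0$) and the trace theorem. They are bounded by $C\||u|^2|\nabla u|\|_{W^{1,1}(\Omega_0)}\le C\|\nabla u\|_{L^2}^2\|\nabla u\|_{H^1}\le\epsilon\|\nabla^2u\|_{L^2}^2+C(1+\|\nabla u\|_{L^2}^2)\|\nabla u\|_{L^2}^2$.
\end{itemize}

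The last bound is where the integrable weight $\|\nabla u\|_{L^2}^2$ legitimately comes from. With this replacement, the rest of your argument closes as in the paper.
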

\begin{proof}
First, the standard  regularity estimate for the Lam\'e operator (see \cite{11} for instance) and Lemma \ref{l51} yield
\begin{equation}
\|\nabla^2 u\|_{L^2}\leq C\|\mathcal{L}u\|_{L^2}+C\|\nabla u\|_{L^2}\leq C\|\rho^\frac{\delta-1}{2}\mathcal{L}u\|_{L^2}+C\|\nabla u\|_{L^2}.\label{4.16}
\end{equation}

Next, multiplying $\eqref{9}_2$  by $-2\rho^{\delta-1}\mathcal{L}u$,  one obtains after integrating the resulting equation over $\Omega$  by parts that
\begin{equation}\label{4.9}
\begin{split}
&\frac{d}{dt}\left(\mu\|\curl u\|_{L^2}^2+(2\mu+\lambda)\|\mathrm{div}u\|_{L^2}^2+\mu\int_{\partial\Omega} u\cdot A\cdot udS\right)
+2\|\rho^\frac{\delta-1}{2}\mathcal{L}u\|_{L^2}^2\\
&\leq (2\mu+\lambda)\int u\cdot\nabla u\cdot\nabla \mathrm{div}u dx-\mu\int u\cdot\nabla u\cdot\mathrm{curl}\mathrm{curl}u dx\\
&\quad+C\int\left(|\nabla\rho^{\gamma-\frac{1+\delta}{2}}||\rho^\frac{\delta-1}{2}\mathcal{L}u|
+|\nabla\rho^\frac{\delta-1}{2}||\nabla u||\rho^\frac{\delta-1}{2}\mathcal{L}u|\right)dx.
\end{split}
\end{equation}
Similar as those in  \cite{14}, one gets after integration by parts that
\begin{equation}
\begin{split}\label{4.10}
\int u\cdot\nabla u\cdot\nabla \mathrm{div}u dx&=\int_{\partial\Omega} u\cdot\nabla u\cdot n\div udS-\int\nabla u:\nabla^\bot u \mathrm{div}u dx+\frac{1}{2}\int(\mathrm{div}u)^3dx\\
&\leq \int_{\partial\Omega} u\cdot\nabla n\cdot u\div udS+C\|\mathcal{D}(u)\|_{L^\infty}\|\nabla u\|_{L^2}^2\\
&\leq C\||u|^2|\div u|\|_{W^{1,1}(\Omega_0)}+C\|\mathcal{D}(u)\|_{L^\infty}\|\nabla u\|_{L^2}^2\\
&\leq C\|\nabla u\|_{L^2}^2\|\nabla u\|_{H^1}+C\|\mathcal{D}(u)\|_{L^\infty}\|\nabla u\|_{L^2}^2.
\end{split}
\end{equation}
Using the following  facts
$$u\times\mathrm{curl}u=\frac{1}{2}\nabla(|\nabla u|^2)-u\cdot\nabla u,$$ and $$\nabla\times(a\times b)=(b\cdot\nabla)a-(a\cdot\nabla)b+(\mathrm{div}b)a-(\mathrm{div}a)b,$$ 
we have
\begin{equation}
\begin{split}
&\int (u\cdot\nabla) u\cdot\mathrm{curl}\mathrm{curl}u dx\\
&=\int_{\partial\Omega} u\cdot\nabla u\cdot A\cdot udS-\int \mathrm{curl}u\cdot\nabla\times\big((u\cdot\nabla) u\big)dx\\
&\leq C\||u\cdot\nabla u||u|\|_{W^{1,1}(\Omega_0)}+\int \mathrm{curl}u\cdot\nabla\times(u\times \mathrm{curl}u)dx\\
&\leq C\||u|^2|\nabla u|\|_{W^{1,1}(\Omega_0)}+\int\mathrm{curl}u\cdot\mathcal{D}(u)\cdot\mathrm{curl} udx-\frac{1}{2}\int|\mathrm{curl}u|^2\mathrm{div}u dx\\
&\leq C\|\nabla u\|_{L^2}^2\|\nabla u\|_{H^1}+C\|\mathcal{D}(u)\|_{L^\infty}\|\nabla u\|_{L^2}^2,
\end{split}
\end{equation}
which together with \eqref{4.1} derives
	\begin{equation}
		\begin{split}\label{bgy}
&\int\left(|\nabla\rho^{\gamma-\frac{1+\delta}{2}}||\rho^\frac{\delta-1}{2}\mathcal{L}u|+|\nabla\rho^\frac{\delta-1}{2}||\nabla u||\rho^\frac{\delta-1}{2}\mathcal{L}u|\right)dx\\
			&\leq \epsilon\|\rho^\frac{\delta-1}{2}\mathcal{L}u\|_{L^2}^2+\epsilon\|\nabla^2u\|_{L^2}^2+C(\|\nabla u\|_{L^2}^2+\|\nabla\rho^{\gamma-\frac{1+\delta}{2}}\|_{L^2}^2).
		\end{split}
	\end{equation}
Substituting \eqref{4.10}--\eqref{bgy} into \eqref{4.9} and choosing $\varepsilon$ suitably small gives that
\begin{equation}
\begin{split}\label{qaqa55}
&\frac{d}{dt}\left(\mu\|\curl u\|_{L^2}^2
+(2\mu+\lambda)\|\mathrm{div}u\|_{L^2}^2+\mu\int_{\partial\Omega} u\cdot A\cdot udS\right)
+\frac{3}{2}\|\rho^\frac{\delta-1}{2}\mathcal{L}u\|_{L^2}^2\\
&\leq C\left(\|\mathcal{D}(u)\|_{L^\infty}+\|\nabla u\|_{L^2}^2+1\right)\|\nabla u\|_{L^2}^2+C\|\nabla u\|_{L^2}^2+C\|\nabla\rho^{\gamma-\frac{1+\delta}{2}}\|_{L^2}^2.
\end{split}
\end{equation}

Denoting $$\varphi\triangleq\rho^{\gamma-\frac{1+\delta}{2}},~~~~\zeta\triangleq\gamma-\frac{1+\delta}{2},$$ 
operating $\nabla$ to \eqref{310} and multiplying the resulting equation by $\nabla\varphi$ lead to
\begin{equation}
\begin{split}
\frac{1}{2}\partial_t|\nabla\varphi|^2=&-\nabla\varphi\cdot\nabla u\cdot\nabla\varphi
-u\cdot\nabla(|\nabla\varphi|^2)-\zeta|\nabla\varphi|^2\mathrm{div}u-\zeta\varphi\nabla\varphi\cdot\nabla \mathrm{div}u\\
=&-\nabla\varphi\cdot\mathcal{D}(u)\cdot\nabla\varphi-u\cdot\nabla(|\nabla\varphi|^2) \label{4.14}
-\zeta|\nabla\varphi|^2\mathrm{div}u-\zeta\varphi\nabla\varphi\cdot\nabla \mathrm{div}u.
\end{split}
\end{equation}
Integrating \eqref{4.14} over $\Omega$, one deduces from \eqref{qa76} that 
\begin{equation}
\frac{1}{2}\frac{d}{dt}\|\nabla\varphi\|_{L^2}^2\leq C\|\mathcal{D}(u)\|_{L^\infty}\|\nabla\varphi\|_{L^2}^2
+\epsilon\|\nabla^2 u\|_{L^2}^2+C(\epsilon)\|\nabla\varphi\|_{L^2}^2. \label{4.15}
\end{equation}

Finally, by viture of  \eqref{qaqa55}, \eqref{4.15}, and \eqref{4.16}, we obtain  after choosing $\varepsilon$ suitably small that 
\begin{equation}
\begin{split}\nonumber
&\frac{d}{dt}\left(\mu\|\curl u\|_{L^2}^2
+(2\mu+\lambda)\|\mathrm{div}u\|_{L^2}^2+\|\nabla\rho^{\gamma-\frac{1+\delta}{2}}\|_{L^2}^2+\mu\int_{\partial\Omega} u\cdot A\cdot udS\right)
+\|\rho^\frac{\delta-1}{2}\mathcal{L}u\|_{L^2}^2\\
&\leq C\big(\|\mathcal{D}(u)\|_{L^\infty}+\|\nabla u\|_{L^2}^2+1\big)\left(\|\nabla u\|_{L^2}^2+\|\nabla\rho^{\gamma-\frac{1+\delta}{2}}\|_{L^2}^2\right)+C\|\nabla u\|_{L^2}^2,
\end{split}
\end{equation}
which combined with Gr\"onwall's inequality, \eqref{4.1}, \eqref{trace14},  and \eqref{4.16} gives \eqref{4.77}, and finish the proof of Lemma \ref{l522}.
\end{proof}

\begin{lemma}\label{l53}
Under the condition \eqref{4.1}, it holds that
\begin{equation}\label{4.188}
\sup_{0\leq t\leq T}(\|\rho^\frac{1-\delta}{2}u_t\|_{L^2}^2+\|\nabla u\|_{H^1}^2)
+\int_0^T\|\nabla u_t\|_{L^2}^2dt\leq C.
\end{equation}
\end{lemma}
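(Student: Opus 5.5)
The plan is to redo the $u_t$-energy estimate of Lemma \ref{L3.3}, this time with coefficients controlled by the blow-up quantities in \eqref{4.1} and by the bounds of Lemmas \ref{l51}--\ref{l522}, and to close it with Gr\"onwall's inequality. Throughout I will use $\|\rho\|_{L^\infty}\le C$ from \eqref{qa76}, $\|\rho^{\frac{\delta-1}{2}}\|_{L^\infty(\Omega_0)}\le C$ from \eqref{rho_inf33}, and $\|\nabla\rho^{\frac{\delta-1}{2}}\|_{L^6\cap D^1}\le C$ from \eqref{4.1}. Lemma \ref{l522} gives $\sup_t\|\nabla u\|_{L^2}$ and $\int_0^T\|\nabla^2u\|_{L^2}^2\,dt$.

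\textbf{Step 1: control of $\nabla^2u$.} Apply Lemma \ref{elliptic_estimate} to \eqref{e38}:
\begin{equation*}
\|\nabla^2u\|_{L^2}\le C\bigl(\|\rho^{\frac{1-\delta}{2}}u_t\|_{L^2}+\|u\|_{L^6}\|\nabla u\|_{L^3}+\|\nabla\rho^{\gamma-\frac{1+\delta}{2}}\|_{L^2}+\|\nabla\rho^{\frac{\delta-1}{2}}\|_{L^6}\|\nabla u\|_{L^3}+\|\nabla u\|_{L^2}\bigr).
\end{equation*}
After Gagliardo--Nirenberg, $\|\nabla u\|_{L^3}\le C\|\nabla u\|_{L^2}^{1/2}\|\nabla u\|_{H^1}^{1/2}$, and absorbing the result gives
\begin{equation*}
\|\nabla^2u\|_{L^2}\le C\bigl(1+\|\rho^{\frac{1-\delta}{2}}u_t\|_{L^2}\bigr).
\end{equation*}
Consequently, bounding $\sup_t\|\rho^{\frac{1-\delta}{2}}u_t\|_{L^2}$ is enough to obtain the $H^1$ part of \eqref{4.188}.

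\textbf{Step 2: the $u_t$ energy estimate.} Differentiate $\eqref{9}_2$ in time to get \eqref{32}, test with $u_t$, and arrive at \eqref{34}, with terms $I_1$--$I_7$ and the boundary term $\mu\int_{\partial\Omega}u_t\cdot A\cdot u_t\,dS$.

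\begin{itemize}
\item The boundary term is handled exactly as in \eqref{trace3}, using \eqref{rho_inf33}. Together with Lemma \ref{l22}, as in \eqref{3.4aqa0}, this lets $\|\nabla u_t\|_{L^2}^2$ appear on the left.
\item $I_1$ through $I_4$ and $I_7$ are estimated as before, except that every appearance of $\mathcal{E}^\beta$ is replaced by a factor linear in $Y(t):=\|\rho^{\frac{1-\delta}{2}}u_t\|_{L^2}^2$. The coefficients involve $1+\|\nabla u\|_{L^\infty}$ or $1+\|\nabla^2u\|_{L^2}^2$.
\item In $I_1$ I keep $\|\nabla u\|_{L^\infty}$, which I expect is the only norm not bounded pointwise in time. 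It produces $\|\nabla u\|_{L^\infty}Y$ and is handled by Step 3.
\item For $I_5$, I write $(\rho^{\gamma-\delta})_t$ via \eqref{qa98}. Its $L^2$ norm is bounded by $C(\|u\|_{L^\infty}\|\nabla\rho^{\gamma-\frac{1+\delta}{2}}\|_{L^2}+\|\nabla u\|_{L^2})\le C(1+\|\nabla^2u\|_{L^2})$.
\item For $I_6$, I use \eqref{qaz4}. It requires $\|u\|_{L^\infty}\|\nabla^2\rho^{\frac{\delta-1}{2}}\|_{L^2}$, which \eqref{4.1} controls, plus $\|\nabla\mathrm{div}u\|_{L^2}$. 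It then remains to bound $\|(\nabla\log\rho)_t\|_{L^2}\|\nabla u\|_{L^3}\|u_t\|_{L^6}$ by $\epsilon\|\nabla u_t\|_{L^2}^2+C(1+\|\nabla^2u\|_{L^2}^2)(1+Y)$.
\end{itemize}

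Collecting everything, the target inequality is
\begin{equation*}
\frac{d}{dt}Y+\|\nabla u_t\|_{L^2}^2\le C\bigl(1+\|\nabla u\|_{L^\infty}+\|\nabla^2u\|_{L^2}^2\bigr)(1+Y).
\end{equation*}

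\textbf{Step 3: the main obstacle.} The difficulty is that \eqref{4.1} only controls $\|\mathcal{D}(u)\|_{L^1_tL^\infty}$, not $\|\nabla u\|_{L^\infty}$. My first attempt is to avoid $\|\nabla u\|_{L^\infty}$ entirely by putting $\nabla u$ in $L^3$ or $L^6$. For example, in $I_1$:
\begin{equation*}
\|\rho^{\frac{1-\delta}{2}}u_t\|_{L^3}\|\rho^{\frac{1-\delta}{2}}u_t\|_{L^2}^{\cdot}\|\nabla u\|_{L^3}\le \epsilon\|\nabla u_t\|_{L^2}^2+C\|\nabla u\|_{H^1}^2Y.
\end{equation*}
The coefficients then involve only $\|\nabla^2u\|_{L^2}^2$, which is integrable in time by Lemma \ref{l522}. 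If some term genuinely needs a sup-norm, the fallback is to split $\nabla u$ into $\mathcal{D}(u)$ plus the antisymmetric part, and to observe that in the quadratic form $\rho^{1-\delta}u_t\cdot\nabla u\cdot u_t$ only $\mathcal{D}(u)$ survives.

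With $\int_0^T(1+\|\mathcal{D}(u)\|_{L^\infty}+\|\nabla^2u\|_{L^2}^2)\,dt\le C$ established, Gr\"onwall's inequality closes the estimate. The initial value is bounded by \eqref{317} through the compatibility condition \eqref{118}. Finally, Step 1 converts the bound on $Y$ into $\sup_t\|\nabla u\|_{H^1}\le C$, which completes \eqref{4.188}.
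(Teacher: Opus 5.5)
Your proposal is correct and follows essentially the same route as the paper: the elliptic bound $\|\nabla^2u\|_{L^2}\le C(1+\|\rho^{\frac{1-\delta}{2}}u_t\|_{L^2})$ as in \eqref{4.22}, the time-differentiated energy estimate \eqref{lm99} with the boundary term controlled via \eqref{rho_inf33}, and Gr\"onwall using $\int_0^T\|\nabla^2u\|_{L^2}^2\,dt\le C$ from Lemma \ref{l522}, with the initial value supplied by the compatibility condition. As in the paper, your ``first attempt'' of putting $\nabla u$ in $L^3$ already suffices, so neither $\|\nabla u\|_{L^\infty}$ nor the $\mathcal{D}(u)$ fallback is needed. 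The only cosmetic difference is that the paper handles the $(\rho^{1-\delta})_t\,u\cdot\nabla u\cdot u_t$ term by integrating by parts via the mass equation, rather than bounding it through $\nabla\rho^{1-\delta}$ as in $I_3$.
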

\begin{proof}
First, it deduces from Lemma \ref{elliptic_estimate}, \eqref{e38}, \eqref{4.1}, \eqref{qa76}, \eqref{4.7}, \eqref{4.77}, and Gagliardo-Nirenberg inequality that
\begin{equation}
\begin{split}\label{4.22}
&\|\nabla^2 u\|_{L^2}\leq C\|\mathcal{L}u\|_{L^2}+\|\nabla u\|_{L^2}\\
&\leq C(\|\rho^{1-\delta}u_t\|_{L^2}+\|\rho^{1-\delta}u\cdot\nabla u\|_{L^2}
+\|\nabla\rho^{\gamma-\delta}\|_{L^2}+\|\nabla\log\rho\cdot S(u)\|_{L^2}+1)\\
&\leq C(\|\rho^\frac{1-\delta}{2}u_t\|_{L^2}+\|u\|_{L^6}\|\nabla u\|_{L^3}+\|\nabla\rho^{\gamma-\frac{1+\delta}{2}}\|_{L^2}
+\|\nabla\rho^\frac{\delta-1}{2}\|_{L^{6}}\|\nabla u\|_{L^{3}}+1)\\
&\leq \frac{1}{2}\|\nabla^2 u\|_{L^2}+C\|\rho^\frac{1-\delta}{2}u_t\|_{L^2}+C\\
&\leq C\|\rho^\frac{1-\delta}{2}u_t\|_{L^2}+C.
\end{split}
\end{equation}

Next, it follows from \eqref{qa98},  \eqref{qa981}, \eqref{4.1}, \eqref{qa76}, \eqref{4.7}, \eqref{4.77}, and Gagliardo-Nirenberg inequality that
\begin{gather}
\|(\rho^{\gamma-\delta})_t\|_{L^2}\leq\|\rho\|_{L^\infty}^{(1-\delta)/2}\|u\|_{L^\infty}\|\nabla\rho^{\gamma-\frac{1+\delta}{2}}\|_{L^2}+\|\rho\|_{L^\infty}^{\gamma-\delta}\|\nabla u\|_{L^2}\leq C\|\nabla^2 u\|_{L^2}^{1/2}+C,\label{qaz29}
\end{gather}
and
\begin{equation}
\begin{split}\label{qaz49}
\|\nabla(\log\rho)_t\|_{L^2}
&\leq C\bigl(\|\rho\|_{L^\infty}^{1-\delta}\|\nabla\rho^\frac{\delta-1}{2}\|_{L^{6}}^2\|u\|_{L^{6}}
+\|\rho\|_{L^\infty}^{(1-\delta)/2}\|\nabla^2\rho^\frac{\delta-1}{2}\|_{L^{2}}\|u\|_{L^\infty}\\
&\quad+\|\rho\|_{L^\infty}^{(1-\delta)/2}\|\nabla\rho^\frac{\delta-1}{2}\|_{L^{6}}\|\nabla u\|_{L^{3}}+\|\nabla\mathrm{div}u\|_{L^2}\bigr)\leq C\|\nabla^2 u\|_{L^2}+C.
\end{split}
\end{equation}

Then, following the similar estimates as \eqref{34}, one derives from \eqref{4.1}, \eqref{qa76}, \eqref{4.7}, and  \eqref{4.22}-\eqref{qaz49} that 
\begin{equation}
\begin{split}\label{lm99}
&\frac{1}{2}\frac{d}{dt}\|\rho^\frac{1-\delta}{2}u_t\|_{L^2}^2+\mu\|\mathrm{curl} u_t\|_{L^2}^2
+(2\mu+\lambda)\|\mathrm{div}u_t\|_{L^2}^2 +\mu\int_{\partial\Omega}u_t\cdot A\cdot u_tdS \\
&\leq C\int\Big(\rho^{1-\delta}|\nabla u||u_t|^2+\rho^{1-\delta}|\nabla^2 u||u|^2|u_t|+\rho^{1-\delta}|u||\nabla u|^2|u_t|
+\rho^{1-\delta}|u|^2|\nabla u||\nabla u_t|\\
&\quad+\rho^{1-\delta}|u||\nabla u_t||u_t|+|(\rho^{\gamma-\delta})_t||\mathrm{div}u_t|
+|(\nabla\log\rho)_t||\nabla u||u_t|+|\nabla\log\rho||\nabla u_t||u_t|\Big)dx\\
&\leq C\|\rho^\frac{1-\delta}{2}\|_{L^\infty}\|\nabla u\|_{L^3}\|\rho^\frac{1-\delta}{2}u_t\|_{L^2}\|u_t\|_{L^6}+
C\|\rho^{1-\delta}\|_{L^\infty}\|\nabla^2 u\|_{L^2}\|u\|_{L^6}^2\|u_t\|_{L^6}\\
&\quad+C\|\rho^{1-\delta}\|_{L^\infty}\|u\|_{L^6}\|\nabla u\|_{L^3}^2\|u_t\|_{L^6}
+C\|\rho^{1-\delta}\|_{L^\infty}\|u\|_{L^6}^2\|\nabla u\|_{L^6}\|\nabla u_t\|_{L^2}\\
&\quad+C\|\rho^\frac{3(1-\delta)}{4}\|_{L^\infty}\|u\|_{L^6}\|\nabla u_t\|_{L^2}\|\rho^\frac{1-\delta}{2}u_t\|_{L^2}^{1/2}\|u_t\|_{L^6}^{1/2}
+C\|(\rho^{\gamma-\delta})_t\|_{L^2}\|\mathrm{div}u_t\|_{L^2}\\
&\quad+C\|(\nabla\log\rho)_t\|_{L^2}\|\nabla u\|_{L^3}\|u_t\|_{L^6}\\
&\quad+C\|\rho^\frac{1-\delta}4\|_{L^\infty}\|\nabla\rho^\frac{\delta-1}{2}\|_{L^6}\|\nabla u_t\|_{L^2}\|\rho^\frac{1-\delta}{2}u_t\|_{L^2}^{1/2}\|u_t\|_{L^6}^{1/2}\\
&\leq \epsilon\|\nabla u_t\|_{L^2}^2+C(1+\|\nabla^2 u\|_{L^2}^2)\|\rho^\frac{1-\delta}{2}u_t\|_{L^2}^2.
\end{split}
\end{equation}
Using the trace theorem, the boundary term in \eqref{lm99} can be governed as
\begin{align}\label{trace31}
\int_{\partial\Omega}u_t\cdot A\cdot u_tdS\leq C\left\||u_t|^2\right\|_{W^{1,1}(\Omega_0)}  &\leq C\|\rho^\frac{1-\delta}{2}u_t\|_{L^2}^2+\epsilon\|\nabla u_t\|_{L^2}^2.
\end{align}

Choosing $\epsilon$ suitably small in \eqref{lm99},  Gr\"onwall's inequality combined with \eqref{trace31} yiedls
	\begin{equation}\nonumber
		\sup_{0\leq t\leq T}\|\rho^\frac{1-\delta}{2}u_t\|_{L^2}^2+\int_0^T\|\nabla u_t\|_{L^2}^2dt\leq C, \label{4.28}
	\end{equation}
	due to the fact that
	\begin{equation}\nonumber
		\rho_0^\frac{1-\delta}{2}u_t(x,0)=-\rho_0^\frac{1-\delta}{2}u_0\cdot\nabla u_0-\frac{\gamma a}{\gamma-\frac{1+\delta}{2}}\nabla\rho_0^{\gamma-\frac{1+\delta}{2}}
		+\frac{2\delta}{\delta-1}\nabla\rho_0^\frac{\delta-1}{2}\mathcal{S}(u_0)+g_2\in L^2.
	\end{equation}
	%which comes from the compatibility condition \eqref{118}. 
    
    Finally, it follows from \eqref{4.22} that
	\begin{equation}\nonumber
		\sup_{0\leq t\leq T}\|\nabla u\|_{H^1}\leq C. \label{4.30}
	\end{equation}
	%Thus, Lemma \ref{l53} follows from \eqref{4.28} and \eqref{4.30} immediately.
The proof of Lemma \ref{l53} is completed.
\end{proof}

\begin{lemma}\label{l54}
	Under the condition \eqref{4.1}, it holds that
	\begin{equation}\label{4.322}
		\sup_{0\leq t\leq T}\|\nabla\rho^{\gamma-\frac{1+\delta}{2}}\|_{L^6\cap D^{1,2}}+\int_0^T\|\nabla^3u\|_{L^2}^2 dt\leq C.
	\end{equation}
\end{lemma}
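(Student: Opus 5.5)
We prove Lemma \ref{l54} by coupling the transport estimates for $\varphi\triangleq\rho^{\gamma-\frac{1+\delta}{2}}$ with the elliptic estimate for $\nabla^3u$ from Lemma \ref{elliptic_estimate}, and closing with Gr\"onwall. From Lemmas \ref{l51}--\ref{l53} and \eqref{4.1} we may use
\[
\sup_{t\le T}\bigl(\|\rho\|_{L^\infty}+\|\nabla u\|_{H^1}+\|\rho^\frac{1-\delta}{2}u_t\|_{L^2}+\|\nabla\rho^\frac{\delta-1}{2}\|_{L^6\cap D^{1}}+\|\rho^\frac{\delta-1}{2}\|_{L^6(\Omega_0)}\bigr)\le C,
\]
together with $\int_0^T\|\nabla u_t\|_{L^2}^2dt\le C$.

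\textbf{Step 1 (estimate for $\nabla^3u$).} Apply Lemma \ref{elliptic_estimate} with $k=1$ to \eqref{e38}, exactly as in the derivation of \eqref{g3u}. We expand $\nabla$ of each term on the right of \eqref{e38}. In place of the generic bounds $\mathcal{E}^\beta$, we now use the uniform bounds above. The top-order density term is $\|\nabla^2\rho^{\gamma-\delta}\|_{L^2}$, which we split into $\|\rho^\frac{1-\delta}{2}\|_{L^\infty}\|\nabla^2\varphi\|_{L^2}$ plus a product of first derivatives bounded by $\|\nabla\varphi\|_{L^6}\|\nabla\rho^\frac{\delta-1}{2}\|_{L^3}$. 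The terms involving $\nabla\log\rho\cdot\mathcal{S}(u)$ only involve $\nabla\rho^\frac{\delta-1}{2}$ up to second order, and these are controlled by \eqref{4.1}. Interpolating $\|\nabla^2u\|_{L^3}$ and $\|\nabla u\|_{L^\infty}$ and absorbing $\frac12\|\nabla^3u\|_{L^2}$, we aim for
\[
\|\nabla^3u\|_{L^2}\le C\bigl(1+\|\nabla u_t\|_{L^2}+\|\nabla\varphi\|_{L^6}+\|\nabla^2\varphi\|_{L^2}\bigr).
\]

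\textbf{Step 2 (transport estimates for $\varphi$).} Repeat \eqref{e339} with $r=6$ and \eqref{e340}, using $\|\varphi\|_{L^\infty}\le C$ from \eqref{qa76}. Then $\|\nabla\mathrm{div}u\|_{L^6}\le C\|\nabla^2u\|_{H^1}$ and $\|\nabla^2u\|_{L^3}\le C\|\nabla^2 u\|_{H^1}$. Setting $Y(t)\triangleq\|\nabla\varphi\|_{L^6}+\|\nabla^2\varphi\|_{L^2}$, this gives
\[
Y'\le C\bigl(1+\|\nabla u\|_{L^\infty}\bigr)Y+C\|\nabla^3u\|_{L^2}+C.
\]
Substituting Step 1 yields
\[
Y'\le C\bigl(1+\|\nabla u\|_{L^\infty}+\|\nabla u_t\|_{L^2}\bigr)(1+Y).
\]

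\textbf{Step 3 (closing).} The main obstacle is $\|\nabla u\|_{L^\infty}$: \eqref{4.1} controls only $\mathcal{D}(u)$ in $L^1L^\infty$, not the full gradient. The first option is to bound it by $C\|\nabla u\|_{W^{1,4}}$ and estimate that norm through the $L^4$ version of Lemma \ref{elliptic_estimate}. The ingredients are $\|\rho^\frac{1-\delta}{2}u_t\|_{L^4}\le C\|\nabla u_t\|_{L^2}+C$, since $\rho$ is bounded and $u_t\in L^6$ with $L^2$ control near $\Omega_0$, and $\|\nabla\varphi\|_{L^4}\le C(1+Y)$. This gives a bound linear in $1+Y+\|\nabla u_t\|_{L^2}$. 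If a logarithmic form is needed instead, we would use the Beale--Kato--Majda type inequality as in \cite{14}. In either case we reach
\[
Y'\le C\bigl(1+\|\nabla u_t\|_{L^2}^2\bigr)(1+Y),
\]
and Gr\"onwall together with Lemma \ref{l53} bounds $\sup_{t\le T} Y$. The bound $\|\nabla\varphi\|_{L^2}$ already follows from Lemma \ref{l522}. Finally, Step 1 gives $\int_0^T\|\nabla^3u\|_{L^2}^2dt\le C$.
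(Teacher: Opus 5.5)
Your architecture (an elliptic bound for $\nabla^3u$, transport estimates for $\varphi$, then Gr\"onwall) matches the paper's, but there is a genuine gap: Step 3 does not close. The failure sits exactly at the obstacle you flag yourself.

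Under option (a) you obtain $\|\nabla u\|_{L^\infty}\le C(1+Y+\|\nabla u_t\|_{L^2})$. The term $\|\nabla u\|_{L^\infty}Y$ then contributes $CY^2$, and still $CY^{7/4}$ if you interpolate $\|\nabla\varphi\|_{L^4}\le\|\nabla\varphi\|_{L^2}^{1/4}\|\nabla\varphi\|_{L^6}^{3/4}$. A Riccati-type inequality bounds $Y$ only on a short, data-dependent interval. The blow-up argument needs a bound on $[0,T]$ for every $T<T^*$, so the linear inequality $Y'\le C(1+\|\nabla u_t\|_{L^2}^2)(1+Y)$ does not follow.

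Option (b) is only named. A BKM-type bound would at best give a coefficient like $\|\mathcal{D}(u)\|_{L^\infty}\log(e+Y+\|\nabla u_t\|_{L^2})$, would have to be proved in the exterior domain under Navier-slip conditions, and is again not the form you state. Notice that your final inequality contains no $\|\mathcal{D}(u)\|_{L^\infty}$ at all, although controlling the top-order transport coefficient is precisely what that term of \eqref{4.1} is for.

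There is a second superlinear term hidden in Step 2. Estimate \eqref{e340} contains $\|\nabla\varphi\|_{L^6}\|\nabla^2\varphi\|_{L^2}\|\nabla^2u\|_{L^3}$. After dividing by $\|\nabla^2\varphi\|_{L^2}$ this gives $CY(1+\|\nabla^3u\|_{L^2})$, not $C\|\nabla^3u\|_{L^2}$, and inserting Step 1 produces $Y^2$ again.

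The missing idea is that $\|\nabla u\|_{L^\infty}$ is never needed, because terms quadratic in derivatives of $\varphi$ see only the symmetric part of $\nabla u$. First, $\nabla\varphi\cdot\nabla u\cdot\nabla\varphi=\nabla\varphi\cdot\mathcal{D}(u)\cdot\nabla\varphi$; this is \eqref{4.14}. Second, $\sum_{i,j,k}\partial_iu_k\,\partial_{jk}\varphi\,\partial_{ij}\varphi=\mathrm{tr}\bigl(\nabla u\,(\nabla^2\varphi)^2\bigr)=\mathrm{tr}\bigl(\mathcal{D}(u)(\nabla^2\varphi)^2\bigr)$, since $(\nabla^2\varphi)^2$ is symmetric. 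Third, the convective terms integrate by parts into $\mathrm{div}\,u$ terms with no boundary contribution because $u\cdot n=0$, and $|\mathrm{div}\,u|\le C|\mathcal{D}(u)|$.

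The paper then decouples the two levels.
\begin{enumerate}
\item The $L^6$ elliptic estimate \eqref{4.39re}, $\|\nabla^2u\|_{L^6}\le C(1+\|\nabla u_t\|_{L^2}+\|\nabla\varphi\|_{L^6})$, makes the $\varphi\nabla\mathrm{div}u$ term linear. This gives $\frac{d}{dt}\|\nabla\varphi\|_{L^6}^2\le C(1+\|\mathcal{D}(u)\|_{L^\infty})\|\nabla\varphi\|_{L^6}^2+C(1+\|\nabla u_t\|_{L^2}^2)$, so $\|\nabla\varphi\|_{L^6}$ is bounded first.
\item Only then is $\|\nabla^2\varphi\|_{L^2}$ estimated, with $\|\nabla\varphi\|_{L^6}$ now a known constant. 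Here $\|\nabla^2u\|_{L^3}\le C\|\nabla^2u\|_{L^6}^{1/2}\le C(1+\|\nabla u_t\|_{L^2})^{1/2}$ and $\|\nabla^3u\|_{L^2}\le C(1+\|\nabla u_t\|_{L^2}+\|\nabla^2\varphi\|_{L^2})$.
\item Gr\"onwall closes with the weight $1+\|\mathcal{D}(u)\|_{L^\infty}$, which is integrable by \eqref{4.1}.
\end{enumerate}

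A smaller slip in Step 1: $\|\nabla\rho^\frac{\delta-1}{2}\|_{L^3}$ is not controlled. For the example data, $\nabla\rho_0^\frac{\delta-1}{2}\sim|x|^{\alpha(1-\delta)-1}$, which is not in $L^3$. Pair $\|\nabla\varphi\|_{L^3}\le C\|\nabla\varphi\|_{L^2\cap L^6}$ with $\|\nabla\rho^\frac{\delta-1}{2}\|_{L^6}$ instead.
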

\begin{proof}
Considering  the Lam\'e system \eqref{e38} with the boundary condition \eqref{ch1} and far filed behavior condition \eqref{7}, the standard $L^p$-estimates   combined with \eqref{4.1}, \eqref{qa76}, \eqref{4.77},  \eqref{4.188}, and  \eqref{4.22} yield  that
\begin{equation}
\begin{split}\label{4.39re}
\|\nabla^2u\|_{L^6}&\leq C\|\mathcal{L}u\|_{L^6}+C\|\nabla u\|_{L^2}\\
&\leq C(\|\rho\|_{L^\infty}^{1-\delta}\|u_t\|_{L^6}+\|\rho\|_{L^\infty}^{1-\delta}\|u\|_{L^\infty}\|\nabla u\|_{L^6} +\|\rho\|_{L^\infty}^\frac{1-\delta}{2}\|\nabla\rho^{\gamma-\frac{1+\delta}{2}}\|_{L^6}\\
&\quad+\|\rho\|_{L^\infty}^\frac{1-\delta}2\|\nabla\rho^\frac{\delta-1}{2}\|_{L^{6}}\|\nabla u\|_{L^\infty}+1)\\
&\leq C(1+\|\nabla u_t\|_{L^2}+\|\nabla\rho^{\gamma-\frac{1+\delta}{2}}\|_{L^6})+\frac{1}{2}\|\nabla^2u\|_{L^6}\\
&\leq C(1+\|\nabla u_t\|_{L^2}+\|\nabla\rho^{\gamma-\frac{1+\delta}{2}}\|_{L^6}).
\end{split}
\end{equation}

Next, multiply \eqref{4.14} by $3|\nabla\phi|^4$, one gets
\begin{equation}
\begin{split}\nonumber
\frac{1}{2}\partial_t|\nabla\varphi|^6=&-3|\nabla\phi|^4\nabla\varphi\cdot\mathcal{D}(u)\cdot\nabla\varphi
-u\cdot\nabla(|\nabla\varphi|^6)-3\zeta|\nabla\varphi|^6\mathrm{div}u-3|\nabla\phi|^4\zeta\varphi\nabla\varphi\cdot\nabla \mathrm{div}u.
\end{split}
\end{equation}
Integrating the above equality over $\Omega$ and using \eqref{4.39re}, it holds that
\begin{equation}\label{4.15qq}
\begin{split}
\frac{1}{2}\frac{d}{dt}\|\nabla\varphi\|_{L^6}^2&\leq C\|\mathcal{D}(u)\|_{L^\infty}\|\nabla\varphi\|_{L^6}^2
+C\|\phi\|_{L^\infty}\|\nabla\phi\|_{L^6}\|\nabla^2 u\|_{L^6}\\
&\leq C(1+\|\mathcal{D}(u)\|_{L^\infty})\|\nabla\varphi\|_{L^6}^2
+C\|\nabla u_t\|_{L^2}^2,
\end{split}
\end{equation}
which together with Gr\"onwall's inequality,  \eqref{4.1}, and \eqref{4.188} implies
$$
\sup_{0\leq t\leq T}\|\nabla\rho^{\gamma-\frac{1+\delta}{2}}\|_{L^6}\leq C.
$$

Then, similar to \eqref{4.39re}, the standard $L^2$-estimates combining with \eqref{4.1}, \eqref{qa76}, \eqref{4.77},  \eqref{4.188}, and \eqref{4.22} yield  that
\begin{equation}
\begin{split}\label{4.39zz}
&\|\nabla^3u\|_{L^2}\leq C\|\mathcal{L}u\|_{H^1}+C\|\nabla u\|_{L^2}\\
&\leq C(\|\rho^{1-\delta}\|_{L^\infty}\|\nabla\rho^\frac{\delta-1}{2}\|_{L^{6}}\|\rho^\frac{1-\delta}{2}u_t\|_{L^{3}}
+\|\rho^{1-\delta}\|_{L^\infty}\|\nabla u_t\|_{L^2}\\
&\quad+\|\rho^{1-\delta}\|_{L^\infty}^{3/2}\|\nabla\rho^\frac{\delta-1}{2}\|_{L^\infty}\|u\|_{L^6}\|\nabla u\|_{L^3}
+\|\rho^{1-\delta}\|_{L^\infty}\|\nabla u\|_{L^3}\|\nabla u\|_{L^6}\\
&\quad+\|\rho^{1-\delta}\|_{L^\infty}\|u\|_{L^\infty}\|\nabla^2u\|_{L^2} +\|\rho^{1-\delta}\|_{L^\infty}^{3/2}\|\nabla\rho^\frac{\delta-1}{2}\|_{L^\infty}\|\nabla\rho^{\gamma-\frac{1+\delta}{2}}\|_{L^2}\\
&\quad+\|\rho^{1-\delta}\|_{L^\infty}^{1/2}\|\nabla^2\rho^{\gamma-\frac{1+\delta}{2}}\|_{L^2}
+\|\rho^\frac{1-\delta}{2}\|_{L^\infty}\|\nabla^2\rho^\frac{\delta-1}{2}\|_{L^{2}}\|\mathcal{S}(u)\|_{L^{\infty}}\\
&\quad+\|\rho^{1-\delta}\|_{L^\infty}\|\nabla\rho^\frac{\delta-1}{2}\|_{L^{6}}^2\|\mathcal{S}(u)\|_{L^{6}}
+\|\rho^\frac{1-\delta}{2}\|_{L^\infty}\|\nabla\rho^\frac{\delta-1}{2}\|_{L^{6}}\|\nabla^2u\|_{L^{3}}+1)\\
&\leq C(1+\|\nabla u_t\|_{L^2}+\|\nabla^2\rho^{\gamma-\frac{1+\delta}{2}}\|_{L^2})+\frac{1}{2}\|\nabla^3u\|_{L^2}\\
&\leq C(1+\|\nabla u_t\|_{L^2}+\|\nabla^2\rho^{\gamma-\frac{1+\delta}{2}}\|_{L^2}).
\end{split}
\end{equation}

Finally, similar to \eqref{e340}, one gets from \eqref{4.1} and Lemmas \ref{l51}-\ref{l53} that
\begin{equation}
\begin{split}\nonumber
\frac{d}{dt}\|\nabla^2\varphi\|_{L^2}^2
\leq& C\int|\nabla^2\varphi\cdot\nabla u\cdot\nabla^2\varphi|+|\nabla^2\varphi|^2|\mathrm{div} u|
+|\nabla\varphi||\nabla^2\varphi||\nabla^2u|+|\varphi||\nabla^2\varphi||\nabla^3u|\Big)dx\\ \leq&C(\|\mathcal{D}(u)\|_{L^\infty}\|\nabla^2\varphi\|_{L^2}^2+\|\nabla\varphi\|_{L^6}\|\nabla^2\varphi\|_{L^2}\|\nabla^2u\|_{L^3}\\
&+\|\varphi\|_{L^\infty}\|\nabla^2\varphi\|_{L^2}\|\nabla^3u\|_{L^2})\\ \leq&C(\|\mathcal{D}(u)\|_{L^\infty}\|\nabla^2\varphi\|_{L^2}^2+\|\nabla^2\varphi\|_{L^2}\|\nabla^3u\|_{L^2}^{1/2}+\|\nabla^2\varphi\|_{L^2}\|\nabla^3u\|_{L^2})\\
\leq&C(\|\mathcal{D}(u)\|_{L^\infty}+1)\|\nabla^2\varphi\|_{L^2}^2+C\|\nabla u_t\|_{L^2}^2.
\end{split}
\end{equation}
This, together with Gr\"onwall's inequality, \eqref{4.1}, \eqref{4.188},  and \eqref{4.39zz} implies \eqref{4.322} and completes the proof of Lemma \ref{l54}.
\end{proof}

Now, it is enough to extend the strong solutions of $(\rho, u)$ beyond $t > T^\ast$. In fact, Lemmas \ref{l52}-\ref{l54} imply that  $(\rho,u)|_{t=T^\ast} =
\lim_{t\rightarrow T^\ast} (\rho, u)$ satisfies the conditions imposed on the initial data \eqref{118} at the time $t = T^\ast$ .
Furthermore, it holds
\begin{equation}\nonumber
	\mathcal{L}(u)|_{t=T^\ast}=\lim_{t\rightarrow T^\ast}\big(\rho^{1-\delta}u_t+\rho^{1-\delta}u\cdot\nabla u+\frac{\gamma a}{\gamma-\delta}\nabla\rho^{\gamma-\delta}
	-\delta\nabla\log\rho\cdot\mathcal{S}(u)\big)\triangleq\rho^\frac{1-\delta}{2}g|_{t=T^\ast}
\end{equation}
with $g|_{t=T^\ast}\in L^2$. Thus, $(\rho, u)|_{t=T^\ast}$  also satisfies  \eqref{118}. Therefore, we can take $(\rho, u)|_{t=T^\ast}$
as the initial data and apply the local existence theorem to extend the local strong solution beyond $T^\ast$. This contradicts the assumption on $T^\ast$ and thus prove Theorem \ref{T2}.

\section*{Conflict-of-interest statement}
All authors declare that they have no conflicts of interest.

\section*{Acknowledgments}
This work is partially supported by the National Natural Science Foundation of China
(No. 12371219), and the Double-Thousand Plan of Jiangxi Province (No. jxsq2023201115).
%and the Academic and Technical Leaders Training Plan of Jiangxi Province (20212BCJ23027).

\end{document}